\newtheorem{theorem}{Theorem}[section]
\newtheorem{lemma}[theorem]{Lemma}
\newtheorem{corollary}[theorem]{Corollary}
\newtheorem{proposition}[theorem]{Proposition}
\newtheorem{remark}[theorem]{Remark}
\numberwithin{equation}{section}
\title{On the Cauchy problem for the inhomogeneous nonlinear Schr\"{o}dinger equation with inverse-power potential}
\author{{\bf JinMyong An, JinMyong Kim$^*$ and OkByol Kim}\\
\footnotesize{Faculty of Mathematics, {\bf Kim Il Sung} University, Pyongyang, Democratic People's Republic of Korea}\\
\footnotesize{$^*$ Corresponding Author: jm.kim0211@ryongnamsan.edu.kp.}
}
\date{}
\begin{document}
\maketitle
\begin{abstract}
In this paper, we study the Cauchy problem for the inhomogeneous nonlinear Schr\"{o}dinger equation with inverse-power potential
\[iu_{t} +\Delta u-c|x|^{-a}u=\pm |x|^{-b} |u|^{\sigma } u,\;
\;(t,x)\in \mathbb R\times\mathbb R^{d},\]
where $d\in \mathbb N$, $c\in \mathbb R$, $a,b>0$ and $\sigma>0$.
First, we establish the local well-posedness in the fractional Sobolev spaces $H^s(\mathbb R^d)$ with $s\ge 0$ by using contraction mapping principle based on the Strichartz estimates in Sobolev-Lorentz spaces. Next, the global existence and blow-up of $H^1$-solution are investigated. Our results extend the known results in several directions.
\end{abstract}

\textit{2020 Mathematics Subject Classification.} 35Q55, 35A01, 35B44.

\textit{Key words and phrases.} Inhomogeneous nonlinear Schr\"{o}dinger equation, Inverse-power potential, Local well-posedness, Continuous dependence, Global existence, Blow-up.
\section{Introduction}

In this paper, we study the Cauchy problem for inhomogeneous nonlinear
Schr\"{o}dinger equation with inverse-power potential:
\begin{equation} \label{GrindEQ__1_1_}
\left\{\begin{array}{l} {iu_{t}+\Delta u-c|x|^{-a}u=\lambda|x|^{-b}
|u|^{\sigma } u,\;(t,x)\in\mathbb R\times\mathbb R^{d},}
\\ {u\left(0, x\right)=u_{0}(x),} \end{array}\right.
\end{equation}
where $u:\mathbb R\times \mathbb R^{d} \to \mathbb C$, $u_{0}:\mathbb R^{d} \to \mathbb C$, $d\in \mathbb N$, $c\in \mathbb R$, $0<a<2$,  $0<b<2$, $\sigma>0$ and $\lambda=\pm1$. The parameters $\lambda=1$ and $\lambda=-1$ corresponds to the defocusing and focusing cases respectively.

The equation \eqref{GrindEQ__1_1_} appears in a variety of physical settings, for example, in nonlinear optical systems with spatially dependent interactions (see e.g. \cite{BPVT07,BPST03,KSWW75,KMVBT17} and the references therein).
The case $c=b=0$ is the classic nonlinear Schr\"{o}dinger (NLS) equation which has been extensively studied over the last three decades (see e.g. \cite{C03,CFH11,DYC13, LP15, WHHG11} and the references therein).

When $b=0$ and $c\neq 0$, we have the NLS equation with inverse-power potential, which has also been widely studied in the past decades.
See e.g. \cite{D18J,KMVZZ18,OSY12,Y21} for the inverse-square potential $a=2$ and \cite{AHI24,D21A,FO19, GWY22,MZZ22,M20} for the slowly decaying potentials $0<a<2$.

Moreover, the inhomogeneous NLS equation without potential (i.e. \eqref{GrindEQ__1_1_} with $b>0$ and $c=0$) has also attracted a lot of interest in recent years.
We refer the reader to \cite{AT21, AK211, AK212,AK23,AK232,AKC22,GS08,G17} for the local well-posedness and small data global well-posedness in $H^s$ with $s\ge 0$ and \cite{AK232,BL23,CC21, CF22,D18N, DK21, F16,G12, GM21} for the global well-posedness and blow-up in the energy space $H^{1}$.

Recently, the inhomogeneous NLS equation with inverse-square potential, i.e. \eqref{GrindEQ__1_1_} with $a=2$ has also been investigated by \cite{AJK23,AMK24,BS23,BGS23,CCF23,CG21,JAK21,S16}.

As in the study of the NLS equation with potential, in this paper, we mainly focus on the local and global well-posedness as well as blow-up for inhomogeneous NLS equation with slowly decaying potential, i.e. \eqref{GrindEQ__1_1_} with $0<a,b<2$.

\subsection{Known results}

In this subsection, we recall the known results for the inhomogeneous NLS equation (i.e. \eqref{GrindEQ__1_1_} with $c=0$ and $b>0$) and the NLS equation with inverse-power potential (i.e. \eqref{GrindEQ__1_1_} with $b=0$ and $0<a<2$).

\subsubsection{Known results for \eqref{GrindEQ__1_1_} with $c=0$ and $b>0$}
Let us recall the known results for the inhomogeneous NLS equation without potential, namely,
\begin{equation} \label{GrindEQ__1_2_}
\left\{\begin{array}{l} {iu_{t}+\Delta u=\lambda |x|^{-b}
|u|^{\sigma } u,\;(t,x)\in\mathbb R\times\mathbb R^{d},}
\\ {u\left(0,x\right)=u_{0}(x),} \end{array}\right.
\end{equation}
where $d\in \mathbb N$, $0<b<\min\{2,d\}$, $\sigma>0$ and $\lambda=\pm1$.
The inhomogeneous NLS equation \eqref{GrindEQ__1_2_} is invariant under the scaling,
\begin{equation} \label{GrindEQ__1_3_}
u_{\mu}(t,x):=\mu^{\frac{2-b}{\mu}}u(\mu^{2}t,\mu x),~\mu>0.
\end{equation}
An direct computation shows that
$$
\left\|u_{\mu}(0)\right\|_{\dot{H}^{s}}=\mu^{s-\frac{d}{2}+\frac{2-b}{\mu}}\left\|u_0\right\|_{\dot{H}^{s}}.
$$
We thus define the critical exponents
\begin{equation}\label{GrindEQ__1_4_}
s_{\rm c}:=\frac{d}{2}-\frac{2-b}{\sigma}
\end{equation}
and
\begin{equation}\label{GrindEQ__1_5_}
\gamma_{\rm c}:=\frac{1-s_{\rm c}}{s_{\rm c}}=\frac{4-2b-(d-2)\sigma}{d\sigma-4+2b}.
\end{equation}
Putting
\begin{equation} \label{GrindEQ__1_6_}
\sigma_{\rm c}(s,b):=
\left\{\begin{array}{cl}
{\frac{4-2b}{d-2s},} ~&{{\rm if}~s<\frac{d}{2},}\\
{\infty,}~&{{\rm if}~s\ge \frac{d}{2},}
\end{array}\right.
\end{equation}
we can easily see that $s>s_{\rm c}$ is equivalent to $\sigma<\sigma_{\rm c}(s,b)$. If $s<\frac{d}{2}$, then $s=s_{\rm c}$ is equivalent to $\sigma=\sigma_{\rm c}(s,b)$.
For initial data $u_{0}\in H^{s}(\mathbb R^{d})$, we say that the Cauchy problem \eqref{GrindEQ__1_2_} is $H^{s}$-critical if $0\le s<\frac{d}{2}$ and $\sigma=\sigma_{\rm c}(s,b)$.
If $s\ge 0$ and $\sigma<\sigma_{\rm c}(s,b)$, then the problem \eqref{GrindEQ__1_2_} is said to be  $H^{s}$-subcritical.
Especially, if $\sigma =\sigma_{\rm c}(0,b)$, then the problem is also known as mass-critical.
If $\sigma =\sigma_{\rm c}(1,b)$ with $d\ge 3$, it is also called energy-critical.
If $\sigma_{\rm c}(0,b)<\sigma<\sigma_{\rm c}(1,b)$, it is called mass-supercritical and energy-subcritical (or intercritical).
The inhomogeneous NLS equation \eqref{GrindEQ__1_2_} has formally the  conservation of mass and energy, which are defined respectively by
\begin{equation} \label{GrindEQ__1_7_}
M(u(t)):=\int_{\mathbb R^{d}}{| u(t,x)|^{2}dx}=M(u_{0}),
\end{equation}
\begin{equation} \label{GrindEQ__1_8_}
E_{b}(u(t)):=\frac{1}{2}\int_{\mathbb R^{d}}{|\nabla u(t,x)|^2 dx}+\frac{\lambda
}{\sigma+2}\int_{\mathbb R^{d}}{|x|^{-b}\left|u(t,x)\right|^{\sigma+2}dx}=E_b(u_{0}).
\end{equation}
\smallskip

\textbf{1) $H^s$-subcritical case}

\smallskip

Using the energy method developed by \cite{C03}, Genoud-Stuart \cite{GS08} proved that \eqref{GrindEQ__1_2_} is locally well-posed in $H^1(\mathbb R^d)$ if $d\in \mathbb N$, $0<b<\min\{2,d\}$ and $0<\sigma<\sigma_{\rm c}(1,b)$.
However, this energy method cannot be applied to established the local well-posedness for \eqref{GrindEQ__1_2_} in the general Sobolev spaces $H^s$.
Later, Guzm\'{a}n \cite{G17} used the contraction mapping principle based on the Strichartz estimates in Sobolev spaces to prove that \eqref{GrindEQ__1_2_} is locally well-posedness in $H^s(\mathbb R^d)$ with $0\le s\le \min\{1,\frac{d}{2}\}$ if $d\in \mathbb N$, $0<b<\min\{2,\frac{d}{3}\}$ and $0<\sigma <\sigma _{\rm c}(s,b)$.
It was also proved that the above local $H^s$-solution is extended globally in time if $\sigma>\frac{4-2b}{d}$ and the initial data is sufficiently small.
The well-posedness results of \cite{G17} were later extended by \cite{AK211,AK212,AKC22}.
More precisely, the authors in \cite{AK211} proved that \eqref{GrindEQ__1_2_} is locally well-posed in $H^{s}(\mathbb R^d)$, if $0\le s<\min \{d,\frac{d}{2} +1\}$, $0<b<\min \{2,d-s,1+\frac{d-2s}{2}\}$, $0<\sigma <\sigma _{\rm c}(s,b)$ and the following regularity assumption for the nonlinear term is further satisfied\footnote[1]{For $s\in\mathbb R$, $\left\lceil s\right\rceil $ denotes the minimal integer which is larger than or equal to $s$.}:
\begin{equation} \label{GrindEQ__1_9_}
\sigma~\textrm{is an even integer, or}~ \sigma\ge \lceil s\rceil-1.
\end{equation}
It was also proved in \cite{AK212} that the above local $H^s$-solution is extended globally in time if $\sigma>\frac{4-2b}{d}$ and the initial data is sufficiently small. Furthermore, the authors in \cite{AKC22} obtained some standard continuous dependence result for \eqref{GrindEQ__1_2_} in $H^s$-subcritical case.

On the other hand, the global existence as well as blow-up of $H^1$-solution for the energy-subcritical problem \eqref{GrindEQ__1_2_} with $0<\sigma <\sigma _{\rm c}(1,b)$ have also been widely studied.

In the defocusing case $\lambda=1$, it is easily proved that any local $H^1$-solution to energy-subcritical problem \eqref{GrindEQ__1_2_} with $0<\sigma<\sigma_{\rm c}(1,b)$ is extended globally in time by using the conservation laws and the blow-up alternative.

The focusing problem \eqref{GrindEQ__1_2_} with $\lambda=-1$ were studied by \cite{CC21, CF22, D18N, DK21, F16, G12, GM21}.
In the mass-subcritical case $0<\sigma<\sigma_{\rm c}(0,b)$, it was proved in \cite{GS08} that focusing problem \eqref{GrindEQ__1_2_} with $d\in \mathbb N$ and $0<b<\min\{2,d\}$ is globally well-posed in $H^1$. The mass-critical case $\sigma=\sigma_{\rm c}(0,b)$ was studied in \cite{G12,D18N,CF22}.
More precisely, Genoud \cite{G12} proved that the focusing, mass-critical problem \eqref{GrindEQ__1_2_} with $d\in \mathbb N$ and $0<b<\min\{2,d\}$ is globally well-posed in $H^1$ if $\left\|u_0\right\|_{L^2}<\left\|Q\right\|_{L^2}$,
where $Q$ is the unique positive radial solution of the ground state equation
\begin{equation}\nonumber
\Delta Q-Q+|x|^{-b}|Q|^{\frac{4-2b}{d}}Q=0.
\end{equation}
It was also proved in \cite{D18N,CF22} that any $H^1$-solution for the focusing, mass-critical problem \eqref{GrindEQ__1_2_} with initial data $u_0\in H^1$ satisfying $E_b(u_0)<0$ blows up in finite time. The mass-supercritical and energy-subcritical case $\sigma_{\rm c}(0,b)<\sigma<\sigma_{\rm c}(1,b)$ was investigated in \cite{CC21, D18N, DK21, F16,GM21}.
Farah \cite{F16} showed the global existence for \eqref{GrindEQ__1_2_} with $d\in \mathbb N$, $0<b<\min\{2,d\}$ by assuming $u_0\in H^1$ and
\begin{equation}\label{GrindEQ__1_10_}
E_b(u_{0})[M(u_{0})]^{\gamma_{\rm c}}<E_b(Q)[M(Q)]^{\gamma_{\rm c}},
\end{equation}
\begin{equation}\label{GrindEQ__1_11_}
  \left\|\nabla u_{0}\right\|_{L^{2}}\left\|u_{0}\right\|_{L^{2}}^{\gamma_{\rm c}}
  <\left\|\nabla Q\right\|_{L^{2}}\left\|Q\right\|_{L^{2}}^{\gamma_{\rm c}},
  \end{equation}
where $Q$ is the unique positive radial solution to the elliptic equation
\begin{equation}\label{GrindEQ__1_12_}
\Delta Q-Q+|x|^{-b}|Q|^{\sigma}Q=0.
\end{equation}
He also proved the finite time blow-up for \eqref{GrindEQ__1_2_} with $u_0\in \Sigma:=H^1\cap L^{2}(|x|^2 dx)$ satisfying \eqref{GrindEQ__1_10_} and
\begin{equation}\label{GrindEQ__1_13_}
  \left\|\nabla u_{0}\right\|_{L^{2}}\left\|u_{0}\right\|_{L^{2}}^{\gamma_{\rm c}}
  >\left\|\nabla Q\right\|_{L^{2}}\left\|Q\right\|_{L^{2}}^{\gamma_{\rm c}}.
\end{equation}
The latter result was extended to the radial data in \cite{D18N}, and to the non-radial data in \cite{BL23,DK21}. Note that the uniqueness of positive radial solution to \eqref{GrindEQ__1_12_} was established in \cite{Y91,G11,T84}.
The long time dynamics for the focusing problem \eqref{GrindEQ__1_2_} with $\sigma_{\rm c}(0,b)<\sigma<\sigma_{\rm c}(1,b)$ and
\begin{equation}\nonumber
E_b(u_{0})[M(u_{0})]^{\gamma_{\rm c}}\ge E_b(Q)[M(Q)]^{\gamma_{\rm c}},
\end{equation}
were also investigated by \cite{CC21,DK21}.

\smallskip

\textbf{2) $H^s$-critical case}

\smallskip

Aloui-Tayachi \cite{AT21} developed a local well-posedness theory for \eqref{GrindEQ__1_2_} in both of $H^{s}$-subcritical case and $H^{s}$-critical case.
More precisely, they used the Sobolev-Lorentz spaces to prove that \eqref{GrindEQ__1_2_} is locally well-posed in $H^{s}$ if $d\in \mathbb N$, $0\le s\le 1$, $s<\frac{d}{2}$, $0<b<\min\{2,d-2s\}$ and $0<\sigma\le\frac{4-2b}{d-2s}$.
However, they didn't treat the case $1<s<\frac{d}{2}$ with $d\ge 3$. This case was later investigated by \cite{AK23}.
More precisely, they used the fractional Hardy inequality to prove that the $H^{s}$-critical problem \eqref{GrindEQ__1_2_} with $d\ge 3$, $1<s<\frac{d}{2}$, $0<b<1+\frac{d-2s}{2}$ and $\sigma =\sigma_{\rm c}(s,b)$ is locally well-posed in $H^{s}(\mathbb R^{d})$ if \eqref{GrindEQ__1_9_} and one of the conditions in the following system are further satisfied:
\begin{equation} \label{GrindEQ__1_14_}
\left\{\begin{array}{l}
{s\in \mathbb N,~d\ge 3~\textrm{and}~b<4s/d,}\\
{s\notin \mathbb N,~d\ge 4~\textrm{and}~b<6s/d-1,}\\
{s\notin \mathbb N,~d=3~\textrm{and}~b<1.}
\end{array}\right.
\end{equation}
However, one can easily see that the conditions on $b$ in the $H^{s}$-critical case obtained by \cite{AT21,AK23} are not so good as in the $H^{s}$-subcritical case obtained by \cite{AK211,AK212,AKC22}.
The local well-posedness results of \cite{AT21,AK23} for \eqref{GrindEQ__1_2_} in the $H^s$-critical case $\sigma=\sigma_{\rm c}(s,b)$ were finally extended in \cite{AK232} to the full range of $0\le s< \frac{d}{2}$ and $0<b<\min \{2,d-s,1+\frac{d-2s}{2}\}$. The authors in \cite{AK232} also showed the blow-up of the $H^1$-solution for the focusing, energy-critical problem \eqref{GrindEQ__1_2_} with initial data $u_0\in H^1$ satisfying

$\cdot$ $E_b(u_0)<0$ or

$\cdot$ $E_b(u_0)\ge 0$, $E_b(u_0)\le E_b(W_b)$ and $\left\|\nabla u_{0}\right\|_{L^{2}}>\left\|\nabla W_{b}\right\|_{L^{2}}$,\\
where
\begin{equation}\label{GrindEQ__1_15_}
W_{b}(x):=\frac{\left[\varepsilon(d-b)(d-2)\right]^{\frac{d-2}{4-2b}}}
{\left(\varepsilon+|x|^{2-b}\right)^{\frac{d-2}{2-b}}}.
\end{equation}
with $\varepsilon>0$. We also refer the reader to \cite{GM21} and the references therein for the global well-posedness and scattering for the focusing, energy-critical problem \eqref{GrindEQ__1_2_}.

\subsubsection{Known results for \eqref{GrindEQ__1_1_} with $c\neq 0$ and $b=0$}

Let us recall the known results for the NLS equation with inverse-power potential:
\begin{equation} \label{GrindEQ__1_16_}
\left\{\begin{array}{l} {iu_{t}+\Delta u-c|x|^{-a}u=\lambda |u|^{\sigma } u,\;(t,x)\in\mathbb R\times\mathbb R^{d},}
\\ {u\left(0, x\right)=u_{0}(x),} \end{array}\right.
\end{equation}
where $d\in \mathbb N$, $c\neq 0$, $0<a<2$, $\sigma>0$ and $\lambda=\pm1$.

The global-in-time Strichartz estimates for $e^{-itH_c}$ with $H_c=-\Delta +c|x|^{-\sigma}$, $c>0$ and $0<a<2$ were established in \cite{M20} in dimension $d\ge 3$. Miao-Zhang-Zheng \cite{MZZ22} studied the global well-posedness, finite time blow-up and scattering in the energy space $H^1(\mathbb R^3)$ for the NLS equation with coulomb potential, i.e. \eqref{GrindEQ__1_16_} with $a=1$.
Later, Dinh \cite{D21A} extend the results of \cite{MZZ22} to the NLS equation with a general class of inverse-power potentials (i.e. \eqref{GrindEQ__1_16_} with $0<a<2$) and higher dimensions.
More precisely, using the energy method developed by \cite{C03}, he showed that \eqref{GrindEQ__1_16_} is locally well-posed in $H^1(\mathbb R^d)$ if $d\in \mathbb N$, $0<a<\min\{2,d\}$ and $0<\sigma<\tilde{\sigma}_{\rm c}(1)$, where
\begin{equation} \label{GrindEQ__1_17_}
\tilde{\sigma}_{\rm c}(s):=\sigma_{\rm c}(s,0)=
\left\{\begin{array}{cl}
{\frac{4}{d-2s},} ~&{{\rm if}~s<\frac{d}{2},}\\
{\infty,}~&{{\rm if}~s\ge \frac{d}{2}.}
\end{array}\right.
\end{equation}
Using Strichartz estimates in Lorentz spaces (see Lemma \ref{lem 2.13.}), he also gave the alternative proof of the local well-posedness for \eqref{GrindEQ__1_16_} with $d\ge 3$, $0<\sigma<\frac{4}{d-2}$ and
\begin{equation} \label{GrindEQ__1_18_}
\left\{\begin{array}{cl}
{0<a<\frac{3}{2},} ~&{{\rm if}~d=3,}\\
{0<a<2,}~&{{\rm if}~d\ge4.}
\end{array}\right.
\end{equation}
Note that although the latter result is weaker than the former result on the validity of $d$ and $a$, it gives more information on the solution, for instance, one knows that the solutions to \eqref{GrindEQ__1_16_} satisfy $u\in L_{\textrm{loc}}^{p}((-T_{\textrm{min}},T_{\textrm{max}}),H_q^{1})$ for any admissible pair $(p,q)$.
He also proved that \eqref{GrindEQ__1_16_} with $d\ge 3$, $0<a<2$ and $\sigma=\frac{4}{d-2}$ is locally well-posed in $H^1$ by using the the global in time Strichartz estimates obtain by \cite{M20}.
Furthermore, he obtained some global well-posedness, finite time blow-up and scattering results in the energy space $H^1$ for \eqref{GrindEQ__1_2_} with $0<a<2$.
We also refer the reader to \cite{BPVT07, FO19, GWY22} for further results.
\subsection{Main results}
\subsubsection{Local well-posedness in $H^s$}

Using the energy method developed by \cite{C03}, we have the following local well-posedness in $H^1$ for \eqref{GrindEQ__1_1_} with $\sigma<\sigma_{\rm c}(1,b)$.

\begin{proposition}\label{prp 1.1.}
Let $d\in \mathbb N$, $c\in \mathbb R$, $\lambda=\pm 1$, $0<a,b<\min \{2,d\}$ and $0<\sigma<\sigma_{c}(1,b)$.
Then for any $u_0\in H^1$, there exist $T_{*},T^{*}\in (0,\infty]$ and a unique maximal solution
\begin{equation} \nonumber
u\in C((-T_{*} , T^{*}), H^{1})\cap  C^{1}((-T_{*} , T^{*}), H^{-1}),
\end{equation}
of \eqref{GrindEQ__1_1_}. If $T^{*}<\infty$ (resp. if $T_{*}<\infty$), then $\left\|u_{0}\right\|_{H^1}\rightarrow \infty$ as $t\uparrow T^{*}$ (resp. as $t\downarrow -T_{*}$).
Moreover, the following mass and energy of $u(t)$ are conserved:
\begin{equation} \label{GrindEQ__1_19_}
M(u(t)):=\int_{\mathbb R^{d}}{| u(t,x)|^{2}dx},
\end{equation}
\begin{equation} \label{GrindEQ__1_20_}
E_{b,c}(u(t)):=\int_{\mathbb R^{d}}{\left(\frac{1}{2}|\nabla u(t,x)|^2+\frac{c}{2}|x|^{-a}|u(t,x)|^{2}+\frac{\lambda
}{\sigma+2}|x|^{-b}\left|u(t,x)\right|^{\sigma+2}\right)dx}.
\end{equation}

\end{proposition}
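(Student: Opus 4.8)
The plan is to follow the energy method of Cazenave \cite{C03}, adapted to inverse-power potentials by Dinh \cite{D21A}, treating the potential through a self-adjoint Hamiltonian and the inhomogeneous nonlinearity as a conservative perturbation. First I would realize $H_c:=-\Delta+c|x|^{-a}$ as a self-adjoint operator on $L^2(\mathbb R^d)$. Since $0<a<\min\{2,d\}$, the weight $|x|^{-a}$ is infinitesimally form-bounded relative to $-\Delta$: splitting $|x|^{-a}=|x|^{-a}\mathbf 1_{\{|x|<\delta\}}+|x|^{-a}\mathbf 1_{\{|x|\ge\delta\}}$, the second term is bounded while the first has $L^{p}(\{|x|<\delta\})$-norm tending to $0$ as $\delta\to0$ for any $p<d/a$, and Hölder together with the Sobolev embedding $H^1\hookrightarrow L^{2p'}$ (admissible since $a<2$) converts it into an arbitrarily small fraction of $\|\nabla u\|_{L^2}^2$. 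By the KLMN theorem, $H_c$ is thus self-adjoint and bounded below for every $c\in\mathbb R$, its form domain is $H^1$, and for $K$ large the energy norm $(\|\nabla u\|_{L^2}^2+c\int|x|^{-a}|u|^2+K\|u\|_{L^2}^2)^{1/2}$ is equivalent to $\|u\|_{H^1}$. Hence $e^{-itH_c}$ is a unitary $C_0$-group on $L^2$ that extends to an isometry group for the energy norm on $H^1$ and to a group on $H^{-1}$.

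The heart of the argument is to verify that the nonlinearity $g(u):=\lambda|x|^{-b}|u|^\sigma u$ satisfies the hypotheses of Cazenave's abstract local existence theorem. I would show that $g\in C(H^1,H^{-1})$ is bounded on bounded sets, that it is conservative with primitive $G(u)=\frac{\lambda}{\sigma+2}\int|x|^{-b}|u|^{\sigma+2}$ (so $g=G'$) and gauge-invariant, $\mathrm{Im}\langle g(u),u\rangle=0$, which yields charge conservation, and that it obeys the local Lipschitz bound $\|g(u)-g(v)\|_{H^{-1}}\le C(M)\|u-v\|_{L^2}$ on balls $\|u\|_{H^1},\|v\|_{H^1}\le M$ that is needed for uniqueness. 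These $H^{-1}$ and Lipschitz estimates are obtained by testing against $\phi\in H^1$, using the pointwise inequality $\big||z_1|^\sigma z_1-|z_2|^\sigma z_2\big|\le C(|z_1|^\sigma+|z_2|^\sigma)|z_1-z_2|$, splitting $|x|^{-b}$ into its singular part on $\{|x|<1\}$ (which lies in $L^{r}$ for some $r<d/b$, possible since $b<d$) and its bounded part on $\{|x|\ge1\}$, and distributing the $L^p$-norms by Hölder. The energy-subcritical assumption $\sigma<\sigma_c(1,b)=\frac{4-2b}{d-2}$ is precisely what keeps every resulting Lebesgue exponent inside the admissible Sobolev range $[2,2^{*}]$, so each factor is controlled by $\|u\|_{H^1}$.

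With these verifications in place I would invoke Cazenave's abstract energy-method theorem to produce, for each $u_0\in H^1$, a unique maximal solution $u\in C((-T_*,T^*),H^1)\cap C^1((-T_*,T^*),H^{-1})$ of the Duhamel equation $u(t)=e^{-itH_c}u_0-i\lambda\int_0^t e^{-i(t-s)H_c}g(u(s))\,ds$, together with the blow-up alternative and the conservation of $M$ and $E_{b,c}$ (the latter following from the conservative and gauge-invariant structure, established first on a regularized approximation and then passed to the limit). The $C^1((-T_*,T^*),H^{-1})$ regularity is read off directly from $iu_t=-H_cu+g(u)$, since both $H_cu$ and $g(u)$ lie in $C(\cdot,H^{-1})$.

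The main obstacle I anticipate is the simultaneous handling of the two competing singularities: the potential weight $|x|^{-a}$ must be absorbed into the definition and spectral theory of $H_c$, while the nonlinear weight $|x|^{-b}$ must be controlled in the $H^{-1}$ and local-Lipschitz estimates, and both must be reconciled with the equivalence of the energy norm and $\|\cdot\|_{H^1}$ so that Cazenave's framework applies. Keeping all the Hölder exponents admissible over the full range $0<\sigma<\sigma_c(1,b)$ and $0<a,b<\min\{2,d\}$, and across every dimension $d\in\mathbb N$, is where the bookkeeping is most delicate.
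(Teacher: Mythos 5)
Your proposal is correct and follows the same route as the paper, which disposes of this proposition in two lines by citing Cazenave's abstract energy-method theorem (Theorem 4.3.1 and Example 3.2.11 of \cite{C03}) together with Appendix K of \cite{GS08}; your text essentially unpacks what those citations contain. The one organizational difference is in how the potential is handled: you absorb $c|x|^{-a}$ into a self-adjoint Hamiltonian $H_c$ via KLMN and then perturb by the nonlinearity alone, whereas the paper keeps the free group $e^{it\Delta}$ and treats $c|x|^{-a}u$ as a second conservative, gauge-invariant perturbation $g_1(u)=Vu$ with $V\in L^r+L^\infty$, $r>\max\{1,d/2\}$ --- exactly the situation of Cazenave's Example 3.2.11. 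Both work; the paper's version avoids having to re-establish the group properties of $e^{-itH_c}$ on $H^1$ and $H^{-1}$, while yours is closer to Dinh's treatment in \cite{D21A} and makes the form-boundedness explicit. One small correction: the local Lipschitz bound $\|g(u)-g(v)\|_{H^{-1}}\le C(M)\|u-v\|_{L^2}$ that you state is generally \emph{not} available over the full subcritical range (already for $b=0$ it fails once $\sigma>\tfrac{2}{d-2}$); Cazenave's hypothesis is the weaker $\|g(u)-g(v)\|_{L^{\rho'}}\le C(M)\|u-v\|_{L^{r}}$ for suitable $r,\rho\in[2,\tfrac{2d}{d-2})$, with uniqueness then recovered through Strichartz estimates. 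Your own H\"older bookkeeping delivers exactly this weaker form, so the argument goes through, but the exponent on the right-hand side should be left free rather than fixed at $2$.
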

\begin{proof}
Noticing that $0<a<\min\{2,d\}$, we can see that the potential $c|x|^{-a}$ belongs to $L^r(\mathbb R^d)+L^{\infty}(\mathbb R^d)$ for some $r>\max\{1,\frac{d}{2}\}$.
Hence, using Theorem 4.3.1, Example 3.2.11 of \cite{C03} and Appendix K of \cite{GS08}, we can get the desired result.
\end{proof}

Using the contraction mapping principle based on Strichartz estimates in Lorentz spaces, we also have the following local well-posedness in $H^s$ for \eqref{GrindEQ__1_1_}.
\begin{theorem}\label{thm 1.2.}
Let $d\in \mathbb N$, $c\in \mathbb R$, $0\le s<\frac{d}{2}$ , $0\le b<\min \{2, d-s,1+\frac{d}{2}-s \}$, $0<a<\min \{2, d-s,1+\frac{d}{2}-s \}$ and $0<\sigma\le \frac{4-2b}{d-2s}$. If $\sigma$ is not an even integer, assume further $\sigma>\left\lceil s\right\rceil-1$.
Then for any $u_{0}\in H^{s}(\mathbb R^{d}) $, there exist $T_{\max }=T_{\max }(u_0)>0$ and $T_{\min }=T_{\min }(u_0)>0$ such that \eqref{GrindEQ__1_1_} has a unique, maximal solution $u$ satisfying
\begin{equation} \label{GrindEQ__1_21_}
u\in C((-T_{\min },T_{\max } ),H^{s} )\cap L^{p}_{\rm loc} ((-T_{\min } ,T_{\max } ),H_{q,2}^{s} ),
\end{equation}
for any admissible pair $(p,q)$. Moreover, the above solution $u$ depends continuously on the initial data $u_{0}$ in the following sense. There exists $0<T<T_{\max } ,\, T_{\min } $ such that if $u_{0,n} \to u_{0} $ in $H^{s} (\mathbb R^{d})$ and if $u_{n} $ denotes the solution of \eqref{GrindEQ__1_1_} with the initial data $u_{0,n} $, then $0<T<T_{\max }(u_{0,n}),T_{\min } (u_{0,n})$ for all sufficiently large $n$ and $u_{n} \to u$ in $L^{p} ([-T,T],L^{q,2}(\mathbb R^{d}))$ as $n\to \infty $ for any admissible pair $(p,q)$. Furthermore, if $s>0$, then $u_{n} \to u$ in $L^{p} ([-T,T],H_{q,2}^s(\mathbb R^{d}))$ as $n\to \infty $ for all admissible pair $(p,q)$ and all $\varepsilon >0$. In particular, $u_{n} \to u$ in $C([-T,T],H^{s-\varepsilon})$ as $n\to \infty $ for all $\varepsilon >0$.
\end{theorem}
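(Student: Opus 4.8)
The plan is to solve \eqref{GrindEQ__1_1_} by the contraction mapping principle applied to the Duhamel operator associated with the \emph{free} Schr\"{o}dinger group, treating both the potential term $c|x|^{-a}u$ and the inhomogeneous nonlinearity $\lambda|x|^{-b}|u|^{\sigma}u$ as source terms. Concretely, I would study the map
\[
\Phi(u)(t) = e^{it\Delta}u_{0} - i\int_{0}^{t} e^{i(t-\tau)\Delta}\Big[c|x|^{-a}u(\tau) + \lambda|x|^{-b}|u(\tau)|^{\sigma} u(\tau)\Big]\,d\tau
\]
on a ball inside a finite intersection $\bigcap_{j} L^{p_{j}}([-T,T],H^{s}_{q_{j},2})$ of Sobolev--Lorentz Strichartz spaces, equipped with the \emph{weaker} metric measured without derivatives. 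Writing $S(s)$ for the Strichartz norm at regularity $s$ and $S(0)$ for the one at regularity $0$, I would impose $\|u\|_{S(s)}\le M$ and $\|u\|_{S(0)}\le M$ in the ball but use $d(u,v)=\|u-v\|_{S(0)}$ as the metric. As usual, boundedness of $\Phi$ is established in the full norm $S(s)$ while the contraction property is established only in the weaker metric $S(0)$; this asymmetry is forced by the limited smoothness of $u\mapsto|u|^{\sigma}u$ when $\sigma$ is small, since the difference bound $\big||u|^{\sigma}u-|v|^{\sigma}v\big|\lesssim(|u|^{\sigma}+|v|^{\sigma})|u-v|$ only closes at the derivative-free level.

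The heart of the argument is the pair of nonlinear estimates controlling
\[
\big\||x|^{-b}|u|^{\sigma}u\big\|_{L^{p'}([-T,T],\,H^{s}_{q',2})}
\quad\text{and}\quad
\big\||x|^{-a}u\big\|_{L^{p'}([-T,T],\,H^{s}_{q',2})}
\]
in terms of the $S(s)$-norm of $u$. To produce these I would invoke Lemma~\ref{lem 2.13.} (Strichartz estimates in Sobolev--Lorentz spaces) together with three ingredients: (i) the fact that the singular weights lie in the weak Lebesgue, hence Lorentz, spaces $|x|^{-b}\in L^{d/b,\infty}$ and $|x|^{-a}\in L^{d/a,\infty}$; (ii) H\"{o}lder's inequality in Lorentz spaces, which peels off these weights at the cost of shifting a Lebesgue exponent by $b/d$, respectively $a/d$; and (iii) the fractional Leibniz rule and the fractional chain rule, which distribute $|\nabla|^{s}$ among the weight, the factor $|u|^{\sigma}$, and $u$. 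The regularity hypothesis ``$\sigma>\lceil s\rceil-1$ unless $\sigma$ is an even integer'' is exactly what is needed to apply the chain rule to $F(u)=|u|^{\sigma}u$, whose derivative $F'(u)\sim|u|^{\sigma}$ must be H\"{o}lder continuous of the right order; note that for $0\le s\le 1$ one has $\lceil s\rceil-1\le 0<\sigma$, so the hypothesis is automatic and the estimate is direct, whereas for $s>1$ it becomes genuinely restrictive. The constraints $a,b<\min\{2,d-s,1+\tfrac{d}{2}-s\}$ are precisely those under which the Lorentz--H\"{o}lder exponents generated by (i)--(iii) can be matched to an admissible pair $(p,q)$ while keeping every exponent in the admissible range. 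Verifying this exponent bookkeeping is the main obstacle, and I expect it to split into the regimes $s\le 1$ and $s>1$, and within each into a case distinction according to whether $\sigma$ is an even integer.

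With the nonlinear estimates in hand, the standard scheme goes through. In the subcritical case $\sigma<\tfrac{4-2b}{d-2s}$ the Strichartz bounds on the source terms carry a positive power of $T$, so $\Phi$ is a contraction on the ball for $T=T(\|u_{0}\|_{H^{s}})$ small. In the critical case $\sigma=\tfrac{4-2b}{d-2s}$ that power of $T$ is absent; instead I would use that the inhomogeneous-controlling Strichartz norm of $e^{it\Delta}u_{0}$ on $[-T,T]$ tends to $0$ as $T\downarrow 0$ (by density, together with the inhomogeneous estimate) to close the contraction, at the price that $T$ now depends on $u_{0}$ itself and not merely on its norm. The Banach fixed point theorem yields a unique fixed point $u$ in the ball, and a standard continuation-plus-uniqueness argument upgrades this to the unique maximal solution \eqref{GrindEQ__1_21_} together with the blow-up alternative.

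Finally, for continuous dependence I would first prove convergence $u_{n}\to u$ in the weak metric $L^{p}([-T,T],L^{q,2})$ by running the difference estimate—which uses only the $S(0)$-contraction bound—on a common time interval, after checking from the fixed-point construction that $T<T_{\max}(u_{0,n}),T_{\min}(u_{0,n})$ for all large $n$. To obtain the stated convergence in the stronger spaces $L^{p}([-T,T],H^{s}_{q,2})$ up to an $\varepsilon$-loss, and hence in $C([-T,T],H^{s-\varepsilon})$, I would interpolate the uniform $S(s)$-bound on $u_{n}-u$ against its $S(0)$-convergence; the loss of $\varepsilon$ derivatives is exactly the standard cost of this interpolation and cannot in general be avoided at this level of regularity.
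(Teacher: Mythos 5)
Your proposal is correct and follows essentially the same route as the paper: a contraction for the free-group Duhamel operator (with both $c|x|^{-a}u$ and $\lambda|x|^{-b}|u|^{\sigma}u$ as sources) on a ball in an intersection of Sobolev--Lorentz Strichartz spaces equipped with the derivative-free metric, with the nonlinear estimates obtained from $|x|^{-b}\in L^{d/b,\infty}$, Lorentz--H\"older, and the fractional product/chain rules, the subcritical case closed by a positive power of $T$ and the critical case by the smallness of $\|e^{it\Delta}u_{0}\|$ in the Strichartz norm, and continuous dependence with $\varepsilon$-loss via interpolation (the paper's Lemma~\ref{lem 2.12.}). The only cosmetic difference is that the paper organizes the exponent bookkeeping by the cases $b>0$ versus $b=0$ in Lemma~\ref{lem 3.2.} rather than by $s\le 1$ versus $s>1$.
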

\begin{remark}\label{rem 1.3.}
\textnormal{Theorem \ref{thm 1.2.} extends the known results for \eqref{GrindEQ__1_1_} in the following directions.
\begin{itemize}
  \item When $b>0$, it extends the results of \cite{AT21,AK232} for \eqref{GrindEQ__1_1_} with $c=0$ to \eqref{GrindEQ__1_1_} with $c\in \mathbb R$.
  \item When $b>0$ and $c=0$, it improves the results of \cite{AT21} by extending the validity of $s$ and $b$. It also extends the result of  \cite{AK232} by giving the unified proof for both of $H^s$-subcritical case $\sigma<\frac{4-2b}{d-2s}$ and $H^s$-critical case $\sigma=\frac{4-2b}{d-2s}$.
  \item When $b=0$ and $c\neq 0$, it improves the result of \cite{D21A} (see Proposition 3.3 of \cite{D21A}) by extending the validity of $s$.
\end{itemize}}
\end{remark}

We also have the following standard continuous dependence result.
\begin{theorem}\label{thm 1.4.}
Let $d\in \mathbb N$, $c\in \mathbb R$, $0\le s<\frac{d}{2}$ , $0<a,b<\min \{2, d-s,1+\frac{d}{2}-s \}$ and $0<\sigma\le \frac{4-2b}{d-2s}$. If $\sigma$ is not an even integer, assume that
\begin{equation} \nonumber
\left\{\begin{array}{ll}
{\sigma>\left\lceil s\right\rceil-1,} ~&{{\rm for}~\sigma<\frac{4-2b}{d-2s},}\\
{\sigma\ge \left\lceil s\right\rceil,}~&{{\rm for}~\sigma=\frac{4-2b}{d-2s}.}
\end{array}\right.
\end{equation}
If $s<1$, in addition, suppose further that $\sigma>1$.
Then for any given $u_{0} \in H^{s} (\mathbb R^{d})$, the corresponding solution $u$ of \eqref{GrindEQ__1_1_} in Theorem \ref{thm 1.2.} depends continuously on the initial data $u_{0}$ in the following sense.
For any interval $[-S,T]\subset(-T_{\min }(u_{0}),T_{\max }(u_{0}))$, and every admissible pair $(p,q)$, if $u_{0,n} \to u_{0} $ in $H^{s} (\mathbb R^{d})$ and if $u_{n} $ denotes the solution of \eqref{GrindEQ__1_1_} with the initial data $u_{0,n} $, then $u_{n} \to u$ in $L^{p} ([-S,T],H_{q,2}^{s} (\mathbb R^{d} ))$ as $n\to \infty $.
In particular, $u_{n} \to u$ in $C([-S,T],H^{s})$.
\end{theorem}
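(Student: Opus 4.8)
The plan is to bootstrap the weak continuous dependence already contained in Theorem~\ref{thm 1.2.}---namely $u_n\to u$ in $L^p([-T,T],L^{q,2})$ for every admissible pair and $u_n\to u$ in $C([-T,T],H^{s-\varepsilon})$ on a short symmetric interval---up to convergence at the full regularity $H^s$, and then to propagate this across an arbitrary compact subinterval $[-S,T]$ of the maximal existence interval. The scheme is the one used for the potential-free equation in \cite{AKC22}: run a difference estimate at the $H^s$-level in the Strichartz norms in Sobolev--Lorentz spaces, and close it by absorption together with the lower-order convergence that is already available.

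First I would work on a short interval $I\ni 0$ and set $w_n:=u_n-u$. Writing the Duhamel formula for the difference with the free propagator, treating both the potential and the inhomogeneous nonlinearity as source terms,
\[
w_n(t)=e^{it\Delta}(u_{0,n}-u_0)-i\int_0^t e^{i(t-\tau)\Delta}\!\left[c|x|^{-a}w_n+\lambda|x|^{-b}\bigl(|u_n|^\sigma u_n-|u|^\sigma u\bigr)\right]d\tau,
\]
and applying the Strichartz estimates of Lemma~\ref{lem 2.13.} to the $H^s_{q,2}$ norms, the linear data term is dominated by $C\|u_{0,n}-u_0\|_{H^s}\to0$, while the linear potential term $c|x|^{-a}w_n$ is estimated exactly as in the proof of Theorem~\ref{thm 1.2.} (placing $|x|^{-a}\in L^{d/a,\infty}$) and contributes a factor that is small---from $|I|^{\theta'}$ by H\"older in time---so it may be absorbed into the left-hand side.

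The crux is the nonlinear difference $|x|^{-b}\bigl(|u_n|^\sigma u_n-|u|^\sigma u\bigr)$. Applying the fractional product and chain rules in the Lorentz setting (placing $|x|^{-b}\in L^{d/b,\infty}$ as in the local theory) to an $s$-th order derivative produces two families of terms. In the first family the $s$-derivative falls on $w_n$; these are bounded by $\|w_n\|_{L^p(I,H^s_{q,2})}$ times $\sigma$ bounded Strichartz factors of $u_n,u$, together with a small coefficient---coming from $|I|^{\theta}$ in the subcritical case $\sigma<\frac{4-2b}{d-2s}$, and from the smallness of the critical Strichartz norm of $u$ on a short subinterval in the critical case $\sigma=\frac{4-2b}{d-2s}$---so they are absorbed. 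In the second family the $s$-derivative falls on $u_n$ or $u$ while $w_n$ occurs undifferentiated; these are bounded by a derivative-free Lorentz norm $\|w_n\|_{L^{r}(I,L^{\rho,2})}$ times bounded factors, hence are $o(1)$ by the weak convergence of Theorem~\ref{thm 1.2.}. Collecting the estimates yields, on $I$,
\[
\|w_n\|_{L^p(I,H^s_{q,2})}\le C\|u_{0,n}-u_0\|_{H^s}+\tfrac12\|w_n\|_{L^p(I,H^s_{q,2})}+o(1),
\]
and likewise for the finitely many admissible pairs used in the fixed-point argument, so that $w_n\to0$ in $L^p(I,H^s_{q,2})$ and, by the $(\infty,2)$-endpoint, in $C(I,H^s)$. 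To reach the whole of $[-S,T]$, I would partition it into finitely many subintervals on each of which $u$ (hence $u_n$ for $n$ large) has small Strichartz norm---possible since $u$ lies in the Strichartz space over the compact $[-S,T]$---and iterate the above estimate, using at each step the $H^s$-convergence at the left endpoint (which plays the role of the new initial datum) established at the previous step.

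The main obstacle is the second family of nonlinear terms: extracting a full $s$-th derivative from $F(z)=|z|^\sigma z$ while keeping $w_n$ in a derivative-free norm requires $F$ to be smooth enough, and this is exactly what the extra hypotheses provide. The condition $\sigma>1$ for $s<1$ makes $F$ differentiable with $F'$ of the right H\"older class to run the difference estimate without loss, while the strengthening to $\sigma\ge\lceil s\rceil$ in the critical case guarantees that the chain rule survives at the borderline scaling; the inverse-power weights $|x|^{-a},|x|^{-b}$ force every H\"older splitting to be carried out in Lorentz rather than Lebesgue spaces, which is the reason the whole argument is set in the Sobolev--Lorentz scale.
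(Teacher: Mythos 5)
Your proposal is correct and follows essentially the same route as the paper: a Duhamel difference estimate in Sobolev--Lorentz Strichartz norms, with the nonlinear difference split (via the fractional product/chain rules, the paper's Lemmas \ref{lem 4.1.}--\ref{lem 4.6.}) into a full-regularity term absorbed by smallness of $|I|^{\theta}$ (or of the critical Strichartz norm) and a derivative-free term killed by the weak convergence from Theorem \ref{thm 1.2.}, followed by iteration over a finite partition of $[-S,T]$. The only detail you gloss over, which the paper handles explicitly via the embedding $\dot{H}_{\tilde{q}_1,2}^{s-\varepsilon}\hookrightarrow L^{\tilde{r},2}$ and H\"older in time, is that the lower-order norm $L^{\hat{p}}(I,L^{\tilde{r},2})$ is not itself an admissible-pair norm and must be reached through the $H^{s-\varepsilon}$ convergence of Theorem \ref{thm 1.2.}.
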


\begin{remark}\label{rem 1.5.}
\textnormal{Theorem \ref{thm 1.4.} extends the standard continuous dependence results of \cite{AK232,AKC22} for $c=0$ to $c\in \mathbb R$.
  Furthermore, when $c=0$, it gives the unified proof for both of $H^s$-subcritical case $\sigma<\frac{4-2b}{d-2s}$ and $H^s$-critical case $\sigma=\frac{4-2b}{d-2s}$.}
\end{remark}

\subsubsection{Global existence and blow-up of $H^{1}$-solution}
Concerning the global existence of $H^{1}$-solution, we have the following result.
\begin{theorem}[Global existence]\label{thm 1.6.}
Let $d\in \mathbb N$, $c\in \mathbb R$, $\lambda=\pm1$, $0<a,b<\min \{2,d\}$ and $0<\sigma<\sigma_{c}(1)$.
If one of the following conditions is satisfied, then the corresponding solution $u$ of \eqref{GrindEQ__1_1_} with initial data $u_0\in H^1(\mathbb R^d)$ in Proposition \ref{prp 1.1.} is a global one.
\begin{enumerate}
  \item $\lambda=1$.
  \item $\lambda=-1$ and $\sigma<\frac{4-2b}{d}$.
  \item $\lambda=-1$, $\sigma=\frac{4-2b}{d}$ and $\left\|u_0\right\|_{L^2}<\left\|Q\right\|_{L^2}$.
  \item $\lambda=-1$, $\frac{4-2b}{d}<\sigma<\sigma_{\rm c}(1,b)$, $c\ge0$ and
       \begin{equation}\label{GrindEQ__1_22_}
       E_{b,c}(u_{0})[M(u_{0})]^{\gamma_{\rm c}}<E_{b}(Q)[M(Q)]^{\gamma_{\rm c}},~
       \left\|\nabla u_{0}\right\|_{L^{2}}\left\|u_{0}\right\|_{L^{2}}^{\gamma_{\rm c}}
        <\left\|\nabla Q\right\|_{L^{2}}\left\|Q\right\|_{L^{2}}^{\gamma_{\rm c}},
       \end{equation}
\end{enumerate}
where $Q$ is the unique positive radial solution to the elliptic equation \eqref{GrindEQ__1_12_} and $\gamma_{\rm c}$ is as in \eqref{GrindEQ__1_5_}.
\end{theorem}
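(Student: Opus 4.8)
The plan is to reduce every case, via the blow-up alternative in Proposition \ref{prp 1.1.} together with the conservation of mass \eqref{GrindEQ__1_19_}, to an a priori bound on $\|\nabla u(t)\|_{L^2}$ on the maximal interval: once $\|\nabla u(t)\|_{L^2}$ is controlled, so is $\|u(t)\|_{H^1}$ (the mass is constant), and the alternative forces $T_{\max}=T_{\min}=\infty$. Two ingredients feed all four cases. First, since $0<a<\min\{2,d\}$, the Hardy--Gagliardo--Nirenberg interpolation $\int|x|^{-a}|u|^2\,dx\le C\|\nabla u\|_{L^2}^{a}\|u\|_{L^2}^{2-a}$ holds, and because $a<2$ the gradient power is subquadratic, so Young's inequality yields, for every $\varepsilon>0$, a bound $\int|x|^{-a}|u|^2\,dx\le\varepsilon\|\nabla u\|_{L^2}^2+C_\varepsilon\|u\|_{L^2}^2$; this lets me absorb the potential term in \eqref{GrindEQ__1_20_} regardless of the sign of $c$. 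Second, I will use the sharp Gagliardo--Nirenberg inequality $\int|x|^{-b}|u|^{\sigma+2}\,dx\le C_{\mathrm{GN}}\|\nabla u\|_{L^2}^{(d\sigma+2b)/2}\|u\|_{L^2}^{((2-d)\sigma+4-2b)/2}$, whose optimal constant is attained by the ground state $Q$ of \eqref{GrindEQ__1_12_}.

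For the defocusing case (1) I would simply discard the nonnegative nonlinear term from the conserved energy \eqref{GrindEQ__1_20_} and absorb the potential term by the $\varepsilon$-bound, obtaining $\|\nabla u(t)\|_{L^2}^2\lesssim E_{b,c}(u_0)+\|u_0\|_{L^2}^2$. For the mass-subcritical case (2), the gradient power $(d\sigma+2b)/2$ in the sharp inequality is $<2$ precisely when $\sigma<\frac{4-2b}{d}$, so energy conservation gives $\tfrac12\|\nabla u\|_{L^2}^2\le E_{b,c}(u_0)+\tfrac{|c|}{2}\varepsilon\|\nabla u\|_{L^2}^2+C\|\nabla u\|_{L^2}^{(d\sigma+2b)/2}+C$; since both nonconstant powers of $\|\nabla u\|_{L^2}$ are subquadratic, absorption and Young close the bound. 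For the mass-critical case (3) the gradient power equals $2$ exactly; writing the sharp constant as $C_{\mathrm{GN}}=\frac{\sigma+2}{2}\|Q\|_{L^2}^{-\sigma}$ and using mass conservation, energy conservation yields $\tfrac12\big(1-(\|u_0\|_{L^2}/\|Q\|_{L^2})^\sigma\big)\|\nabla u\|_{L^2}^2\le E_{b,c}(u_0)+\tfrac{|c|}{2}\varepsilon\|\nabla u\|_{L^2}^2+C$, and the hypothesis $\|u_0\|_{L^2}<\|Q\|_{L^2}$ makes the bracket a fixed positive constant, so absorption again gives the bound.

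The delicate case is (4), the mass-supercritical and energy-subcritical regime, where the gradient power exceeds $2$ and energy conservation alone is powerless; here I would follow the Holmer--Roudenko/Farah coercivity argument, adapted to the potential, and the assumption $c\ge0$ is exactly what makes this work. Since $c\ge0$ we have $E_{b,c}(v)\ge E_b(v)$ for all $v$, where $E_b$ is the potential-free energy \eqref{GrindEQ__1_8_}, so both inequalities in \eqref{GrindEQ__1_22_} transfer to the $E_b$-quantities. Setting $\psi(t):=\|\nabla u(t)\|_{L^2}\|u(t)\|_{L^2}^{\gamma_{\rm c}}=\|\nabla u(t)\|_{L^2}\|u_0\|_{L^2}^{\gamma_{\rm c}}$ by mass conservation, the sharp inequality and the exponent identities forced by $\gamma_{\rm c}=(1-s_{\rm c})/s_{\rm c}$ give $E_b(u)M(u)^{\gamma_{\rm c}}\ge f(\psi)$, where $f(x)=\tfrac12 x^2-\frac{C_{\mathrm{GN}}}{\sigma+2}x^{A}$ with $A=2+\sigma s_{\rm c}>2$. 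The Pohozaev identities for \eqref{GrindEQ__1_12_} identify the unique maximum of $f$: it is attained at $x_\ast=\|\nabla Q\|_{L^2}\|Q\|_{L^2}^{\gamma_{\rm c}}$ with $f(x_\ast)=E_b(Q)M(Q)^{\gamma_{\rm c}}$. Then \eqref{GrindEQ__1_22_}, together with $c\ge0$ and conservation of $E_{b,c}$ and $M$, gives $f(\psi(t))\le E_b(u(t))M(u(t))^{\gamma_{\rm c}}\le E_{b,c}(u_0)M(u_0)^{\gamma_{\rm c}}<f(x_\ast)$ for all $t$, while the second condition in \eqref{GrindEQ__1_22_} gives $\psi(0)<x_\ast$; since $\psi$ is continuous and $f(\psi(t))<f(x_\ast)=\max f$ throughout, a standard continuity argument prevents $\psi(t)$ from ever reaching $x_\ast$, so $\psi(t)<x_\ast$ for all $t$, bounding $\|\nabla u(t)\|_{L^2}$ uniformly. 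The main obstacle is precisely this last case: one must establish the sharp-constant/ground-state correspondence (the identification $\max f=E_b(Q)M(Q)^{\gamma_{\rm c}}$ at $x_\ast$ through the Pohozaev relations) and verify that the comparison $E_{b,c}\ge E_b$ furnished by $c\ge0$ is strong enough to propagate the strict subcritical threshold along the flow; the remaining cases reduce to routine energy and Gagliardo--Nirenberg absorption once the potential term is tamed.
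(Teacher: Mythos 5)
Your proposal is correct and follows essentially the same route as the paper: the blow-up alternative plus mass conservation reduce everything to an a priori bound on $\left\|\nabla u(t)\right\|_{L^2}$, the potential term is absorbed via the Hardy--Gagliardo--Nirenberg interpolation and Young's inequality, cases (1)--(3) close by energy conservation and the sharp constant \eqref{GrindEQ__2_7_}, and case (4) uses exactly the paper's coercivity function $f(x)=\tfrac12 x^2-\tfrac{C_{\rm GN}}{\sigma+2}x^{(d\sigma+2b)/2}$ together with the Pohozaev identities \eqref{GrindEQ__2_6_}, \eqref{GrindEQ__2_8_} and a continuity argument. The only cosmetic difference is that you phrase the $c\ge 0$ hypothesis in case (4) as the comparison $E_{b,c}\ge E_b$, whereas the paper simply drops the nonnegative potential term in the displayed estimate \eqref{GrindEQ__5_8_}; these are the same step.
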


In the focusing case $\lambda=-1$, we also have the following blow-up result.
\begin{theorem}[Blow-up]\label{thm 1.7.}
Let $d\in \mathbb N$, $c\ge0$, $\lambda=-1$, $0<a,b<\min \{2,d\}$ and $\frac{4-2b}{d}\le \sigma\le \sigma_{c}(1,b)$ with $\sigma<\infty$.
Let $u$ be the solution to \eqref{GrindEQ__1_1_} defined on the maximal forward time interval of existence $[0,T^{*})$.
\begin{enumerate}
  \item (Mass-critical case: $\sigma=\frac{4-2b}{d}$.) If $E_{b,c}(u_0)<0$, the corresponding solution $u$ blows up in finite time, i.e. $T^{*}<\infty$.
  \item (Intercritical case: $\frac{4-2b}{d}<\sigma<\sigma_{\rm c}(1,b)$.) If $E_{b,c}(u_0)<0$ or if not, we assume that
       \begin{equation}\label{GrindEQ__1_23_}
       E_{b,c}(u_{0})[M(u_{0})]^{\gamma_{\rm c}}<E_{b}(Q)[M(Q)]^{\gamma_{\rm c}},~
       \left\|\nabla u_{0}\right\|_{L^{2}}\left\|u_{0}\right\|_{L^{2}}^{\gamma_{\rm c}}
        >\left\|\nabla Q\right\|_{L^{2}}\left\|Q\right\|_{L^{2}}^{\gamma_{\rm c}}.
       \end{equation}
       Then the corresponding solution $u$ blows up in finite or infinite time, i.e. either $T^{*}<\infty$ or $T^{*}=\infty$ and there exists a time sequence $t_{n}\to \infty$ such that $\left\|\nabla u(t_{n})\right\|_{L^2}\to \infty$ as $n\to \infty$.
       Moreover, if we assume in addition that $\sigma<\frac{4}{d}$, then the corresponding solution $u$ blows up in finite time, i.e. $T^{*}<\infty$.
  \item (Energy-critical case: $\sigma=\frac{4-2b}{d-2}$ with $d\ge 3$.) If $E_{b,c}(u_0)<0$ or if not, we assume that
  \begin{equation}\label{GrindEQ__1_24_}
  E_{b,c}(u_{0})<E_{b}(W_{b}),~\left\|\nabla u_{0}\right\|_{L^2}>\left\|\nabla W_{b}\right\|_{L^2},
  \end{equation}
where $W_{b}$ is as in \eqref{GrindEQ__1_15_}. Then the corresponding solution $u$ blows up in finite or infinite time. Moreover, if we assume in addition that $b>\frac{4}{d}$, then the corresponding solution $u$ blows up in finite time.
\end{enumerate}
\end{theorem}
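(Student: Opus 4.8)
The plan is to run a Glassey-type convexity (virial) argument adapted to the inverse-power potential. For finite-variance data set $V(t)=\int_{\mathbb R^d}|x|^2|u(t,x)|^2\,dx$; for general $H^1$ data replace $|x|^2$ by a smooth truncation $\varphi_R$ with $\varphi_R(x)=|x|^2$ on $\{|x|\le R\}$, $\varphi_R$ bounded, and set $V_R(t)=\int_{\mathbb R^d}\varphi_R|u|^2\,dx$. First I would establish the virial identity. A direct multiplier computation using \eqref{GrindEQ__1_1_} and integration by parts (rigorous after regularizing the data, as in Proposition \ref{prp 1.1.}) gives, for finite-variance solutions,
\begin{equation}\nonumber
V''(t)=8\|\nabla u(t)\|_{L^2}^2-\frac{4(d\sigma+2b)}{\sigma+2}\int_{\mathbb R^d}|x|^{-b}|u|^{\sigma+2}\,dx+4ac\int_{\mathbb R^d}|x|^{-a}|u|^2\,dx,
\end{equation}
the last term coming from $-4\int_{\mathbb R^d}(x\cdot\nabla(c|x|^{-a}))|u|^2\,dx=4ac\int_{\mathbb R^d}|x|^{-a}|u|^2\,dx$.

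Next I would rewrite this through the conserved energy $E_{b,c}$ in \eqref{GrindEQ__1_20_}. Eliminating $\|\nabla u\|_{L^2}^2$ yields
\begin{equation}\nonumber
V''(t)=\big(8-2(d\sigma+2b)\big)\|\nabla u(t)\|_{L^2}^2+4(d\sigma+2b)\,E_{b,c}(u_0)+2c\big(2a-d\sigma-2b\big)\int_{\mathbb R^d}|x|^{-a}|u|^2\,dx.
\end{equation}
Because $0<a<2$ and $c\ge0$, the last term is non-positive in all three regimes. In the mass-critical case $\sigma=\frac{4-2b}{d}$ we have $d\sigma+2b=4$, so $V''(t)\le 16E_{b,c}(u_0)$; hence $E_{b,c}(u_0)<0$ forces $V''\le-\delta<0$ and the convexity lemma (a non-negative function with uniformly negative second derivative reaches $0$ in finite time) gives $T^*<\infty$. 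In the intercritical and energy-critical regimes $d\sigma+2b>4$, so the gradient coefficient is negative; if $E_{b,c}(u_0)<0$ the identity already gives $V''\le 4(d\sigma+2b)E_{b,c}(u_0)<0$, while under the threshold \eqref{GrindEQ__1_23_} (resp. \eqref{GrindEQ__1_24_}) I would use the sharp Gagliardo--Nirenberg inequality with optimizer $Q$ from \eqref{GrindEQ__1_12_} (resp. the sharp Sobolev inequality with optimizer $W_b$ from \eqref{GrindEQ__1_15_}) together with a continuity/bootstrap argument to show the threshold is preserved by the flow, i.e. $\|\nabla u(t)\|_{L^2}$ stays uniformly above the optimizer level, whence $V''(t)\le-\delta<0$. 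The hypothesis $c\ge0$ is used twice: it forces $E_b(u_0)\le E_{b,c}(u_0)$, so the homogeneous energy also lies below the variational threshold $E_b(Q)$ (resp. $E_b(W_b)$), and it supplies the favourable sign of the potential term above.

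Finally, the convexity argument with the global weight $|x|^2$ needs $u_0\in\Sigma$. For general $H^1$ data I would work with $V_R$, whose second derivative equals the right-hand side of the virial identity plus error terms supported in $\{|x|\gtrsim R\}$ and controlled by $\|\nabla u\|_{L^2(|x|\gtrsim R)}^2$ and by $\int_{|x|\gtrsim R}|x|^{-b}|u|^{\sigma+2}\,dx$. If $T^*=\infty$ and $\sup_t\|\nabla u(t)\|_{L^2}<\infty$, taking $R$ large makes the errors negligible and produces a contradiction with $V_R\ge0$, which is exactly the stated dichotomy ($T^*<\infty$, or $T^*=\infty$ with $\|\nabla u(t_n)\|_{L^2}\to\infty$). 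I expect the genuine difficulty to be upgrading this dichotomy to finite-time blow-up for non-radial, infinite-variance data: this is where the extra hypotheses $\sigma<\frac4d$ (intercritical) and $b>\frac4d$ (energy-critical) enter, since they render the localized nonlinear error $\int_{|x|\gtrsim R}|x|^{-b}|u|^{\sigma+2}\,dx$ absorbable uniformly in $t$ (via H\"older and Gagliardo--Nirenberg on the annulus together with mass conservation), so that $V_R''\le-\delta$ holds outright and the convexity argument closes with the finite-mass weight $\varphi_R$.
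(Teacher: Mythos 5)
Your overall strategy is the same as the paper's: a localized virial/convexity argument based on the identity $V''=8\|\nabla u\|_{L^2}^2+4ac\int|x|^{-a}|u|^2\,dx-\tfrac{4(d\sigma+2b)}{\sigma+2}\int|x|^{-b}|u|^{\sigma+2}\,dx$, rewritten through the conserved energy as $G(u)=4(d\sigma+2b)E_{b,c}(u)-2(d\sigma+2b-4)\|\nabla u\|_{L^2}^2$ plus a term with the favourable sign for $c\ge 0$, combined with a continuity argument off the sharp Gagliardo--Nirenberg (resp.\ Hardy--Sobolev) inequality to get $G(u(t))\le -c_1$ under \eqref{GrindEQ__1_23_} (resp.\ \eqref{GrindEQ__1_24_}). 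Your treatment of the finite-or-infinite dichotomy in Items 2 and 3, and your identification of $\sigma<\tfrac4d$ (equivalently $b>\tfrac4d$ in the energy-critical case) as exactly the condition $\tfrac{\sigma d}{2}<2$ that lets the localized error $CR^{-b}\|\nabla u\|_{L^2}^{\sigma d/2}$ be absorbed into the quadratic term $-2(d\sigma+2b-4)\|\nabla u\|_{L^2}^2$, are both correct; the paper carries this out by splitting into the regimes $\|\nabla u(t)\|_{L^2}^2\le\gamma$ and $\ge\gamma$ and using a lower bound $\|\nabla u(t)\|_{L^2}\ge c_2>0$, a detail your sketch glosses over but which is routine.

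The genuine gap is Item 1. There you only obtain finite-time blow-up for $u_0\in\Sigma$ via the untruncated weight $|x|^2$, and for general $H^1$ data your truncated argument yields only the dichotomy (``$T^*<\infty$ or the gradient is unbounded along a sequence''), whereas the theorem asserts unconditional finite-time blow-up for $H^1$ data. The mechanism that rescues Items 2--3 is unavailable here: in the mass-critical case $d\sigma+2b=4$, so the coefficient of $\|\nabla u\|_{L^2}^2$ in $G(u)$ vanishes and there is no quadratic negative term into which the localized error $CR^{-b}\|\nabla u(t)\|_{L^2}^{2-b}$ (which may grow in $t$) can be absorbed; the main term is only the constant $16E_{b,c}(u_0)$. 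The paper closes this by replacing $\varphi_R$ with the refined Cardoso--Farah weight $\phi_R$ built from the function $v$ in \eqref{GrindEQ__6_16_} (Lemma \ref{lem 6.4.}), whose construction exploits the precise structure of the mass-critical coefficient $(2-b)(2-\phi_R'')+(2d-2+b)(2-\phi_R'/r)$ multiplying $|x|^{-b}|u|^{\sigma+2}$ so that, for $R$ large, $\tfrac{d^2}{dt^2}V_{\phi_R}(t)\le 8E_{b,c}(u_0)<0$ uniformly in $t$. Without this (or some substitute such as an Ogawa--Tsutsumi argument restricted to radial data), your proof of Item 1 does not reach the stated conclusion.
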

\begin{remark}\label{rem 1.8.}
\textnormal{Theorems \ref{thm 1.6.} and \ref{thm 1.7.} extend global existence and blow up results of \cite{AK232,BL23,CF22,DK21,F16} for \eqref{GrindEQ__1_1_} with $c=0$ to $c\in \mathbb R$ or $c\ge 0$. Furthermore, when $c=0$ and $\sigma=\frac{4-2b}{d-2}$, Item 3 of Theorem \ref{thm 1.7.} improves the blow-up result of \cite{AK232}. In fact, the authors in \cite{AK232} only obtained the finite or infinite blow-up result for non-radial data (see Theorem 1.10 of \cite{AK232}). However, in Item 3 of Theorem \ref{thm 1.7.}, we also obtained the finite time blow-up result for non-radial data and $b>\frac{4}{d}$.}
\end{remark}
In addition, we have the following blow-up result for mass-critical inhomogeneous NLS equation with inverse-square potential, i.e. \eqref{GrindEQ__1_1_} with $a=2$.
\begin{theorem}\label{thm 1.9.}
Let $d\ge 3$, $a=2$, $0<b<2$, $\lambda=-1$, $\sigma=\frac{4-2b}{d}$ and $c>-\left(\frac{d-2}{2}\right)^2$. Let $u$ be the solution to \eqref{GrindEQ__1_1_} defined on the maximal forward time interval of existence $[0,T^{*})$. If $E_{b,c}(u_0)<0$, then the corresponding solution $u$ blows up in finite time. A similar statement holds for negative times.
\end{theorem}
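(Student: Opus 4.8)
The plan is to run Glassey's convexity (virial) argument adapted to the inverse-square potential. Assuming, as is standard for such criteria, that $u_0\in\Sigma:=H^1\cap L^2(|x|^2\,dx)$, the variance
\[
V(t):=\int_{\mathbb R^d}|x|^2|u(t,x)|^2\,dx
\]
is finite and, by the local theory of Proposition \ref{prp 1.1.}, stays finite on $[0,T^*)$. A first integration by parts using the equation gives the Morawetz action $V'(t)=4\,\mathrm{Im}\int_{\mathbb R^d}\bar u\,(x\cdot\nabla u)\,dx$; differentiating once more and substituting $iu_t=-\Delta u+c|x|^{-2}u-|x|^{-b}|u|^\sigma u$ yields
\[
V''(t)=8\|\nabla u\|_{L^2}^2+8c\int_{\mathbb R^d}|x|^{-2}|u|^2\,dx-\frac{4(d\sigma+2b)}{\sigma+2}\int_{\mathbb R^d}|x|^{-b}|u|^{\sigma+2}\,dx.
\]
The decisive point is the coefficient $8c$ of the potential term: it arises from $x\cdot\nabla\big(|x|^{-2}\big)=-2|x|^{-2}$, i.e. from the fact that the inverse-square potential is homogeneous of degree $-2$ and hence scales exactly like $-\Delta$. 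This is precisely why the argument is confined to $a=2$: for $0<a<2$ the potential would contribute a term proportional to $\int|x|^{-a}|u|^2$ with a coefficient that cannot be absorbed into $16E_{b,c}$.

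Next I would compare $V''$ with the conserved energy. Recalling that, for $a=2$ and $\lambda=-1$,
\[
16E_{b,c}(u)=8\|\nabla u\|_{L^2}^2+8c\int_{\mathbb R^d}|x|^{-2}|u|^2\,dx-\frac{16}{\sigma+2}\int_{\mathbb R^d}|x|^{-b}|u|^{\sigma+2}\,dx,
\]
the identity above rewrites, upon using energy conservation $E_{b,c}(u(t))=E_{b,c}(u_0)$, as
\[
V''(t)=16E_{b,c}(u_0)-\frac{4(d\sigma+2b-4)}{\sigma+2}\int_{\mathbb R^d}|x|^{-b}|u|^{\sigma+2}\,dx.
\]
In the mass-critical case $\sigma=\frac{4-2b}{d}$ one has $d\sigma+2b=4$, so the last integral drops out entirely and we are left with the exact relation $V''(t)=16E_{b,c}(u_0)$.

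The conclusion is then immediate from convexity. Since $E_{b,c}(u_0)<0$, the variance satisfies $V''\equiv 16E_{b,c}(u_0)<0$, hence
\[
V(t)=V(0)+V'(0)\,t+8E_{b,c}(u_0)\,t^2
\]
is a downward parabola. As $V(t)\ge 0$ throughout the interval of existence, the solution cannot be global: there is a finite time beyond which the identity forces $V(t)<0$, a contradiction. By the blow-up alternative of Proposition \ref{prp 1.1.}, $\|\nabla u(t)\|_{L^2}\to\infty$, so $T^*<\infty$. The statement for negative times follows by applying the same argument to $\tilde u(t,x):=\overline{u(-t,x)}$, which solves the same equation.

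The main obstacle is not the algebra but the rigorous justification of the two differentiations defining $V''$: the potential $c|x|^{-2}$ and the weight $|x|^{-b}$ are singular at the origin, so the formal manipulations must be legitimized by a regularization/approximation procedure (truncating the singularities, or approximating $u_0$ by smooth compactly supported data and passing to the limit via the $\Sigma$-continuity of the flow). Here the hypothesis $c>-\left(\frac{d-2}{2}\right)^2$ enters through Hardy's inequality, guaranteeing that $\|\nabla u\|_{L^2}^2+c\int|x|^{-2}|u|^2\,dx$ is nonnegative and norm-equivalent to $\|\nabla u\|_{L^2}^2$, so that $E_{b,c}$ is well defined on the energy space and the flow is well behaved. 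Controlling the finiteness and regularity of $V$ along the flow is the only delicate part; once the virial identity is secured, the blow-up follows mechanically.
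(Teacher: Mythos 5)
Your virial algebra is correct: with $\omega=|x|^2$, $a=2$ and $\sigma=\frac{4-2b}{d}$ the coefficients conspire so that $V''(t)=16E_{b,c}(u_0)$ exactly, and the convexity argument then forces $T^*<\infty$. The problem is the very first line: you assume $u_0\in\Sigma=H^1\cap L^2(|x|^2\,dx)$ so that the true variance is finite. That hypothesis is not in the statement of Theorem \ref{thm 1.9.}, and it is precisely what the theorem is designed to remove. As Remark \ref{rem 1.10.} explains, finite-time blow-up under $E_{b,c}(u_0)<0$ was already known for radial or finite-variance data (Campos--Guzm\'an), and the contribution here is to prove it for \emph{general} $H^1$ data, non-radial and of possibly infinite variance. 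Under your standing assumption the argument proves only the previously known special case, so there is a genuine gap relative to the claimed result. (Your observation that the $8c\int|x|^{-2}|u|^2$ term is absorbed into $16E_{b,c}$ only when $a=2$ is correct and is indeed why Theorem \ref{thm 1.9.} allows $c$ of either sign, whereas the slowly decaying case in Theorem \ref{thm 1.7.} needs $c\ge 0$.)

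The paper closes this gap by replacing $|x|^2$ with the truncated weight $\phi_R$ of Cardoso--Farah, defined in \eqref{GrindEQ__6_16_}--\eqref{GrindEQ__6_17_}: one takes $v(r)=2r$ for $r\le 1$, bends it down with a high-order polynomial correction $2r-2(r-1)^k$, and cuts off at $r=2$. The localized functional $V_{\phi_R}(t)$ is finite for any $H^1$ solution, and Lemma \ref{lem 6.4.} shows that in the mass-critical case all the error terms $K_1,\dots,K_4$ generated by the cutoff are either nonpositive (because $2-\phi_R''\ge 0$, $2-\phi_R'/r\ge 0$ and the coefficients in $K_1,K_2$ have the right signs for this specific $v$) or of size $O(R^{-a})$, so that for $R$ large one still gets the clean bound $\frac{d^2}{dt^2}V_{\phi_R}(t)\le 8E_{b,c}(u_0)<0$ uniformly in $t$. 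Since $V_{\phi_R}\ge 0$, integrating twice gives a contradiction with global existence without ever invoking spatial decay of $u_0$. If you want to keep your route you must either add $u_0\in\Sigma$ to the hypotheses (weakening the theorem) or carry out this localization; the choice of the particular weight \eqref{GrindEQ__6_16_} is the nontrivial ingredient, since a generic cutoff such as \eqref{GrindEQ__6_3_} produces a remainder $CR^{-b}\|\nabla u(t)\|_{L^2}^{\sigma d/2}$ (as in Lemma \ref{lem 6.3.}) that cannot be controlled without an a priori bound on $\|\nabla u(t)\|_{L^2}$.
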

\begin{remark}\label{rem 1.10.}
\textnormal{Theorem \ref{thm 1.9.} improves the blow-up results of \cite{AMK24, CG21} for mass-critical inhomogeneous NLS equation with inverse-square potential. In fact, when $E_{b,c}(u_0)<0$, Campos-Guzm\'{a}n \cite{CG21} proved the finite time blow-up only for radial or finite variance data (see Theorem 1.2 of \cite{CG21}). Later, the authors in \cite{AMK24} showed the finite or infinite time blow-up for non-radial data (see Proposition 1.5 of \cite{AMK24}).}
\end{remark}
This paper is organized as follows. In Section \ref{sec 2.}, we recall some notation and give some preliminary results related to our problem. In Section \ref{sec 3.}, we prove Theorem \ref{thm 1.2.}. Theorem \ref{thm 1.4.} is proved in Section \ref{sec 4.}. In Section \ref{sec 5.}, we prove Theorem \ref{thm 1.6.}. in Section \ref{sec 6.}, we prove Theorems \ref{thm 1.7.} and \ref{thm 1.9.}.

\section{Preliminaries}\label{sec 2.}
\subsection{Notation} Let us introduce some notation used in this paper. Throughout the paper, $\mathscr{F}$ denotes the Fourier transform, and the inverse Fourier transform is denoted by $\mathscr{F}^{-1}$. $C>0$ stands for a positive universal constant, which may be different at different places. $a\lesssim b$ means $a\le Cb$ for some constant $C>0$. $a\sim b$ expresses $a\lesssim b$ and $b\lesssim a$.
Given normed spaces $X$ and $Y$, $X\hookrightarrow Y$ means that $X$ is continuously embedded in $Y$.
For $p\in \left[1,\;\infty \right]$, $p'$ denotes the dual number of $p$, i.e. $1/p+1/p'=1$.
For $s\in \mathbb R$, we denote by $\left\lceil s\right\rceil$ the minimal integer which is larger than or equal to $s$.
As in \cite{WHHG11}, for $s\in \mathbb R$ and $1<p<\infty $, we denote by $H_{p}^{s} (\mathbb R^{d} )$ and $\dot{H}_{p}^{s} (\mathbb R^{d} )$ the nonhomogeneous Sobolev space and homogeneous Sobolev space, respectively. The norms of these spaces are given as
$$
\left\| f\right\| _{H_{p}^{s} (\mathbb R^{d} )} :=\left\| (I-\Delta)^{s/2} f\right\| _{L^{p} (\mathbb R^{d} )} , ~\left\| f\right\| _{\dot{H}_{p}^{s} (\mathbb R^{d} )} :=\left\| (-\Delta)^{s/2} f\right\| _{L^{p} (\mathbb R^{d} )},
$$
where $(I-\Delta)^{s/2}f =\mathscr{F}^{-1} \left(1+|\xi|^{2} \right)^{s/2} \mathscr{F}f$ and $(-\Delta)^{s/2}f =\mathscr{F}^{-1} |\xi|^{s} \mathscr{F}f$. As usual, we abbreviate $H_{2}^{s} (\mathbb R^{d} )$ and $\dot{H}_{2}^{s} (\mathbb R^{d} )$ as $H^{s} (\mathbb R^{d} )$ and $\dot{H}^{s} (\mathbb R^{d} )$, respectively. For
$0<p,\; q\le \infty $, we denote by $L^{p,q} (\mathbb R^{d})$ the Lorentz space (see e.g. \cite{G14}).
The quasi-norms of these spaces are given by
$$
\left\|f\right\|_{L^{p,q} (\mathbb R^{d})}:=\left(\int_{0}^{\infty}{\left(t^{\frac{1}{p}}f^{*}(t)\right)^{k}
\frac{dt}{t}}\right)^{\frac{1}{\tilde{q}}},~~\textnormal{when}~~0<q<\infty,
$$
$$
\left\|f\right\|_{L^{p,\infty} (\mathbb R^{d})}:=\sup_{t>0}t^{\frac{1}{p}}f^{*}(t),~~\textnormal{when}~~q=\infty,
$$
where $f^{*}(t)=\inf\left\{\tau:M^{d}\left(\left\{x:|f(x)|>\tau\right\}\right)\le t\right\}$, with $M^{d}$ being the Lebesgue measure in $\mathbb R^{d}$. Note that $L^{p,q} \left(\mathbb R^{d}
\right)$ is a quasi-Banach space for $0<p,\; q\le \infty $.  When $1<p<\infty$ and $1\le q \le \infty$,  $L^{p,q} \left(\mathbb R^{d}
\right)$ can be turned into a Banach space via an equivalent norm.  In particular $L^{p,p} (\mathbb R^{d})=L^{p}
(\mathbb R^{d})$, while $L^{p,\infty}$ corresponds to weak $L^p$ space.
These spaces are natural in the context of \eqref{GrindEQ__1_1_} since $|x|^{-b}\in L^{\frac{d}{b},\infty}(\mathbb R^d)$.
In general, we have the embedding $L^{p,q}\hookrightarrow L^{p,r}$ for $q<r$.
Note also that $\left\| \left|f\right|^{r} \right\| _{L^{p,q} } =\left\| f\right\| _{L^{pr,qr}}^{r}$ for $1\le p,\;r<\infty $, $1\le q\le \infty $.
We also have the H\"{o}lder's inequality in Lorentz spaces (see e.g. \cite{O63}):
$$\left\| fg\right\| _{L^{p,q} } \lesssim \left\| f\right\|
_{L^{p_{1},q_{1} } } \left\| g\right\| _{L^{p_{2},q_{2} } } .$$
for $1<p,p_{1},p_{2}<\infty $ and $1\le q,q_{1} ,q_{2}\le \infty $ satisfying

$$
\frac{1}{p}=\frac{1}{p_{1}} +\frac{1}{p_{2}} ,~\frac{1}{q}=\frac{1}{q_{1} } +\frac{1}{q_{2} }.
$$
For $I\subset \mathbb R$ and $\gamma \in \left[1,\;\infty \right]$, we will use the space-time mixed space $L^{\gamma } \left(I,X\left(\mathbb R^{d}
\right)\right)$ whose norm is defined by
\[\left\| f\right\|_{L^{\gamma } \left(I,\;X(\mathbb R^{d})\right)}
=\left(\int _{I}\left\| f\right\| _{X(\mathbb R^{d})}^{\gamma } dt
\right)^{\frac{1}{\gamma } } ,\]
with a usual modification when $\gamma =\infty $, where $X(\mathbb R^{d})$
is a normed space on $\mathbb R^{d} $. If there is no confusion, $\mathbb R^{d} $ will be omitted in various function spaces.

\subsection{Sobolev-Lorentz spaces}

Throughout the paper, we mainly use the Sobolev-Lorentz spaces.
In this subsection, we recall some useful facts about the Sobolev-Lorentz spaces.
See \cite{AT21,AK232,AKR24, HYZ12} for example.

For $s\in \mathbb R$, $1<p<\infty$ and $1\le q \le \infty$, the (nonhomogeneous) Sobolev-Lorentz space ${H}^{s}_{p,q}(\mathbb R^{d})$ is defined as the set of tempered distribution $f\in \mathscr{S}'(\mathbb R^d)$ such that $(I-\Delta)^{s/2}f \in L^{p,q}(\mathbb R^{d})$, equipped with the norm
$$\left\|f\right\|_{{H}^{s}_{p,q}(\mathbb R^{d})}:=\left\|(I-\Delta)^{s/2}f \right\|_{L^{p,q}(\mathbb R^{d})}.$$
The homogeneous Sobolev-Lorentz space $\dot{H}^{s}_{p,q}(\mathbb R^{d})$ is defined as the set of equivalence classes of distribution $f\in \mathscr{S}'(\mathbb R^d)/{\mathscr{P}(\mathbb R^d)}$ such that $(-\Delta)^{s/2}f \in L^{p,q}(\mathbb R^{d})$, equipped with the norm
$$\left\|f\right\|_{\dot{H}^{s}_{p,q}(\mathbb R^{d})}:=\left\|(-\Delta)^{s/2}f\right\|_{L^{p,q}(\mathbb R^{d})},$$
where $\mathscr{P}(\mathbb R^d)$ denotes the set of polynomials with $d$ variables.
\begin{lemma}[\cite{AK232}]\label{lem 2.1.}
Let $s\ge 0$, $1<p<\infty$ and $1\le q_{1}\le q_{2}\le \infty$. Then we have

$(a)$ $\dot{H}^{s}_{p,1}\hookrightarrow \dot{H}^{s}_{p,q_{1}} \hookrightarrow\dot{H}^{s}_{p,q_{2}} \hookrightarrow \dot{H}^{s}_{p,\infty}$.

$(b)$ $\dot{H}^{s}_{p,p}=\dot{H}^{s}_{p}$.
\end{lemma}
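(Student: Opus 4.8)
The plan is to reduce both assertions entirely to the corresponding facts for scalar Lorentz spaces, exploiting that the lemma's norms are defined by first applying the Fourier multiplier $(-\Delta)^{s/2}$ and then taking an $L^{p,q}$ quasi-norm. By the very definition of $\dot H^s_{p,q}$, the map $f\mapsto (-\Delta)^{s/2}f$ is an isometric bijection from $\dot H^{s}_{p,q}$ onto $L^{p,q}$ (on the quotient $\mathscr S'/\mathscr P$ on which $(-\Delta)^{s/2}$ is well defined). Consequently, for any two exponents $q_1,q_2$ we have the equivalence
\[
\dot H^{s}_{p,q_1}\hookrightarrow \dot H^{s}_{p,q_2}\quad\Longleftrightarrow\quad L^{p,q_1}\hookrightarrow L^{p,q_2},
\]
with the same embedding constant, and likewise $\dot H^{s}_{p,p}=\dot H^{s}_p$ will follow once we know $L^{p,p}=L^p$. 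So no genuinely new work is needed at the Sobolev level; everything is inherited from the Lorentz-space layer, which is exactly the embedding $L^{p,q}\hookrightarrow L^{p,r}$ for $q<r$ and the identification $L^{p,p}=L^p$ already recorded in Section~\ref{sec 2.}.

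For part (a) I would therefore establish the Lorentz nesting $L^{p,q_1}\hookrightarrow L^{p,q_2}$ for $1\le q_1\le q_2\le\infty$ through the decreasing rearrangement $f^{*}$, which is the only computation in the proof. The endpoint $q_2=\infty$ comes from monotonicity of $f^{*}$: since $f^{*}(s)\ge f^{*}(t)$ for $s\le t$,
\[
\left\|f\right\|_{L^{p,q_1}}^{q_1}\ge \int_{0}^{t}\left(s^{1/p}f^{*}(s)\right)^{q_1}\frac{ds}{s}\ge \left(f^{*}(t)\right)^{q_1}\int_{0}^{t}s^{q_1/p}\frac{ds}{s}=\frac{p}{q_1}\,t^{q_1/p}\left(f^{*}(t)\right)^{q_1},
\]
which yields $t^{1/p}f^{*}(t)\lesssim \left\|f\right\|_{L^{p,q_1}}$ and hence $L^{p,q_1}\hookrightarrow L^{p,\infty}$. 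For finite $q_2$ with $q_1<q_2$ I would split the integrand as $\left(t^{1/p}f^{*}(t)\right)^{q_2}=\left(t^{1/p}f^{*}(t)\right)^{q_2-q_1}\left(t^{1/p}f^{*}(t)\right)^{q_1}$, bound the first factor by $\left\|f\right\|_{L^{p,\infty}}^{q_2-q_1}$, and then use the $L^{p,\infty}$ bound just obtained to conclude $\left\|f\right\|_{L^{p,q_2}}\lesssim\left\|f\right\|_{L^{p,q_1}}$. Transporting this chain through the isometry $(-\Delta)^{s/2}$ gives the full string of embeddings in (a), including the two endpoint inclusions at $q=1$ and $q=\infty$.

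For part (b) I would simply compute the diagonal Lorentz quasi-norm: since the decreasing rearrangement is equimeasurable with $f$,
\[
\left\|f\right\|_{L^{p,p}}^{p}=\int_{0}^{\infty}\left(t^{1/p}f^{*}(t)\right)^{p}\frac{dt}{t}=\int_{0}^{\infty}\left(f^{*}(t)\right)^{p}\,dt=\left\|f\right\|_{L^{p}}^{p},
\]
so that $L^{p,p}=L^{p}$ with equal norms, and the definition of $\dot H^{s}_{p,q}$ then forces $\dot H^{s}_{p,p}=\dot H^{s}_p$. I do not anticipate a serious obstacle here: the statement is essentially a restatement of standard Lorentz-space functional analysis, and the only point requiring a little care is bookkeeping at the quasi-Banach endpoints $q_1=1$ and $q_2=\infty$ (where the functionals are quasi-norms rather than norms) and the tacit well-definedness of the multiplier on $\mathscr S'/\mathscr P$; both are handled by the rearrangement estimates above without any additional ideas.
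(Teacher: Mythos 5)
Your proposal is correct. The paper itself gives no proof of this lemma (it is quoted from \cite{AK232}), but your argument --- transporting the classical Lorentz nesting $L^{p,q_1}\hookrightarrow L^{p,q_2}$ and the identity $L^{p,p}=L^{p}$ through the defining multiplier $(-\Delta)^{s/2}$, with the nesting itself proved by the standard rearrangement estimates --- is exactly the expected route, and both computations (the $L^{p,\infty}$ endpoint bound and the interpolation of the integrand for finite $q_2$) are carried out correctly.
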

\begin{lemma}[\cite{AK232}]\label{lem 2.2.}
Let $s\ge 0$, $1<p<\infty $, $1\le q\le \infty$ and $v=s-[s]$. Then we have
\[ \left\|f\right\|_{\dot{H}_{p,q}^{s} }\sim \sum _{\left|\alpha \right|=[s]}\left\|D^{\alpha }f\right\| _{\dot{H}_{p,q}^{v}}.\]
\end{lemma}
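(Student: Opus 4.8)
The plan is to reduce the claimed two-sided bound to the $L^{p,q}$-boundedness of a finite family of Fourier multipliers, each homogeneous of degree $0$ and smooth away from the origin, hence of Mikhlin--H\"ormander type. Write $k:=[s]$ so that $s=k+v$ with $v\in[0,1)$. Both $(-\Delta)^{s/2}$ and the operators $(-\Delta)^{v/2}D^{\alpha}$ with $|\alpha|=k$ have explicit Fourier symbols, namely $|\xi|^{s}$ and $|\xi|^{v}(i\xi)^{\alpha}$ (any other normalization convention for $D^{\alpha}$ differs by a unimodular constant that plays no role below), and the two sides of the asserted equivalence are obtained from one another by composing with zero-homogeneous multipliers. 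It therefore suffices to know that every $m\in C^{\infty}(\mathbb R^{d}\setminus\{0\})$ which is homogeneous of degree $0$ defines a bounded operator $T_{m}$ on $L^{p,q}(\mathbb R^{d})$ for $1<p<\infty$ and $1\le q\le\infty$. This follows from the classical $L^{r}$-boundedness of Mikhlin multipliers for all $1<r<\infty$ together with real interpolation: choosing $1<p_{0}<p<p_{1}<\infty$ and the appropriate $\theta$ gives $(L^{p_{0}},L^{p_{1}})_{\theta,q}=L^{p,q}$, so the $L^{p_{0}}$- and $L^{p_{1}}$-bounds for $T_{m}$ yield the $L^{p,q}$-bound for every $q$.

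For the upper bound $\sum_{|\alpha|=k}\|D^{\alpha}f\|_{\dot H^{v}_{p,q}}\lesssim\|f\|_{\dot H^{s}_{p,q}}$, I would fix $\alpha$ with $|\alpha|=k$ and consider
\[
m_{\alpha}(\xi):=\frac{|\xi|^{v}(i\xi)^{\alpha}}{|\xi|^{s}}=\frac{(i\xi)^{\alpha}}{|\xi|^{k}}.
\]
Since $|\alpha|=k$, numerator and denominator are each homogeneous of degree $k$, so $m_{\alpha}$ is homogeneous of degree $0$; and as $|\xi|^{k}$ is smooth and nonvanishing on $\mathbb R^{d}\setminus\{0\}$ for every integer $k\ge0$, the function $m_{\alpha}$ is $C^{\infty}$ there. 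Hence $T_{m_{\alpha}}$ is bounded on $L^{p,q}$, and from $(-\Delta)^{v/2}D^{\alpha}f=T_{m_{\alpha}}(-\Delta)^{s/2}f$ we obtain $\|D^{\alpha}f\|_{\dot H^{v}_{p,q}}=\|T_{m_{\alpha}}(-\Delta)^{s/2}f\|_{L^{p,q}}\lesssim\|f\|_{\dot H^{s}_{p,q}}$; summing over the finitely many $\alpha$ gives the claim.

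For the lower bound I would use the elementary algebraic identity obtained by setting
\[
n_{\alpha}(\xi):=\frac{|\xi|^{k}\,\overline{(i\xi)^{\alpha}}}{\sum_{|\beta|=k}\xi^{2\beta}},
\]
which yields $\sum_{|\alpha|=k}n_{\alpha}(\xi)(i\xi)^{\alpha}=|\xi|^{k}$, because $\sum_{|\alpha|=k}|(i\xi)^{\alpha}|^{2}=\sum_{|\alpha|=k}\xi^{2\alpha}$ is exactly the denominator. That denominator is a sum of even monomials, strictly positive for $\xi\neq0$ (some coordinate is nonzero), hence smooth and nonvanishing there, so each $n_{\alpha}$ is again homogeneous of degree $0$ and $C^{\infty}$ on $\mathbb R^{d}\setminus\{0\}$. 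Multiplying the identity by $|\xi|^{v}$ gives $|\xi|^{s}=\sum_{|\alpha|=k}n_{\alpha}(\xi)\,|\xi|^{v}(i\xi)^{\alpha}$, whence $(-\Delta)^{s/2}f=\sum_{|\alpha|=k}T_{n_{\alpha}}\bigl[(-\Delta)^{v/2}D^{\alpha}f\bigr]$. Applying the $L^{p,q}$-boundedness of each $T_{n_{\alpha}}$ and the triangle inequality produces $\|f\|_{\dot H^{s}_{p,q}}\lesssim\sum_{|\alpha|=k}\|D^{\alpha}f\|_{\dot H^{v}_{p,q}}$, which completes the equivalence.

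The homogeneity and smoothness verifications are routine; the step needing the most care is the multiplier theorem on Lorentz spaces. The main obstacle is twofold. First, justifying the Mikhlin--H\"ormander bound on $L^{p,q}$ for the full range $1\le q\le\infty$, including the endpoints $q=1$ and $q=\infty$, which I would handle by real interpolation between two $L^{p_{j}}$-bounds rather than attempting an endpoint estimate directly. Second, the functional-analytic bookkeeping intrinsic to the homogeneous Sobolev--Lorentz spaces, namely the quotient by $\mathscr{P}$ and the meaning of the symbol manipulations, which I would treat by first proving the identities for Schwartz functions whose Fourier transforms are supported away from the origin (a class dense modulo $\mathscr{P}$) and then passing to the limit using the uniform multiplier bounds.
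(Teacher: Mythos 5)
The paper does not actually prove this lemma: it imports it verbatim from \cite{AK232}, so the only comparison available is with the standard argument behind that citation. Your route is exactly that standard argument and it is correct in substance: both directions reduce to $L^{p,q}$-boundedness of zero-homogeneous symbols smooth away from the origin ($m_{\alpha}(\xi)=(i\xi)^{\alpha}/|\xi|^{k}$ for the upper bound, $n_{\alpha}(\xi)=|\xi|^{k}\overline{(i\xi)^{\alpha}}/\sum_{|\beta|=k}\xi^{2\beta}$ for the lower bound); the algebra is right, since $\overline{(i\xi)^{\alpha}}(i\xi)^{\alpha}=\xi^{2\alpha}$ gives $\sum_{|\alpha|=k}n_{\alpha}(\xi)(i\xi)^{\alpha}=|\xi|^{k}$, and the denominator $\sum_{|\beta|=k}\xi^{2\beta}$ is indeed a positive, $2k$-homogeneous polynomial on $\mathbb R^{d}\setminus\{0\}$; zero-homogeneity plus smoothness off the origin yields the Mikhlin condition $|\partial^{\gamma}m|\lesssim|\xi|^{-|\gamma|}$; and real interpolation $(L^{p_{0}},L^{p_{1}})_{\theta,q}=L^{p,q}$ correctly upgrades the $L^{r}$ multiplier bounds to $L^{p,q}$ for all $1\le q\le\infty$, endpoints included. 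Whether \cite{AK232} organizes this as multipliers acting directly on Lorentz spaces (as you do) or as interpolation of the classical Sobolev-space equivalence $\|f\|_{\dot H^{s}_{p}}\sim\sum_{|\alpha|=k}\|D^{\alpha}f\|_{\dot H^{v}_{p}}$ between two Lebesgue exponents is a difference of bookkeeping only; your version is arguably the more direct of the two.

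The one soft spot is your final reduction step: you propose to prove the symbol identities for Schwartz functions with Fourier support away from the origin and then ``pass to the limit,'' calling this class ``dense modulo $\mathscr P$.'' That density is fine for $q<\infty$ but fails at $q=\infty$: $L^{p,\infty}$ is not separable and the closure of such nice functions in $\dot H^{s}_{p,\infty}$ is a proper subspace, so a limiting argument cannot reach general $f$, yet the lemma is claimed for all $1\le q\le\infty$. The repair is easy and avoids density altogether: for $f\in\dot H^{s}_{p,q}$ one has $g:=(-\Delta)^{s/2}f\in L^{p,q}\subset L^{p_{0}}+L^{p_{1}}$, so $T_{m_{\alpha}}g$ is well defined, and the identity $(-\Delta)^{v/2}D^{\alpha}f=T_{m_{\alpha}}g$ holds in $\mathscr S'/\mathscr P$ because both sides agree when paired against every Schwartz function whose Fourier transform is compactly supported away from the origin, and two tempered distributions agreeing on all such test functions have Fourier transform supported at $\{0\}$, i.e.\ differ by a polynomial. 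The same testing argument justifies the reconstruction identity $(-\Delta)^{s/2}f=\sum_{|\alpha|=k}T_{n_{\alpha}}\bigl[(-\Delta)^{v/2}D^{\alpha}f\bigr]$ in the lower bound. With that substitution your proof is complete for the full stated range of $q$.
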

\begin{lemma}[\cite{AKR24}]\label{lem 2.3.}
Let $s\ge 0$, $1<p<\infty $ and $1\le q\le \infty$. Then we have $H^{s}_{p,q}=L^{p,q}\cap \dot{H}^{s}_{p,q}$ with
\[\left\|f\right\|_{H^{s}_{p,q}}\sim \left\|f\right\|_{L^{p,q}}+\left\|f\right\|_{\dot{H}^{s}_{p,q}}.\]
\end{lemma}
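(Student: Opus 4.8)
The plan is to reduce the claimed equivalence to the $L^{p,q}$-boundedness of three Fourier multiplier operators, and then to invoke the Mikhlin--H\"ormander multiplier theorem on Lorentz spaces; this is the natural Lorentz analogue of the classical Bessel-potential characterization $H^s_p=L^p\cap\dot H^s_p$. Write $\langle\xi\rangle:=(1+|\xi|^2)^{1/2}$ and introduce the symbols
\[
M_1(\xi):=\frac{\langle\xi\rangle^{s}}{1+|\xi|^{s}},\quad
M_2(\xi):=\langle\xi\rangle^{-s},\quad
M_3(\xi):=\frac{|\xi|^{s}}{\langle\xi\rangle^{s}},
\]
together with the multiplier operators $M_j(D):=\mathscr{F}^{-1}M_j\mathscr{F}$. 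On the Fourier side one has the operator identities $(I-\Delta)^{s/2}=M_1(D)\bigl(I+(-\Delta)^{s/2}\bigr)$, $I=M_2(D)(I-\Delta)^{s/2}$ and $(-\Delta)^{s/2}=M_3(D)(I-\Delta)^{s/2}$, each of which is immediate from multiplying out the symbols.

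Granting for the moment that each $M_j(D)$ is bounded on $L^{p,q}$, the forward inequality follows from
\[
\|f\|_{H^s_{p,q}}=\|(I-\Delta)^{s/2}f\|_{L^{p,q}}
\lesssim \|f+(-\Delta)^{s/2}f\|_{L^{p,q}}
\lesssim \|f\|_{L^{p,q}}+\|f\|_{\dot H^s_{p,q}},
\]
where the last step uses that $L^{p,q}$ is a Banach space up to an equivalent norm for $1<p<\infty$, $1\le q\le\infty$, so that the triangle inequality holds with a constant. Conversely, applying $M_2(D)$ and $M_3(D)$ to $(I-\Delta)^{s/2}f$ gives $\|f\|_{L^{p,q}}\lesssim\|f\|_{H^s_{p,q}}$ and $\|f\|_{\dot H^s_{p,q}}\lesssim\|f\|_{H^s_{p,q}}$, and summing these yields the reverse inequality. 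Thus the whole lemma reduces to the three multiplier bounds.

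My tool for these is the Mikhlin--H\"ormander theorem: a symbol $m\in C^{N}(\mathbb R^d\setminus\{0\})$ with $N=\lfloor d/2\rfloor+1$ satisfying $\sup_{\xi\neq0}|\xi|^{|\alpha|}\,|\partial^{\alpha}m(\xi)|<\infty$ for $|\alpha|\le N$ defines a Calder\'on--Zygmund operator, hence is bounded on $L^{p,q}(\mathbb R^d)$ for all $1<p<\infty$ and $1\le q\le\infty$; the Lorentz bounds follow from the $L^2$ and weak-$(1,1)$ estimates for such operators together with real interpolation. The symbol $M_2=\langle\xi\rangle^{-s}$ is a genuinely smooth function of $\xi$ for every $s\ge0$ and satisfies the above bounds by an elementary induction, so $M_2(D)$ is bounded.

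The heart of the matter is the origin, and this is where the hypothesis $s\ge0$ is essential. For $M_1$ and $M_3$ the factor $|\xi|^{s}$ fails to be smooth at $\xi=0$ when $s$ is not an even integer, so I verify the Mikhlin bounds directly. Away from the origin, say for $|\xi|\ge1$, both symbols are smooth and of symbol type, which is routine. For $|\xi|\le1$ I use that each derivative $\partial^{\alpha}$ falling on $|\xi|^{s}$ produces a factor of size $O(|\xi|^{s-|\alpha|})$, while the remaining smooth factors $\langle\xi\rangle^{\pm s}$ and $(1+|\xi|^s)^{-1}$ (the latter comparable to a constant near $0$, with derivatives controlled analogously) contribute no additional singularity; hence $|\partial^{\alpha}M_j(\xi)|\lesssim|\xi|^{s-|\alpha|}$ near the origin for $j=1,3$. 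Since $s\ge0$ gives $|\xi|^{s-|\alpha|}\le|\xi|^{-|\alpha|}$ for $|\xi|\le1$, this is exactly the required Mikhlin bound. I expect the main obstacle to be precisely this derivative bookkeeping for the quotients $M_1$ and $M_3$ uniformly across the origin, though it is only a finite computation. Finally, a short remark checks that $L^{p,q}\cap\dot H^s_{p,q}$ is well defined as a space of functions: an element of $\dot H^s_{p,q}$ is an equivalence class modulo $\mathscr{P}(\mathbb R^d)$, the only polynomial in $L^{p,q}$ is $0$, so the intersection selects a unique genuine representative and the stated norm is unambiguous.
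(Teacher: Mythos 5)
The paper itself offers no proof of Lemma \ref{lem 2.3.}: it is imported verbatim from \cite{AKR24}, so there is no in-paper argument to compare yours against line by line. Judged on its own merits, your proof is correct, and it is the natural one: the classical Bessel-potential argument for $H^s_p=L^p\cap\dot H^s_p$, transplanted to Lorentz spaces by observing that the three relevant multipliers are of Mikhlin--H\"ormander type, hence Calder\'on--Zygmund, hence bounded on $L^{p,q}$ for $1<p<\infty$, $1\le q\le\infty$ by real interpolation between Lebesgue spaces (this is exactly where the Lorentz refinement comes for free, including $q=\infty$, where one cannot argue by density). The symbol identities $\langle\xi\rangle^{s}=M_1(\xi)(1+|\xi|^{s})$, $1=M_2(\xi)\langle\xi\rangle^{s}$, $|\xi|^{s}=M_3(\xi)\langle\xi\rangle^{s}$ are right, and your derivative bookkeeping near the origin is the genuine content: since every chain-rule term for $\partial^{\alpha}\bigl((1+|\xi|^{s})^{-1}\bigr)$ carries at least one full factor $|\xi|^{s}$ per differentiated copy of $|\xi|^{s}$, one indeed gets $|\partial^{\alpha}M_j(\xi)|\lesssim|\xi|^{s-|\alpha|}\le|\xi|^{-|\alpha|}$ on $|\xi|\le1$ precisely because $s\ge0$. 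Two cosmetic points: $(1+|\xi|^{s})^{-1}$ is not smooth at the origin for non-even $s$ (your parenthetical ``controlled analogously'' is what actually does the work, so phrase it as a Mikhlin-type bound rather than smoothness), and the identities $f=M_2(D)(I-\Delta)^{s/2}f$ etc.\ deserve one sentence of distributional justification (the operators are defined on $L^{p_0}+L^{p_1}$ by the CZ extension and agree with the Fourier-side definition there). Your closing remark that the intersection $L^{p,q}\cap\dot H^{s}_{p,q}$ is unambiguous because no nonzero polynomial lies in $L^{p,q}$ is a point the statement itself silently needs, and it is good that you made it explicit.
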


\begin{lemma}[\cite{AKR24}]\label{lem 2.4.}
Let $-\infty < s_{2} \le s_{1} <\infty $ and $1<p_{1} \le p_{2} <\infty $ with $s_{1} -\frac{d}{p_{1} } =s_{2} -\frac{d}{p_{2} } $. Then for any $1\le q\le \infty$, there holds the embeddings:
$$
\dot{H}_{p_{1},q}^{s_{1} } \hookrightarrow \dot{H}_{p_{2},q}^{s_{2}},~H_{p_{1},q}^{s_{1} } \hookrightarrow H_{p_{2},q}^{s_{2}}
$$
\end{lemma}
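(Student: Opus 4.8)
The plan is to reduce both embeddings to the boundedness of a single convolution operator on Lorentz spaces and then invoke O'Neil's convolution inequality (the Lorentz-space analogue of Young's inequality, from \cite{O63}). First I would set $\alpha:=s_{1}-s_{2}\ge 0$; the hypothesis $s_{1}-\frac{d}{p_{1}}=s_{2}-\frac{d}{p_{2}}$ then reads $\frac{1}{p_{2}}=\frac{1}{p_{1}}-\frac{\alpha}{d}$. When $\alpha=0$ we have $p_{1}=p_{2}$ and $s_{1}=s_{2}$, so both embeddings are identities; hence I may assume $\alpha>0$, which forces $p_{1}<p_{2}$ and, since $\frac{1}{p_{2}}>0$, also $\alpha<d$.

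For the homogeneous embedding, given $f\in\dot H^{s_{1}}_{p_{1},q}$ I would put $g:=(-\Delta)^{s_{1}/2}f\in L^{p_{1},q}$ and observe that $(-\Delta)^{s_{2}/2}f=(-\Delta)^{-\alpha/2}g=I_{\alpha}g$, where $I_{\alpha}$ is the Riesz potential whose kernel is a constant multiple of $|x|^{\alpha-d}$. Since $|x|^{\alpha-d}=|x|^{-(d-\alpha)}\in L^{d/(d-\alpha),\infty}(\mathbb R^{d})$, O'Neil's inequality applied with the exponent relation $1+\frac{1}{p_{2}}=\frac{d-\alpha}{d}+\frac{1}{p_{1}}$ and second indices $\infty$ (for the kernel) and $q$ (for $g$) yields $\left\|I_{\alpha}g\right\|_{L^{p_{2},q}}\lesssim\bigl\||x|^{\alpha-d}\bigr\|_{L^{d/(d-\alpha),\infty}}\left\|g\right\|_{L^{p_{1},q}}\lesssim\left\|g\right\|_{L^{p_{1},q}}$; unwinding the definitions gives $\left\|f\right\|_{\dot H^{s_{2}}_{p_{2},q}}\lesssim\left\|f\right\|_{\dot H^{s_{1}}_{p_{1},q}}$. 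The constraints required by O'Neil, namely $1<p_{2}<\infty$ and that the kernel exponent $d/(d-\alpha)$ lie in $(1,\infty)$, hold precisely because $1<p_{1}\le p_{2}<\infty$ and $0<\alpha<d$.

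For the nonhomogeneous embedding I would repeat the argument with the Bessel potential: writing $h:=(I-\Delta)^{s_{1}/2}f$, we have $(I-\Delta)^{s_{2}/2}f=(I-\Delta)^{-\alpha/2}h=G_{\alpha}\ast h$, where $G_{\alpha}\ge 0$ is the Bessel kernel. The key pointwise fact is that $G_{\alpha}(x)\lesssim|x|^{\alpha-d}$ near the origin and decays exponentially at infinity, so $G_{\alpha}(x)\lesssim|x|^{\alpha-d}$ for all $x$ and hence $G_{\alpha}\in L^{d/(d-\alpha),\infty}$ as well. The same application of O'Neil's inequality then gives $\left\|G_{\alpha}\ast h\right\|_{L^{p_{2},q}}\lesssim\left\|h\right\|_{L^{p_{1},q}}$, i.e. $\left\|f\right\|_{H^{s_{2}}_{p_{2},q}}\lesssim\left\|f\right\|_{H^{s_{1}}_{p_{1},q}}$. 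I deliberately avoid routing the nonhomogeneous case through Lemma \ref{lem 2.3.}, since that characterization requires $s\ge 0$ whereas here $s_{2}$ may be negative.

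The routine verifications, namely that the Fourier multipliers $(-\Delta)^{-\alpha/2}$ and $(I-\Delta)^{-\alpha/2}$ are realized by the stated convolution kernels and that $G_{\alpha}$ has the claimed size bounds, are classical; the only genuine content is the membership $|x|^{\alpha-d}\in L^{d/(d-\alpha),\infty}$ together with the exponent bookkeeping $\frac{1}{p_{2}}=\frac{1}{p_{1}}-\frac{\alpha}{d}$ that matches O'Neil's hypothesis. I expect the main obstacle, such as it is, to be purely notational: keeping the two second indices $(\infty,q)\mapsto q$ in the convolution inequality straight and confirming that the degenerate endpoints $q=1$ and $q=\infty$ are covered by the cited form of O'Neil's theorem. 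No real interpolation is needed once the weak-Lorentz membership of the kernel is in hand, because the gain in the first index is produced directly by the convolution inequality.
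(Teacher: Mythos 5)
Your proof is correct. Note that the paper itself offers no argument for this lemma --- it is imported verbatim from \cite{AKR24} --- so there is no in-text proof to compare against; your route is the standard one for embeddings in Lorentz scales and is, in substance, what the cited reference relies on: realize $(-\Delta)^{-\alpha/2}$ and $(I-\Delta)^{-\alpha/2}$ (with $\alpha=s_{1}-s_{2}$) as convolution with the Riesz kernel $|x|^{\alpha-d}\in L^{d/(d-\alpha),\infty}$ and the Bessel kernel $G_{\alpha}\lesssim |x|^{\alpha-d}$, respectively, and then apply O'Neil's convolution inequality \cite{O63}. Your bookkeeping checks out: after disposing of the trivial case $p_{1}=p_{2}$, the hypothesis forces $\alpha=d\left(\frac{1}{p_{1}}-\frac{1}{p_{2}}\right)\in(0,d)$, the exponent relation $1+\frac{1}{p_{2}}=\frac{d-\alpha}{d}+\frac{1}{p_{1}}$ is exactly O'Neil's condition, and the second-index condition $\frac{1}{q}\le\frac{1}{\infty}+\frac{1}{q}$ holds with equality for every $q\in[1,\infty]$, so the endpoints $q=1$ and $q=\infty$ are indeed covered. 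Two details deserve explicit credit: first, your remark that the nonhomogeneous embedding should be proved directly through the Bessel potential rather than via Lemma \ref{lem 2.3.} is exactly right, since that characterization requires $s\ge 0$ while the lemma permits $s_{2}<0$; second, the only analytic inputs beyond O'Neil are the classical facts that $(-\Delta)^{-\alpha/2}$ acts on $L^{p_{1},q}$ (with $\frac{1}{p_{1}}-\frac{\alpha}{d}>0$) as the Riesz potential modulo polynomials and that $G_{\alpha}(x)\lesssim|x|^{\alpha-d}$ globally, both of which are legitimately citable as standard.
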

\begin{lemma}[\cite{AKR24}]\label{lem 2.5.}
Let $s\in \mathbb R$, $\varepsilon\ge 0$, $1<p<\infty $ and $1\le q\le \infty$. Then we have $H^{s+\varepsilon}_{p,q}\hookrightarrow H^{s}_{p,q}$.
\end{lemma}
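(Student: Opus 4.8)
The plan is to reduce the claimed embedding to the $L^{p,q}$-boundedness of the Bessel potential operator $(I-\Delta)^{-\varepsilon/2}$, and then to obtain that boundedness from a Young-type convolution inequality in Lorentz spaces. By the very definition of the Sobolev-Lorentz norm we have $\|f\|_{H^s_{p,q}}=\|(I-\Delta)^{s/2}f\|_{L^{p,q}}$ for every $f\in\mathscr{S}'(\mathbb R^d)$. Setting $g:=(I-\Delta)^{(s+\varepsilon)/2}f$, I would write $(I-\Delta)^{s/2}f=(I-\Delta)^{-\varepsilon/2}g$, so that $\|f\|_{H^s_{p,q}}=\|(I-\Delta)^{-\varepsilon/2}g\|_{L^{p,q}}$ while $\|f\|_{H^{s+\varepsilon}_{p,q}}=\|g\|_{L^{p,q}}$. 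Thus the embedding $H^{s+\varepsilon}_{p,q}\hookrightarrow H^s_{p,q}$ is equivalent to the estimate $\|(I-\Delta)^{-\varepsilon/2}g\|_{L^{p,q}}\lesssim\|g\|_{L^{p,q}}$, i.e. to the boundedness of $(I-\Delta)^{-\varepsilon/2}$ on $L^{p,q}(\mathbb R^d)$ for all $1<p<\infty$ and $1\le q\le\infty$.

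The case $\varepsilon=0$ is trivial, as $(I-\Delta)^{0}$ is the identity. For $\varepsilon>0$ I would invoke the classical fact that $(I-\Delta)^{-\varepsilon/2}$ acts by convolution with the Bessel kernel $G_{\varepsilon}:=\mathscr{F}^{-1}\big((1+|\cdot|^2)^{-\varepsilon/2}\big)$, which is a nonnegative, radially decreasing, integrable function whose Fourier transform equals $1$ at the origin, so that $\|G_{\varepsilon}\|_{L^1}=\int_{\mathbb R^d}G_{\varepsilon}=1$. Hence $(I-\Delta)^{-\varepsilon/2}g=G_{\varepsilon}*g$, and the problem is reduced to bounding convolution with a fixed $L^1$ kernel on $L^{p,q}$.

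I would then conclude by the Young-type inequality $\|G_{\varepsilon}*g\|_{L^{p,q}}\lesssim\|G_{\varepsilon}\|_{L^1}\|g\|_{L^{p,q}}$, valid for $1<p<\infty$ and $1\le q\le\infty$. In this range $L^{p,q}(\mathbb R^d)$ carries an equivalent translation-invariant norm, so the inequality follows from Minkowski's integral inequality applied to $G_{\varepsilon}*g(x)=\int_{\mathbb R^d}g(x-y)G_{\varepsilon}(y)\,dy$, using that the $L^{p,q}$-norm of a translate equals that of the original function (the Lorentz quasinorm depends only on the translation-invariant decreasing rearrangement). Chaining the three steps gives $\|f\|_{H^s_{p,q}}\lesssim\|f\|_{H^{s+\varepsilon}_{p,q}}$, the desired embedding. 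As an aside, the boundedness of $(I-\Delta)^{-\varepsilon/2}$ on $L^{p,q}$ could equally be deduced from the Mikhlin--H\"ormander multiplier theorem on Lorentz spaces, since $(1+|\xi|^2)^{-\varepsilon/2}$ is a Mikhlin multiplier; I prefer the convolution route because it is self-contained.

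The argument is essentially soft, and the only genuinely substantive inputs are the integrability and mass-one property of the Bessel kernel $G_{\varepsilon}$ and the validity of Minkowski's inequality on $L^{p,q}$ in the stated range. I would expect the decay and integrability of $G_{\varepsilon}$ (standard, through its explicit representation as a modified-Bessel-type integral) and the translation invariance of the equivalent Banach norm on $L^{p,q}$ for $1<p<\infty$, $1\le q\le\infty$ to be the two points requiring care. Neither constitutes a real obstacle, which is consistent with the result being quoted from \cite{AKR24}.
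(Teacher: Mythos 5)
Your proposal is correct. Note first that the paper itself offers no proof of this lemma: it is quoted verbatim from \cite{AKR24}, so there is no internal argument to compare against; what you have done is supply the missing proof. Your reduction is the natural one: since $\|f\|_{H^{s}_{p,q}}=\|(I-\Delta)^{s/2}f\|_{L^{p,q}}$ and the Bessel multipliers compose multiplicatively on $\mathscr{S}'$, the embedding is exactly the statement that $(I-\Delta)^{-\varepsilon/2}$ is bounded on $L^{p,q}$, and your verification of this via the Bessel kernel is sound: $G_\varepsilon\ge 0$, $G_\varepsilon\in L^1$ with $\|G_\varepsilon\|_{L^1}=\widehat{G_\varepsilon}(0)=1$, and then
\[
\Bigl\|\int G_\varepsilon(y)\,g(\cdot-y)\,dy\Bigr\|_{L^{p,q}}\le \int G_\varepsilon(y)\,\|g(\cdot-y)\|_{L^{p,q}}\,dy=\|G_\varepsilon\|_{L^1}\|g\|_{L^{p,q}},
\]
where the Minkowski step is legitimate precisely because for $1<p<\infty$, $1\le q\le\infty$ the Lorentz space admits an equivalent rearrangement-invariant Banach norm (e.g.\ the Calder\'on norm built from $f^{**}$), which is in particular translation invariant; if one wants to avoid invoking vector-valued Minkowski abstractly, the same bound follows by duality against $L^{p',q'}$ (or against $L^{p',1}$ when $q=\infty$) together with the H\"older inequality in Lorentz spaces already recorded in the paper. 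The only alternative route worth mentioning is arguably even shorter given the paper's toolkit: $(I-\Delta)^{-\varepsilon/2}$ is bounded on $L^{p_0}$ and $L^{p_1}$ by classical Young, and real interpolation $(L^{p_0},L^{p_1})_{\theta,q}=L^{p,q}$ transfers the bound to Lorentz spaces; your convolution argument buys self-containedness and avoids interpolation theory, at the modest cost of justifying the renorming and translation-invariance facts you correctly flagged as the delicate points.
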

As an immediate consequence of Lemmas \ref{lem 2.4.} and \ref{lem 2.5.}, we have the following.
\begin{corollary}[\cite{AKR24}]\label{cor 2.6.}
Let $-\infty < s_{2} \le s_{1} <\infty $ and $1<p_{1} \le p_{2} <\infty $ with $s_{1} -\frac{d}{p_{1} } \ge s_{2} -\frac{d}{p_{2} } $. Then for any $1\le q\le \infty$, there holds the embedding: $H_{p_{1},q}^{s_{1} } \hookrightarrow H_{p_{2},q}^{s_{2} }$.
\end{corollary}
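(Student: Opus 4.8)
The plan is to interpolate the scaling-invariant gain in integrability furnished by Lemma \ref{lem 2.4.} against the trivial loss of derivatives furnished by Lemma \ref{lem 2.5.}, by inserting an intermediate smoothness index chosen to restore the scaling equality. Concretely, I would set
\[
\tilde{s}:=s_{2}+\frac{d}{p_{1}}-\frac{d}{p_{2}},
\]
which is designed precisely so that $\tilde{s}-\frac{d}{p_{1}}=s_{2}-\frac{d}{p_{2}}$; that is, the pair $(\tilde{s},p_{1})$ and the pair $(s_{2},p_{2})$ share the same Sobolev scaling, which is exactly the equality hypothesis of Lemma \ref{lem 2.4.}.

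First I would check the two admissibility inequalities. Since $1<p_{1}\le p_{2}<\infty$, we have $\frac{d}{p_{1}}-\frac{d}{p_{2}}\ge 0$, hence $\tilde{s}\ge s_{2}$; together with $1<p_{1}\le p_{2}<\infty$ and the scaling equality, this places us in the hypotheses of Lemma \ref{lem 2.4.}, yielding
\[
H_{p_{1},q}^{\tilde{s}}\hookrightarrow H_{p_{2},q}^{s_{2}}.
\]
Next, using the standing assumption $s_{1}-\frac{d}{p_{1}}\ge s_{2}-\frac{d}{p_{2}}$, I compute
\[
s_{1}-\tilde{s}=\Big(s_{1}-\frac{d}{p_{1}}\Big)-\Big(s_{2}-\frac{d}{p_{2}}\Big)\ge 0,
\]
so that $\varepsilon:=s_{1}-\tilde{s}\ge 0$. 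Applying Lemma \ref{lem 2.5.} with smoothness index $\tilde{s}$, integrability $p_{1}$, and this $\varepsilon$ gives
\[
H_{p_{1},q}^{s_{1}}=H_{p_{1},q}^{\tilde{s}+\varepsilon}\hookrightarrow H_{p_{1},q}^{\tilde{s}}.
\]
Chaining the two embeddings produces $H_{p_{1},q}^{s_{1}}\hookrightarrow H_{p_{1},q}^{\tilde{s}}\hookrightarrow H_{p_{2},q}^{s_{2}}$, which is the claimed embedding.

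There is no serious obstacle here, as the statement is flagged as an immediate consequence; the only point requiring care is the bookkeeping that the intermediate index $\tilde{s}$ simultaneously satisfies $s_{2}\le\tilde{s}$ (so that Lemma \ref{lem 2.4.} applies, the scaling being exact by construction) and $\tilde{s}\le s_{1}$ (so that the loss-of-derivatives embedding of Lemma \ref{lem 2.5.} applies with nonnegative $\varepsilon$). These two inequalities follow directly from $p_{1}\le p_{2}$ and from the hypothesis $s_{1}-\frac{d}{p_{1}}\ge s_{2}-\frac{d}{p_{2}}$, respectively, so no further estimates are needed.
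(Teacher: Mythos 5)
Your proof is correct and is exactly the argument the paper intends: the corollary is presented as an immediate consequence of Lemma \ref{lem 2.4.} and Lemma \ref{lem 2.5.}, and your choice of the intermediate index $\tilde{s}=s_{2}+\frac{d}{p_{1}}-\frac{d}{p_{2}}$ (with the checks $s_{2}\le\tilde{s}\le s_{1}$) is the standard way to chain those two lemmas. No issues.
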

Using Lemma \ref{lem 2.4.} and the fact that $|x|^{-\gamma}\in L^{\frac{d}{\gamma},\infty}$, we also have the following Hardy inequality under Lorentz norms.
\begin{corollary}[Hardy inequality under Lorentz norms, \cite{HYZ12}]\label{cor 2.7.}
Let $1<p<\infty$ and $1\le q\le \infty $ with $0<\gamma <\frac{d}{p}$. Then we have
$$
\left\||x|^{-\gamma}f\right\|_{L^{p,q}}\lesssim \left\|f\right\|_{\dot{H}_{p,q}^{\gamma}}.
$$
\end{corollary}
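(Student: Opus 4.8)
The plan is to combine the H\"older inequality in Lorentz spaces (recorded in the Notation subsection) with the Sobolev-type embedding of Lemma \ref{lem 2.4.}, exactly as the sentence preceding the statement suggests. The guiding idea is to factor $|x|^{-\gamma}f$ and absorb the singular weight into the weak Lebesgue space $L^{\frac{d}{\gamma},\infty}$, paying in a loss of integrability that is then recovered by a Sobolev embedding.

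First I would fix the intermediate exponent $p_2$ through the scaling relation $\frac{1}{p_2}=\frac{1}{p}-\frac{\gamma}{d}$. The hypothesis $0<\gamma<\frac{d}{p}$ gives $0<\frac{1}{p_2}<\frac{1}{p}<1$, so that $p<p_2<\infty$ and in particular $1<p_2<\infty$; moreover $1<\frac{d}{\gamma}<\infty$ since $0<\gamma<\frac{d}{p}<d$. These are precisely the integrability restrictions demanded by the H\"older inequality in Lorentz spaces. Applying that inequality to the product $|x|^{-\gamma}\cdot f$, with first (exponent, index) pair $\bigl(\tfrac{d}{\gamma},\infty\bigr)$ and second pair $(p_2,q)$ — so that $\frac{1}{p}=\frac{\gamma}{d}+\frac{1}{p_2}$ and $\frac{1}{q}=\frac{1}{q_1}+\frac{1}{q_2}$ hold with $q_1=\infty$, $q_2=q$ — yields
$$
\left\||x|^{-\gamma}f\right\|_{L^{p,q}}\lesssim \left\||x|^{-\gamma}\right\|_{L^{\frac{d}{\gamma},\infty}}\left\|f\right\|_{L^{p_2,q}}\lesssim \left\|f\right\|_{L^{p_2,q}},
$$
where the last bound uses the fact that $|x|^{-\gamma}\in L^{\frac{d}{\gamma},\infty}$.

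It then remains to dominate $\left\|f\right\|_{L^{p_2,q}}$ by $\left\|f\right\|_{\dot{H}^{\gamma}_{p,q}}$, which I would obtain from Lemma \ref{lem 2.4.} applied with $s_1=\gamma$, $s_2=0$, $p_1=p$ and $p_2$ as above. The required scaling identity $s_1-\frac{d}{p_1}=s_2-\frac{d}{p_2}$ reduces to $\gamma-\frac{d}{p}=-\frac{d}{p_2}$, i.e. to the definition of $p_2$, while $s_2=0\le \gamma=s_1$ and $1<p_1=p\le p_2<\infty$ verify the remaining hypotheses. Since $\dot{H}^{0}_{p_2,q}=L^{p_2,q}$ by definition, Lemma \ref{lem 2.4.} gives the embedding $\dot{H}^{\gamma}_{p,q}\hookrightarrow L^{p_2,q}$, and chaining it with the previous display completes the argument.

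I do not expect any serious obstacle: the entire proof is a matching of exponents, and both nontrivial ingredients (H\"older and Lemma \ref{lem 2.4.}) are already available. The only points needing a moment of care are confirming that $p_2$ lands in the admissible range $(1,\infty)$ for both tools, and tracking the second Lorentz index $q$ — which survives untouched precisely because the weight is placed in a weak space ($q_1=\infty$), forcing $q_2=q$ in the index balance.
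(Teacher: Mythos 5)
Your argument is correct and is exactly the proof the paper intends: the corollary is introduced with the sentence ``Using Lemma \ref{lem 2.4.} and the fact that $|x|^{-\gamma}\in L^{\frac{d}{\gamma},\infty}$,'' which is precisely your combination of Lorentz-space H\"older (with the weight in $L^{\frac{d}{\gamma},\infty}$, so $q_{1}=\infty$ and the second index $q$ is preserved) and the embedding $\dot{H}^{\gamma}_{p,q}\hookrightarrow L^{p_{2},q}$ with $\frac{1}{p_{2}}=\frac{1}{p}-\frac{\gamma}{d}$. The exponent checks you perform are the only things that need verifying, and they all go through.
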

We also recall the fractional product rule and chain rule in Sobolev-Lorentz spaces.
\begin{lemma}[Fractional product rule in Sobolev-Lorentz spaces, \cite{CN16}]\label{lem 2.7.}
Let $s\ge 0$, $1<p,p_{1},p_{2},p_{3},p_{4} <\infty$ and $1\le q,q_{1},q_{2},q_{3},q_{4} \le \infty$. Assume that
\begin{equation}\nonumber
\frac{1}{p} =\frac{1}{p_{1} }+\frac{1}{p_{2}}=\frac{1}{p_{3} }+\frac{1}{p_{4}},~\frac{1}{q} =\frac{1}{q_{1} }+\frac{1}{q_{2}}=\frac{1}{q_{3} }+\frac{1}{q_{4}}.
\end{equation}
Then we have
\begin{equation}\nonumber
\left\|fg\right\| _{\dot{H}_{p,q}^{s} } \lesssim \left\|f\right\| _{\dot{H}_{p_1,q_1}^{s} } \left\| g\right\| _{L^{p_{2},q_{2}}} +\left\| f\right\| _{L^{p_{3},q_{3}}}
\left\| g\right\| _{\dot{H}_{p_4,q_4}^{s} }.
\end{equation}
\end{lemma}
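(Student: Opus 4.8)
The plan is to combine a Littlewood--Paley paraproduct decomposition with the fact that the relevant square-function and maximal operators remain bounded on the Lorentz scale $L^{p,q}$ for $1<p<\infty$. Since $\dot H^{0}_{p,q}=L^{p,q}$, the case $s=0$ is exactly the Hölder inequality in Lorentz spaces already recalled above, so from now on I assume $s>0$. First I would fix a homogeneous Littlewood--Paley family $\{P_{j}\}_{j\in\mathbb Z}$, with $P_{j}$ localizing to frequencies $|\xi|\sim 2^{j}$, together with the low-frequency cut-offs $S_{j}=\sum_{k<j}P_{k}$, and record the square-function equivalence
\begin{equation}\nonumber
\left\|F\right\|_{\dot H^{s}_{p,q}}\sim\left\|\Bigl(\sum_{j}2^{2js}\left|P_{j}F\right|^{2}\Bigr)^{1/2}\right\|_{L^{p,q}}.
\end{equation}
This is the one genuinely non-elementary input: on $L^{p}$ it is classical (Mikhlin--Hörmander together with vector-valued Calderón--Zygmund theory), and since every operator involved is bounded on $L^{p}$ for the whole range $1<p<\infty$, real interpolation upgrades it to $L^{p,q}$ for all $1\le q\le\infty$. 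In the same way I would record the scalar bound $\|\mathcal Mg\|_{L^{p,q}}\lesssim\|g\|_{L^{p,q}}$ for the Hardy--Littlewood maximal function $\mathcal M$, and the Fefferman--Stein vector-valued maximal inequality on $L^{p,q}$.

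Next I would split the product by Bony's paraproduct, $fg=\Pi_{f}g+\Pi_{g}f+R(f,g)$, with $\Pi_{g}f=\sum_{j}(S_{j-1}g)(P_{j}f)$, $\Pi_{f}g=\sum_{j}(S_{j-1}f)(P_{j}g)$ and the resonant term $R(f,g)=\sum_{|j-k|\le1}(P_{j}f)(P_{k}g)$. For $\Pi_{g}f$ each summand has frequency $\sim 2^{j}$, so $P_{\ell}(\Pi_{g}f)$ only sees $j\sim\ell$; majorizing $P_{\ell}$ pointwise by $\mathcal M$, using the vector-valued maximal inequality, and pulling out the uniform bound $\sup_{j}|S_{j-1}g|\lesssim\mathcal Mg$ gives
\begin{equation}\nonumber
\left\|\Pi_{g}f\right\|_{\dot H^{s}_{p,q}}\lesssim\left\|\mathcal Mg\cdot\Bigl(\sum_{j}2^{2js}\left|P_{j}f\right|^{2}\Bigr)^{1/2}\right\|_{L^{p,q}}\lesssim\left\|g\right\|_{L^{p_{2},q_{2}}}\left\|f\right\|_{\dot H^{s}_{p_{1},q_{1}}},
\end{equation}
the last step being Hölder in Lorentz spaces with $1/p=1/p_{1}+1/p_{2}$ and $1/q=1/q_{1}+1/q_{2}$. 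By symmetry $\|\Pi_{f}g\|_{\dot H^{s}_{p,q}}\lesssim\|f\|_{L^{p_{3},q_{3}}}\|g\|_{\dot H^{s}_{p_{4},q_{4}}}$. For the resonant term only $k\gtrsim\ell$ contribute to $P_{\ell}R$; writing $2^{\ell s}=2^{(\ell-k)s}2^{ks}$ with $2^{(\ell-k)s}$ summable in $\ell-k\le C$ (this is where $s>0$ enters), Schur's lemma controls the $\ell$-indexed square function by the $k$-indexed one, and bounding the lower-frequency factor by its maximal function lets me absorb $R$ into either term, say $\|R(f,g)\|_{\dot H^{s}_{p,q}}\lesssim\|f\|_{\dot H^{s}_{p_{1},q_{1}}}\|g\|_{L^{p_{2},q_{2}}}$. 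Summing the three pieces yields the claim.

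The main obstacle is precisely the first step: transplanting the Littlewood--Paley square-function equivalence and, above all, the Fefferman--Stein vector-valued maximal inequality from $L^{p}$ to the Lorentz spaces $L^{p,q}$, with extra care at the endpoints $q\in\{1,\infty\}$, where one must interpolate against the weak-type bounds rather than against strong $L^{p}$ estimates. Everything after that reduces to bookkeeping with Hölder's inequality in Lorentz spaces and Schur's lemma. An alternative route avoids paraproducts altogether: starting from the two Lebesgue-space halves of the Kato--Ponce inequality and using the identity $\dot H^{s}_{p,q}=(-\Delta)^{-s/2}L^{p,q}=(\dot H^{s}_{p_{0}},\dot H^{s}_{p_{1}})_{\theta,q}$, bilinear real interpolation produces each term of the right-hand side directly; this is formally cleaner but requires verifying the admissibility hypotheses of the bilinear interpolation theorem for the exponent relation $1/q=1/q_{1}+1/q_{2}$, so I would favour the self-contained paraproduct argument above.
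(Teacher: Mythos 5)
The paper does not prove this lemma at all: it is imported verbatim from the reference [CN16] (Cruz-Uribe--Naibo), where the Kato--Ponce inequality is established on weighted and variable Lebesgue spaces and the Lorentz version is extracted from there. So there is no in-paper argument to measure your proof against; what you have written is a self-contained reconstruction, and it follows the standard route. Your sketch is essentially sound. The reduction of the case $s=0$ to O'Neil's H\"older inequality is correct; the treatment of $\Pi_{g}f$ via $\sup_{j}|S_{j-1}g|\lesssim\mathcal Mg$, the Fefferman--Stein inequality, and Lorentz H\"older with $1/p=1/p_{1}+1/p_{2}$, $1/q=1/q_{1}+1/q_{2}$ is the usual argument; and the resonant term is correctly handled by the $2^{(\ell-k)s}$ gain, which is exactly where $s>0$ is needed. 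You also correctly identify the only genuinely nontrivial input, namely transplanting the square-function characterization of $\dot H^{s}_{p,q}$ and the vector-valued maximal inequality to $L^{p,q}(\ell^{2})$. Your justification there is legitimate but stated a bit casually: since $(L^{p_{0}},L^{p_{1}})_{\theta,q}=L^{p,q}$ for \emph{all} $1\le q\le\infty$ once $1<p_{0}<p<p_{1}<\infty$, and the operators in question are linear (the Littlewood--Paley synthesis/analysis maps) or sublinear in the $\ell^{2}$-valued sense (the Fefferman--Stein operator), real interpolation of their strong $L^{p_{0}}$ and $L^{p_{1}}$ bounds already covers the endpoints $q\in\{1,\infty\}$; no separate weak-type argument is needed, contrary to what your closing remark suggests. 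Two small points you should make explicit in a full write-up: the two-sided square-function equivalence requires interpolating both the analysis map $F\mapsto\{2^{js}P_{j}F\}$ and a synthesis map $\{F_{j}\}\mapsto\sum_{j}2^{-js}\tilde P_{j}F_{j}$ separately, and for the resonant term the product $(P_{j}f)(P_{k}g)$ is supported in $|\xi|\lesssim 2^{\max(j,k)}$, which is what restricts the sum in $P_{\ell}R$ to $k\ge\ell-C$. With those details filled in, the argument is complete.
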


\begin{lemma}[Fractional chain rule in Sobolev-Lorentz spaces, \cite{AT21}]\label{lem 2.9.}
Suppose $F\in C^{1} (\mathbb C\to \mathbb C)$ and $0< s\le1$. Then for $1<p,\;p_{1},\;p_{2} <\infty $ and $1\le q,\;q_{1},\; q_{2}< \infty $ satisfying
$$
\frac{1}{p} =\frac{1}{p_{1} } +\frac{1}{p_{2} },~\frac{1}{q} =\frac{1}{q_{1} } +\frac{1}{q_{2} },
$$
we have
\begin{equation} \nonumber
\left\|F(u)\right\| _{\dot{H}_{p,q}^{s}} \lesssim \left\| F'(u)\right\| _{L^{p_{1},q_{1}}} \left\| u\right\| _{\dot{H}_{p_2,q_2}^{s}}.
\end{equation}
\end{lemma}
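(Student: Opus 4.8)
The plan is to reduce to the genuinely fractional range $0<s<1$ and then to run the Christ--Weinstein argument, recast in terms of a difference square function whose mapping properties survive the passage from Lebesgue to Lorentz spaces. First I would dispatch the endpoint $s=1$. By Lemma \ref{lem 2.2.} (with $[s]=1$, $v=0$) one has $\|F(u)\|_{\dot H^1_{p,q}}\sim\sum_{|\alpha|=1}\|D^\alpha F(u)\|_{\dot H^0_{p,q}}=\sum_{|\alpha|=1}\|D^\alpha F(u)\|_{L^{p,q}}$, and since $F\in C^1$ the ordinary chain rule gives $D^\alpha F(u)=F'(u)\,D^\alpha u$ pointwise. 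Hölder's inequality in Lorentz spaces (Section \ref{sec 2.}) with the given relations $1/p=1/p_1+1/p_2$ and $1/q=1/q_1+1/q_2$ then yields $\|D^\alpha F(u)\|_{L^{p,q}}\lesssim\|F'(u)\|_{L^{p_1,q_1}}\|D^\alpha u\|_{L^{p_2,q_2}}$, and summing recovers the claim. It remains to treat $0<s<1$.

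For $0<s<1$ the key tool is the difference (Strichartz) characterization of the Riesz potential space: for $1<r<\infty$ one has $\|g\|_{\dot H^s_r}\sim\|D^s g\|_{L^r}$, where $D^s g(x):=\bigl(\int_{\mathbb R^d}|g(x+y)-g(x)|^2|y|^{-d-2s}\,dy\bigr)^{1/2}$. I would first upgrade this equivalence to the Lorentz scale, namely $\|g\|_{\dot H^s_{p,q}}\sim\|D^s g\|_{L^{p,q}}$, by real interpolation: since $(-\Delta)^{s/2}$ intertwines $\dot H^s_{r}$ with $L^{r}$, real interpolation of two Lebesgue endpoints gives both $\dot H^s_{p,q}=(\dot H^s_{p_0},\dot H^s_{p_1})_{\theta,q}$ and $L^{p,q}=(L^{p_0},L^{p_1})_{\theta,q}$, and the sublinear operator $g\mapsto D^s g$ together with the reverse comparison is bounded on each Lebesgue endpoint, hence on the interpolated Lorentz spaces. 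With this in hand the chain rule reduces to estimating $\|D^s[F(u)]\|_{L^{p,q}}$.

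The heart of the matter, and the step I expect to be the main obstacle, is the pointwise bound
\[
D^s[F(u)](x)\ \lesssim\ \bigl[M\bigl(|F'(u)|^{r}\bigr)(x)\bigr]^{1/r}\,D^s u(x),
\]
valid for any fixed $r$ with $1<r<p_1$, where $M$ is the Hardy--Littlewood maximal operator. Writing $F(u(x+y))-F(u(x))=\int_0^1 F'\bigl(u(x)+\theta(u(x+y)-u(x))\bigr)\,d\theta\cdot(u(x+y)-u(x))$ and inserting this into the definition of $D^s[F(u)](x)$, the difficulty is that $F$ is merely $C^1$: the factor $F'$ is evaluated along the whole segment joining $u(x)$ to $u(x+y)$ rather than at the base point $u(x)$, so the naive bound by $\|F'\|_{L^\infty}$ is far too lossy. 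Following Christ--Weinstein I would split the $y$-integral according to whether the increment $|u(x+y)-u(x)|$ is small, where $F'$ along the segment is comparable to $F'(u(x))$ and the contribution is controlled by $|F'(u(x))|\,D^s u(x)$, or large, where one pays a power of the increment but gains summability from $s>0$; the large-increment part is precisely what is absorbed by the powered maximal function $[M(|F'(u)|^{r})]^{1/r}$, the room $r>1$ being available exactly because $p_1>1$.

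Finally I would take $L^{p,q}$ norms of the pointwise bound and apply Hölder in Lorentz spaces with exponent pairs $(p_1,q_1)$ and $(p_2,q_2)$, together with the boundedness of the powered maximal operator $g\mapsto[M(|g|^{r})]^{1/r}$ on $L^{p_1,q_1}$, which follows from $1<r<p_1$ via the identity $\||h|^{r}\|_{L^{p,q}}=\|h\|_{L^{pr,qr}}^{r}$ (Section \ref{sec 2.}) and the boundedness of $M$ on $L^{p_1/r,q_1/r}$ with $p_1/r>1$. This gives
\[
\|D^s[F(u)]\|_{L^{p,q}}\lesssim\bigl\|[M(|F'(u)|^{r})]^{1/r}\bigr\|_{L^{p_1,q_1}}\,\|D^s u\|_{L^{p_2,q_2}}\lesssim\|F'(u)\|_{L^{p_1,q_1}}\,\|u\|_{\dot H^s_{p_2,q_2}},
\]
where the last step uses the Lorentz difference characterization again. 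Since the only Lorentz-specific ingredients are Hölder's inequality and the maximal-function bounds on $L^{p,q}$ for $1<p<\infty$, which are already recorded in Section \ref{sec 2.}, the genuinely new work beyond the classical Lebesgue proof is confined to the interpolation step and to verifying that the pointwise scale-splitting estimate passes through unchanged.
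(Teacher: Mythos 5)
The paper contains no proof of Lemma \ref{lem 2.9.}: it is imported from \cite{AT21}, where it is obtained by adapting the classical Christ--Weinstein Littlewood--Paley argument to Lorentz norms. Judged on its own terms, your argument has a fatal gap at exactly the step you yourself call the heart of the matter: the pointwise inequality $D^s[F(u)](x)\lesssim [M(|F'(u)|^{r})(x)]^{1/r}\,D^s u(x)$ is neither proved nor provable by the mechanism you sketch. In the ``small increment'' regime, continuity of $F'$ only says that $F'$ along the segment converges to $F'(u(x))$ in a non-uniform, non-quantitative way; it never yields a multiplicative comparison $|F'(\mathrm{segment})|\lesssim|F'(u(x))|$ (this fails outright wherever $F'(u(x))=0$ but $F'$ is nonzero nearby). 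In the ``large increment'' regime there is no ``power of the increment'' to pay: for $F$ merely $C^{1}$ the only available bound is $\sup_{\mathrm{segment}}|F'|\cdot|u(x+y)-u(x)|$, and the segment joining $u(x)$ to $u(x+y)$ in $\mathbb C$ consists of complex values that $u$ need not attain anywhere, so $\sup_{\mathrm{segment}}|F'|$ is not controlled by any maximal function of $x\mapsto F'(u(x))$. This is precisely why Christ--Weinstein (and every careful statement of the fractional chain rule) imposes the segment condition $|F'(\theta z_{1}+(1-\theta)z_{2})|\lesssim|F'(z_{1})|+|F'(z_{2})|$, automatic for $F(u)=|u|^{\sigma}u$ but omitted in the loose quotation of the lemma here; and even granting that condition one only reaches $|F(u(x+y))-F(u(x))|\lesssim(|F'(u(x))|+|F'(u(x+y))|)\,|u(x+y)-u(x)|$, whose cross term involving $|F'(u(x+y))|$ sits inside the $y$-integral and cannot be extracted as $[M(|F'(u)|^{r})(x)]^{1/r}D^su(x)$ by any pointwise manipulation. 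The known proofs circumvent this by frequency localization (paraproducts), where the mean value theorem plus maximal-function estimates legitimately produce a product of a maximal function and a square function evaluated at the same point; it is that argument which then transfers to Lorentz norms.

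There is a second, independent gap. The Strichartz difference characterization is not valid on the range you claim: the bound $\|D^sg\|_{L^{r}}\lesssim\|g\|_{\dot H^{s}_{r}}$ holds only for $r>\frac{2d}{d+2s}$, and it genuinely fails below this threshold (already for a Schwartz function $g$ one has $D^sg(x)\sim|x|^{-(d+2s)/2}$ as $|x|\to\infty$, which lies in $L^{r}$ only for $r>\frac{2d}{d+2s}$). Hence your final step $\|D^su\|_{L^{p_{2},q_{2}}}\lesssim\|u\|_{\dot H^{s}_{p_{2},q_{2}}}$ cannot cover the full range $1<p_{2}<\infty$ of the lemma, and the reverse bound you need at the exponent $p$ carries a dual restriction. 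Moreover, your interpolation step only makes sense for the forward direction: $g\mapsto D^sg$ is sublinear, so real interpolation applies to it, but ``the reverse comparison'' is not an operator bound and does not interpolate as stated; it must be recovered by duality/polarization from the forward bound at the dual exponent, which reintroduces the same exponent restrictions. (Two minor points: maximal-function bounds on $L^{p,q}$ are not actually recorded in Section \ref{sec 2.}, though they do follow by real interpolation; your $s=1$ case and the concluding H\"older/maximal steps are fine.) The viable repair is not to patch the pointwise estimate but to follow \cite{AT21}: rerun the Littlewood--Paley proof under the segment hypothesis on $F'$ and take Lorentz norms of the resulting maximal and square functions at the last step.
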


Lemma \ref{lem 2.9.} which holds for $0<s\le 1$ was then extended to $s>0$ by \cite{AKR24}.
\begin{lemma}[\cite{AKR24}]\label{lem 2.10.}
Let $s>0$ and $F\in C^{\left\lceil s\right\rceil } \left(\mathbb C\to \mathbb C\right)$. Then for $1<p,p_{1_k},p_{2_k},p_{3_k}<\infty$ and $1\le q,q_{1_k},q_{2_k},q_{3_k}<\infty$ satisfying
\begin{equation} \nonumber
\frac{1}{p}=\frac{1}{p_{1_k}}+\frac{1}{p_{2_k}}+\frac{k-1}{p_{3_k}},
~\frac{1}{q}=\frac{1}{q_{1_k}}+\frac{1}{q_{2_k}}+\frac{k-1}{q_{3_k}},~k=1,2,\ldots, \lceil s\rceil,
\end{equation}
we have
\begin{equation} \nonumber
\left\|F(u)\right\| _{\dot{H}_{p,q}^{s}} \lesssim \sum_{k=1}^{\lceil s\rceil}\left\| F^{(k)}(u)\right\| _{L^{p_{1_k},q_{1_k}}} \left\|u\right\|_{\dot{H}_{p_{2_k},q_{2_k}}^s}\left\|u\right\|_{L^{p_{3_k},q_{3_k}}}^{k-1}.
\end{equation}
\end{lemma}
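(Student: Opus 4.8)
The plan is to reduce the fractional-order estimate to the integer case and the already-available first-order rules, and then to collapse the resulting Sobolev--Lorentz norms by a Gagliardo--Nirenberg interpolation. First I would write $s=[s]+v$ with $v=s-[s]\in[0,1)$ and invoke Lemma \ref{lem 2.2.} to replace the fractional norm of $F(u)$ by integer derivatives,
\[
\left\|F(u)\right\|_{\dot{H}^{s}_{p,q}}\sim\sum_{|\alpha|=[s]}\left\|D^{\alpha}\big(F(u)\big)\right\|_{\dot{H}^{v}_{p,q}}.
\]
Next I would expand each $D^{\alpha}(F(u))$ by the Fa\`a di Bruno formula, so that $D^{\alpha}(F(u))$ is a finite linear combination of terms $F^{(k)}(u)\prod_{j=1}^{k}D^{\gamma_{j}}u$ with $1\le k\le[s]$, each $|\gamma_{j}|\ge1$ and $\sum_{j=1}^{k}|\gamma_{j}|=[s]$. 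The whole estimate then reduces to bounding each such term in $\dot{H}^{v}_{p,q}$.

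When $s$ is an integer (so $v=0$) I would apply an iterated H\"older inequality in Lorentz spaces to peel off $\|F^{(k)}(u)\|_{L^{p_{1_k},q_{1_k}}}$ and a product $\prod_{j}\|u\|_{\dot{H}^{|\gamma_{j}|}_{r_{j},\rho_{j}}}$. Since $\sum_{j}|\gamma_{j}|=s$ and there are $k$ factors, a Gagliardo--Nirenberg interpolation in the Lorentz scale, $\|u\|_{\dot H^{|\gamma_j|}_{r_j,\rho_j}}\lesssim\|u\|_{\dot H^{s}_{p_{2_k},q_{2_k}}}^{|\gamma_j|/s}\|u\|_{L^{p_{3_k},q_{3_k}}}^{1-|\gamma_j|/s}$, multiplies to $\|u\|_{\dot H^{s}_{p_{2_k},q_{2_k}}}\|u\|_{L^{p_{3_k},q_{3_k}}}^{k-1}$, which is exactly the $k$-th summand. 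A direct scaling count shows that the exponents forced by this step satisfy $\tfrac1p=\tfrac1{p_{1_k}}+\tfrac1{p_{2_k}}+\tfrac{k-1}{p_{3_k}}$ and likewise for $q$, matching the hypothesis.

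When $v\in(0,1)$ I would instead distribute the $\dot{H}^{v}$ derivative across $F^{(k)}(u)\prod_{j}D^{\gamma_{j}}u$ by iterating the fractional product rule of Lemma \ref{lem 2.7.}, which leaves two kinds of terms. If $\dot H^v$ falls on some $D^{\gamma_{j_0}}u$ it becomes $\|u\|_{\dot H^{v+|\gamma_{j_0}|}}$; the derivative orders of the $k$ factors of $u$ still sum to $v+[s]=s$, so Gagliardo--Nirenberg again produces the $k$-th summand $\|F^{(k)}(u)\|_{L}\|u\|_{\dot H^{s}}\|u\|_{L}^{k-1}$. If instead $\dot H^v$ falls on $F^{(k)}(u)$, I would apply the first-order fractional chain rule (Lemma \ref{lem 2.9.}) with exponent $v$ to get $\|F^{(k)}(u)\|_{\dot H^{v}}\lesssim\|F^{(k+1)}(u)\|_{L}\|u\|_{\dot H^{v}}$; this raises the order of the derivative of $F$ to $k+1$ and introduces an extra factor $\|u\|_{\dot H^{v}}$, so the $u$-factors now number $k+1$ with total derivative order $v+[s]=s$, and Gagliardo--Nirenberg collapses them into $\|u\|_{\dot H^{s}}\|u\|_{L}^{k}$, i.e. the $(k+1)$-th summand. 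Because $k\le[s]$, the index $k+1$ ranges up to $[s]+1=\lceil s\rceil$, accounting for the full sum.

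The Fa\`a di Bruno bookkeeping is routine; the delicate points are twofold. First, I must verify that the Lorentz exponents $(r_j,\rho_j)$ generated by H\"older and by the fractional product rule can be chosen to satisfy simultaneously the Sobolev (scaling) relations demanded by the Gagliardo--Nirenberg step and the summation constraints $\tfrac1p=\sum\tfrac1{p_{\cdot}}$, $\tfrac1q=\sum\tfrac1{q_{\cdot}}$ of the hypothesis; this is a dimension count that hinges on $\sum_{j}|\gamma_{j}|=[s]$ and works out precisely as in the integer case above. Second, and this I expect to be the main obstacle, the Gagliardo--Nirenberg interpolation must be available in the Lorentz scale with a free secondary index, which I would establish from the embeddings of Lemma \ref{lem 2.4.} and Corollary \ref{cor 2.6.} together with real interpolation, holding the second Lorentz index fixed throughout so that it survives the interpolation with the required flexibility.
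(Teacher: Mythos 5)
The paper itself gives no proof of Lemma \ref{lem 2.10.}: it is imported verbatim from \cite{AKR24}, so the comparison is against the argument one expects there. Your architecture is exactly that standard argument, and its skeleton is sound: reduce by Lemma \ref{lem 2.2.} to $\sum_{|\alpha|=[s]}\|D^{\alpha}(F(u))\|_{\dot H^{v}_{p,q}}$ with $v=s-[s]$, expand by Fa\`a di Bruno, distribute the residual $\dot H^{v}$-derivative by iterating Lemma \ref{lem 2.7.}, treat the term where it lands on $F^{(k)}(u)$ by the first-order chain rule (Lemma \ref{lem 2.9.}, legitimate since $k+1\le[s]+1=\lceil s\rceil$ for non-integer $s$, and this case is vacuous for integer $s$), and collapse the resulting $k$ or $k+1$ factors of $u$, whose derivative orders sum to $s$, by interpolation. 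Your exponent bookkeeping also closes: the convex-combination exponents $\tfrac{1}{r_j}=\tfrac{\theta_j}{p_{2_k}}+\tfrac{1-\theta_j}{p_{3_k}}$ sum to $\tfrac{1}{p_{2_k}}+\tfrac{k-1}{p_{3_k}}$ because $\sum_j\theta_j=1$, reproducing the hypothesis, and the indices produced (case (a) gives summand $k$, case (b) gives summand $k+1$) fill out $1,\ldots,\lceil s\rceil$. You should, however, state the degenerate range $0<s\le 1$ separately, where $[s]=0$, the Fa\`a di Bruno sum is empty, and the lemma \emph{is} Lemma \ref{lem 2.9.}.

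The one genuine defect is in the step you yourself flag as the main obstacle. The Gagliardo--Nirenberg/convexity inequality you need,
\begin{equation}\nonumber
\left\|u\right\|_{\dot H^{\theta s}_{r,\rho}}\lesssim \left\|u\right\|_{\dot H^{s}_{p_{2_k},q_{2_k}}}^{\theta}\left\|u\right\|_{L^{p_{3_k},q_{3_k}}}^{1-\theta},
\end{equation}
is not something to be rebuilt from Lemma \ref{lem 2.4.}, Corollary \ref{cor 2.6.} and real interpolation: it is precisely Lemma \ref{lem 2.12.} of the paper (the convexity H\"older inequality in Sobolev--Lorentz spaces, taken from the same reference \cite{AKR24}), and you should simply invoke it. Moreover, the derivation you sketch would not go through as described: real interpolation between $\dot H^{s}_{p_2,q_2}$ and $L^{p_3,q_3}$ produces Besov--Lorentz-type spaces, and the elementary $K$-functional bound only controls the $(\theta,\infty)$-interpolation norm, i.e.\ a Besov norm with third index $\infty$, which does \emph{not} embed into the Sobolev-type space $\dot H^{\theta s}_{r,\rho}$ (the embedding runs the other way); multiplicative inequalities of this kind require a complex-interpolation or paraproduct argument, which is what \cite{AKR24} supplies. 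With Lemma \ref{lem 2.12.} cited in place of that paragraph, your proof is correct and is, in all essentials, the proof of the cited reference.
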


As an immediate consequence of Lemma \ref{lem 2.10.}, we have the following useful nonlinear estimates in Sobolev-Lorentz spaces $\dot{H}_{p,q}^{s}$ with $s\ge 0$.
\begin{corollary}[\cite{AKR24}]\label{cor 2.11.}
Let $s\ge 0$ and $\sigma >0$. If $\sigma$ is not an even integer, assume further $\sigma >\lceil s\rceil -1$. Then for $1<p,p_{1},p_{2}<\infty$ and $1\le q,q_{1},q_{2}<\infty$ satisfying
\begin{equation} \nonumber
\frac{1}{p}=\frac{\sigma}{p_{1}}+\frac{1}{p_{2}},~\frac{1}{q}=\frac{\sigma}{q_{1}}+\frac{1}{q_{2}},
\end{equation}
we have
\begin{equation} \nonumber
\left\||u|^{\sigma}u\right\| _{\dot{H}_{p,q}^{s}} \lesssim \left\|u\right\| _{L^{p_{1},q_{1}}}^{\sigma} \left\|u\right\|_{\dot{H}_{p_{2},q_{2}}^{s}}.
\end{equation}
\end{corollary}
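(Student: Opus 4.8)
The plan is to apply the fractional chain rule of Lemma~\ref{lem 2.10.} to the nonlinearity $F(z)=|z|^{\sigma}z$, $z\in\mathbb C$, and then to collapse every resulting term back to the right-hand side by homogeneity. First I would check that $F\in C^{\lceil s\rceil}(\mathbb C\to\mathbb C)$, which is the standing hypothesis of Lemma~\ref{lem 2.10.}. When $\sigma$ is an even integer, $|z|^{\sigma}z=z^{\sigma/2+1}\bar z^{\sigma/2}$ is a polynomial in $z,\bar z$ and thus $F\in C^{\infty}$. When $\sigma$ is not an even integer, each derivative $F^{(k)}$ is (positively) homogeneous of degree $\sigma+1-k$, so the extra assumption $\sigma>\lceil s\rceil-1$ gives $\sigma+1-\lceil s\rceil>0$, which is exactly what is needed for $F^{(\lceil s\rceil)}$ to be continuous at the origin. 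In both cases one records the pointwise bound $|F^{(k)}(z)|\lesssim|z|^{\sigma+1-k}$ for $1\le k\le\lceil s\rceil$.

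Next I would feed $F$ into Lemma~\ref{lem 2.10.} with the exponent choice $p_{2_k}=p_2$, $q_{2_k}=q_2$ (these carry the $\dot H^{s}$ norm), $p_{1_k}=\frac{p_1}{\sigma+1-k}$, $q_{1_k}=\frac{q_1}{\sigma+1-k}$ (these receive the factor $F^{(k)}(u)$, which by the bound above contributes $\sigma+1-k$ copies of $u$), and $p_{3_k}=p_1$, $q_{3_k}=q_1$ (these receive the remaining $k-1$ copies). With this choice the index identities required by Lemma~\ref{lem 2.10.} reduce to the corollary's hypotheses, since $\frac{\sigma+1-k}{p_1}+\frac{k-1}{p_1}=\frac{\sigma}{p_1}$ and likewise for the $q$'s. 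Admissibility of the exponents is automatic: because $p,q>1$ and $\frac1{p_2},\frac1{q_2}>0$, the relations force $\frac{\sigma}{p_1}<\frac1p<1$ and $\frac{\sigma}{q_1}<\frac1q\le1$, hence $p_1,q_1>\sigma\ge\sigma+1-k$, so that $p_{1_k},q_{1_k}>1$; and they are finite because $\sigma+1-k>0$. Applying the homogeneity identity $\||f|^{r}\|_{L^{p,q}}=\|f\|_{L^{pr,qr}}^{r}$ with $r=\sigma+1-k$ then turns $\|F^{(k)}(u)\|_{L^{p_{1_k},q_{1_k}}}$ into $\|u\|_{L^{p_1,q_1}}^{\sigma+1-k}$, so each summand becomes $\|u\|_{L^{p_1,q_1}}^{\sigma+1-k}\,\|u\|_{\dot H^{s}_{p_2,q_2}}\,\|u\|_{L^{p_1,q_1}}^{k-1}=\|u\|_{L^{p_1,q_1}}^{\sigma}\,\|u\|_{\dot H^{s}_{p_2,q_2}}$, and summing the finitely many indices $k=1,\dots,\lceil s\rceil$ yields the claim.

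I expect the only delicate point to be the bookkeeping at the extreme values of $k$. In the non-even case the hypothesis $\sigma>\lceil s\rceil-1$ is precisely what keeps $\sigma+1-k>0$ throughout the sum, so that all the $p_{1_k},q_{1_k}$ remain finite and $F$ has the required regularity; this is where that assumption is genuinely used. In the even-integer case, where no such restriction is imposed, the term $k=\sigma+1$ (arising when $\lceil s\rceil\ge\sigma+1$) is borderline, since $F^{(\sigma+1)}$ is then a nonzero constant of degree $\sigma+1-k=0$. I would treat this term by placing $F^{(\sigma+1)}(u)$ in $L^{\infty}$ — the limiting case $p_{1_k}=q_{1_k}=\infty$ of the above choice, for which the index identity still reads $\frac1p=\frac1{p_2}+\frac{\sigma}{p_1}$ — while derivatives of order $k>\sigma+1$ vanish identically and contribute nothing. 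Apart from this endpoint accounting, the proof is a direct substitution into Lemma~\ref{lem 2.10.}.
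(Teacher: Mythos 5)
Your proposal is correct and follows exactly the route the paper intends: the paper presents Corollary \ref{cor 2.11.} as an ``immediate consequence'' of the fractional chain rule of Lemma \ref{lem 2.10.}, obtained by taking $F(z)=|z|^{\sigma}z$, using $|F^{(k)}(z)|\lesssim|z|^{\sigma+1-k}$, and choosing the exponents precisely as you do so that the homogeneity identity $\bigl\||f|^{r}\bigr\|_{L^{p,q}}=\|f\|_{L^{pr,qr}}^{r}$ collapses each summand to $\|u\|_{L^{p_{1},q_{1}}}^{\sigma}\|u\|_{\dot H^{s}_{p_{2},q_{2}}}$. Your extra bookkeeping at the endpoints (the vanishing higher derivatives and the constant term $F^{(\sigma+1)}$ in the even-integer case, which falls outside the literal range $1<p_{1_k}<\infty$ of Lemma \ref{lem 2.10.} and is handled via the $L^{\infty}$ limit) is more explicit than anything in the paper and is a welcome, not a divergent, addition.
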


Finally, we recall the interpolation inequality in Sobolev-Lorentz spaces.

\begin{lemma}[Convexity H\"{o}lder's inequality in Sobolev-Lorentz spaces, \cite{AKR24}]\label{lem 2.12.}
Let $1<p,\;p_{i} <\infty $, $1\le q,\;q_{i} <\infty $, $0\le \theta _{i} \le 1$, $s\ge 0$, $s_{i}\ge 0\; (i=1,\;\ldots ,\;N)$, $\sum _{i=1}^{N}\theta _{i} =1$, $s=\sum _{i=1}^{N}\theta _{i}  s_{i} $, $1/p=\sum _{i=1}^{N}{\theta _{i}/p_{i}}$ and $1/q=\sum _{i=1}^{N}{\theta _{i}/q_{i}}$.
Then we have $\bigcap _{i=1}^{N}\dot{H}_{p_{i},q_{i} }^{s_{i} }  \subset \dot{H}_{p,q}^{s} $ and for any $f\in \bigcap _{i=1}^{N}\dot{H}_{p_{i},q_{i} }^{s_{i} }  $,
\begin{equation} \nonumber
\left\|f\right\| _{\dot{H}_{p,q}^{s} } \le \prod _{i=1}^{N}\left\| f\right\| _{\dot{H}_{p_{i},q_{i} }^{s_{i} } }^{\theta _{i} }  .
\end{equation}
\end{lemma}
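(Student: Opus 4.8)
The plan is to reduce the general $N$-fold inequality to the bilinear case $N=2$ by induction, and then to prove the $N=2$ case by a complex (three-lines) interpolation argument applied to a duality pairing, which produces the multiplicative bound with constant $1$.

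First I would carry out the reduction. Assuming the result for two factors, let $\theta:=\sum_{i=1}^{N-1}\theta_i=1-\theta_N$ (the case $\theta=0$ being trivial), and set $\tilde s:=\sum_{i=1}^{N-1}(\theta_i/\theta)s_i$, $1/\tilde p:=\sum_{i=1}^{N-1}(\theta_i/\theta)/p_i$ and $1/\tilde q:=\sum_{i=1}^{N-1}(\theta_i/\theta)/q_i$. Since the weights $\theta_i/\theta$ sum to $1$, the induction hypothesis for $N-1$ factors gives $\|f\|_{\dot H^{\tilde s}_{\tilde p,\tilde q}}\le \prod_{i=1}^{N-1}\|f\|_{\dot H^{s_i}_{p_i,q_i}}^{\theta_i/\theta}$. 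Because $\theta\tilde s+\theta_N s_N=s$, $\theta/\tilde p+\theta_N/p_N=1/p$ and $\theta/\tilde q+\theta_N/q_N=1/q$, the two-factor case applied to the pair $(\tilde s,\tilde p,\tilde q)$ and $(s_N,p_N,q_N)$ with weights $\theta,\theta_N$ then yields $\|f\|_{\dot H^{s}_{p,q}}\le \|f\|_{\dot H^{\tilde s}_{\tilde p,\tilde q}}^{\theta}\|f\|_{\dot H^{s_N}_{p_N,q_N}}^{\theta_N}$, and combining the two displays closes the induction. Thus it suffices to treat $N=2$; write $\theta:=\theta_2$, so $\theta_1=1-\theta$.

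For the bilinear case I would argue by the three-lines lemma, which is essentially Calder\'on--Stein complex interpolation carried out by hand so as to keep the constant equal to $1$. Fix $f$ (first a Schwartz function, the general case following by density in $\mathscr S'/\mathscr P$), let $u:=(-\Delta)^{s/2}f$, and recall that $\|f\|_{\dot H^{s}_{p,q}}=\|u\|_{L^{p,q}}$ can be recovered, via duality of Lorentz spaces, as a supremum of $\bigl|\int_{\mathbb R^d} u\bar g\,dx\bigr|$ over unit-norm $g\in L^{p',q'}$. On the strip $\{0\le \operatorname{Re}z\le 1\}$ set $s(z):=(1-z)s_1+zs_2$, $1/p(z):=(1-z)/p_1+z/p_2$ and $1/q(z):=(1-z)/q_1+z/q_2$, so that $s(\theta)=s$, $p(\theta)=p$, $q(\theta)=q$. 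Given a unit-norm dual function $g$, I would form the analytic family $G_z$ obtained from $g$ by the usual modulus-power substitution that transports it between the spaces $L^{p'(z),q'(z)}$, and define $\Phi(z):=\int_{\mathbb R^d}(-\Delta)^{s(z)/2}f\cdot \overline{G_z}\,dx$. Then $\Phi$ is analytic and bounded on the strip, and on the two edges the H\"older inequality in Lorentz spaces---which holds with constant $1$ through the Hardy--Littlewood rearrangement bound $\int_{\mathbb R^d}|hG|\,dx\le\int_0^\infty h^*(t)G^*(t)\,dt$---gives $|\Phi(it)|\le\|f\|_{\dot H^{s_1}_{p_1,q_1}}$ and $|\Phi(1+it)|\le\|f\|_{\dot H^{s_2}_{p_2,q_2}}$, since $\|G_{it}\|_{L^{p_1',q_1'}}$ and $\|G_{1+it}\|_{L^{p_2',q_2'}}$ are $\le 1$ by construction. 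The three-lines lemma then yields $|\Phi(\theta)|\le \|f\|_{\dot H^{s_1}_{p_1,q_1}}^{1-\theta}\|f\|_{\dot H^{s_2}_{p_2,q_2}}^{\theta}$, and taking the supremum over $g$ gives the desired bound for $\|f\|_{\dot H^{s}_{p,q}}$. In particular, finiteness of the right-hand side forces $f\in \dot H^{s}_{p,q}$, which is the claimed inclusion $\bigcap_i\dot H^{s_i}_{p_i,q_i}\subset\dot H^{s}_{p,q}$.

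The hard part will be the correct construction and control of the analytic family $G_z$, because here the differentiation order $s(z)$ and both Lorentz exponents $p(z),q(z)$ vary simultaneously with $z$; one must check that $G_z$ stays suitably analytic and that its boundary Lorentz (quasi-)norms are genuinely $\le 1$, which is delicate since the Lorentz quasi-norm is not a true norm when $q\neq p$ and the duality $\|u\|_{L^{p,q}}=\sup|\langle u,g\rangle|$ must be handled with care (in particular at the endpoint $q=1$, where the dual is $L^{p',\infty}$). Two further routine but necessary technical points are the density/approximation step justifying the three-lines argument for general $f$ in the homogeneous space $\mathscr S'/\mathscr P$ (handling the frequency-zero behaviour), and the verification that $\Phi$ is bounded and analytic on the closed strip so that the three-lines lemma applies. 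Once these are in place, the constant-$1$ conclusion follows because every ingredient---H\"older in Lorentz via rearrangement and the three-lines lemma---carries constant $1$.
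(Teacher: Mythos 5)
This paper does not prove Lemma \ref{lem 2.12.} at all: the statement is quoted from \cite{AKR24}, so there is no in-paper argument to compare against, and your proposal has to stand on its own. It does not close. The reduction to $N=2$ is correct, but the bilinear case --- which is the entire content --- is left open exactly where the difficulty sits, and the construction you gesture at provably cannot work. Your plan needs an analytic family $G_z$ with $G_\theta=g$ and $\|G_{it}\|_{L^{p_1',q_1'}}\le 1$, $\|G_{1+it}\|_{L^{p_2',q_2'}}\le 1$, obtained by ``the usual modulus-power substitution'', i.e. $G_z=|g|^{\alpha(z)}\operatorname{sgn} g$ with $\alpha$ affine with real coefficients. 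Then $|G_{j+it}|=|g|^{\alpha(j)}$ on the line $\operatorname{Re} z=j$, and by the identity $\||g|^{\alpha}\|_{L^{r,\sigma}}=\|g\|_{L^{\alpha r,\alpha\sigma}}^{\alpha}$ the required boundary bound forces the single number $\alpha(j)$ to satisfy both $\alpha(j)\,p_j'=p'$ and $\alpha(j)\,q_j'=q'$, i.e. $p'/p_j'=q'/q_j'$. This ratio condition fails in general under the hypotheses of the lemma: take $p_1=p_2=p$ and $q_1\neq q_2$, so that $p'/p_j'=1$ while $q'/q_j'\neq 1$ for at least one $j$. (Replacing the exact match by an embedding $L^{p',q'}\hookrightarrow L^{p',\sigma}$, $\sigma\ge q'$, does not help either, since that embedding has norm $1$ only when $q'\le p'$.) A single power of $|g|$ cannot transport two independently varying Lorentz indices; no substitute construction is offered, and without one there is no proof even with an unspecified constant. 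A genuine repair would invoke the complex-interpolation identity $[L^{p_1',q_1'},L^{p_2',q_2'}]_\theta=L^{p',q'}$ together with Stein interpolation for analytic families, but that identity holds with equivalence of norms only, so it yields the lemma with a constant $C>1$ rather than the stated constant $1$.

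The second gap is the duality step on which your constant-$1$ claim rests. With the $f^*$-based quasi-norm used in this paper, the inequality $\sup\{\,|\int u\bar g\,dx| : \|g\|_{L^{p',q'}}\le 1\}\le\|u\|_{L^{p,q}}$ does hold with constant $1$ (Hardy--Littlewood plus H\"older on $((0,\infty),dt/t)$), but the reverse inequality --- the one you actually need --- fails when $q>p$: equality in that chain requires $t^{1/p'}g^*(t)=c\,\bigl(t^{1/p}u^*(t)\bigr)^{q-1}$, i.e. $g^*(t)=c\,t^{(q-p)/p}\,u^*(t)^{q-1}$, which for $u=\chi_E$ and $q>p$ is \emph{increasing} on the support of $u^*$ and hence is not the decreasing rearrangement of any admissible $g$; a short optimization over admissible non-increasing profiles shows the supremum is then strictly smaller than $\|u\|_{L^{p,q}}$. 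Indeed, for $q>p$ the quasi-norm fails the triangle inequality, so it cannot coincide with any supremum of moduli of linear functionals. Consequently, even granting the analytic family, your route delivers $\|f\|_{\dot{H}^{s}_{p,q}}\le C\prod_i\|f\|_{\dot{H}^{s_i}_{p_i,q_i}}^{\theta_i}$ with $C=C(p,q,p_i,q_i)>1$, not the stated bound. For orientation on where the real difficulty lies: when $s_1=\dots=s_N$ the lemma is elementary with constant $1$, since with $h=(-\Delta)^{s/2}f$ one has the pointwise identity $t^{1/p}h^*(t)=\prod_i\bigl(t^{1/p_i}h^*(t)\bigr)^{\theta_i}$ and H\"older in $L^{q_i/\theta_i}(dt/t)$ finishes; the whole point of the lemma is that for different $s_i$ the functions $(-\Delta)^{s_i/2}f$ are pointwise unrelated, and that is precisely the obstruction your interpolation scheme was meant to, but does not, overcome. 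Note finally that every application of Lemma \ref{lem 2.12.} in this paper uses only $\lesssim$, so a constant-losing proof would suffice for the paper's purposes --- but it is not what either the lemma or your proposal asserts.
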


\subsection{Strichartz estimates}
In this subsection, we recall the Strichartz estimates for the Schr\"{o}dinger semi-group $e^{it\Delta}$ in Lorentz spaces, which are the fundamental tool to establish the local well-posedness of the Cauchy problem \eqref{GrindEQ__1_1_} in $H^s$. See, for example, \cite{AT21,D21A,KT98}.
Throughout the paper, a pair $(p,q)$ is said to be admissible (for short $(p,q)\in S$) if
\begin{equation} \label{GrindEQ__2_1_}
\left\{\begin{array}{ll}
{2\le q\le\frac{2d}{d-2}},~&{{\rm if}~d\ge3,}\\
{2\le q<\infty,}~&{{\rm if}~d\le 2,}
\end{array}\right.
\end{equation}
and
\begin{equation} \label{GrindEQ__2_2_}
\frac{2}{p} =\frac{d}{2} -\frac{d}{q} .
\end{equation}
Moreover, $(p,q)\in S_0$ means that $(p,q)\in S$ with
\begin{equation} \nonumber
\left\{\begin{array}{ll}
{2< q<\frac{2d}{d-2}},~&{{\rm if}~d\ge3,}\\
{2< q<\infty,}~&{{\rm if}~d\le 2.}
\end{array}\right.
\end{equation}

\begin{lemma}[Strichartz estimates]\label{lem 2.13.}
Let $d\in \mathbb N$ and $S(t)=e^{it\Delta}$. Then for any admissible pairs $(p,q)$ and $(a,b)$, we have
\begin{equation} \label{GrindEQ__2_3_}
\left\| S(t)\phi \right\| _{L^{p}(\mathbb R,L^{q,2}(\mathbb R^d))} \lesssim\left\| \phi \right\| _{L^{2}(\mathbb R^d) } ,
\end{equation}
\begin{equation} \label{GrindEQ__2_4_}
\left\|\int_{0}^{t}S(t-\tau)f(\tau)d\tau\right\|_{L^{p}(\mathbb R,L^{q,2}(\mathbb R^d))}\lesssim\left\|f\right\|_{L^{a'}(\mathbb R,L^{b',2}(\mathbb R^d))}.
\end{equation}
\end{lemma}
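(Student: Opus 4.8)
The plan is to deduce the Lorentz-space Strichartz estimates from the well-known dispersive estimate for the free Schr\"{o}dinger group together with the abstract $TT^*$ machinery, but carried out in the Lorentz scale rather than the Lebesgue scale. First I would recall the two basic ingredients: the $L^2$ conservation $\|S(t)\phi\|_{L^2}=\|\phi\|_{L^2}$, and the pointwise dispersive estimate $\|S(t)\phi\|_{L^\infty}\lesssim |t|^{-d/2}\|\phi\|_{L^1}$. The key idea is that the dispersive bound, being a bound between the endpoint Lebesgue spaces $L^1$ and $L^\infty$, can be upgraded by real interpolation (the Marcinkiewicz-type interpolation theorem adapted to Lorentz spaces) to a family of bounds of the form $\|S(t)\phi\|_{L^{q,2}}\lesssim |t|^{-d(1/2-1/q)}\|\phi\|_{L^{q',2}}$ for the relevant range of $q$. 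This replaces the classical $L^q$–$L^{q'}$ decay estimate by its Lorentz refinement, which is in fact the natural statement since the real-interpolation method produces Lorentz spaces with a fixed second index that one can track.

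Next I would set up the $TT^*$ argument. Writing $T\phi=S(t)\phi$, the homogeneous estimate \eqref{GrindEQ__2_3_} is equivalent, by duality, to the boundedness of $TT^*$, whose kernel is $S(t-\tau)$. Applying the Lorentz dispersive bound obtained in the previous step and then the Hardy--Littlewood--Sobolev inequality in the time variable (with the admissibility relation \eqref{GrindEQ__2_2_} ensuring the correct homogeneity so that the time integral converges and the Sobolev exponents match), one obtains the bound for $TT^*$ from $L^{p'}(\mathbb R,L^{q',2})$ to $L^{p}(\mathbb R,L^{q,2})$. The admissibility condition \eqref{GrindEQ__2_1_}, which excludes the endpoint $q=\tfrac{2d}{d-2}$ in a way that keeps us strictly below it when we also want $(p,q)\in S_0$, guarantees $p>2$ so that the Hardy--Littlewood--Sobolev inequality applies in the non-endpoint regime. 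Undoing the duality then yields \eqref{GrindEQ__2_3_}. The inhomogeneous estimate \eqref{GrindEQ__2_4_} follows from the same dispersive-plus-Hardy--Littlewood--Sobolev computation applied directly to the Duhamel integral operator, combined with the Christ--Kiselev lemma to pass from the full-line operator $\int_{\mathbb R}$ to the retarded operator $\int_0^t$; the Christ--Kiselev step again requires the strict inequality $p>a'$ coming from admissibility, which holds off the endpoint.

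The main technical obstacle I anticipate is the endpoint behaviour. The classical Keel--Tao endpoint ($q=\tfrac{2d}{d-2}$ when $d\ge 3$) is precisely the case where the naive Hardy--Littlewood--Sobolev argument breaks down, and it is exactly to capture this endpoint cleanly that the Lorentz refinement $L^{q,2}$ is introduced on the left-hand side. Thus the heart of the matter is to verify that the interpolation and $TT^*$ steps go through uniformly up to and including $q=\tfrac{2d}{d-2}$, which is where one must invoke the bilinear real-interpolation form of the Keel--Tao argument rather than scalar Hardy--Littlewood--Sobolev. Since the statement here restricts to admissible pairs as in \eqref{GrindEQ__2_1_} and the Lorentz index $2$ on the target is fixed, I would cite the established Lorentz-space Strichartz theory (as in the references \cite{KT98,D21A,AT21}) for the delicate endpoint estimate rather than reprove Keel--Tao, and present the non-endpoint cases via the elementary dispersive-plus-Hardy--Littlewood--Sobolev route described above. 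The remaining verifications—that the exponents produced by interpolation are consistent with \eqref{GrindEQ__2_2_} and that all Lorentz Hölder and embedding relations from Lemma \ref{lem 2.1.} are respected—are routine bookkeeping.
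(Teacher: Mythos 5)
Your plan is correct and matches the paper's treatment: the paper does not prove Lemma \ref{lem 2.13.} at all but simply recalls it from \cite{KT98,D21A,AT21}, which establish it precisely by the route you describe (dispersive estimate, real interpolation to the Lorentz scale, $TT^*$ with Hardy--Littlewood--Sobolev off the endpoint, and the Keel--Tao bilinear argument at the endpoint). One small correction to your bookkeeping: the admissibility condition \eqref{GrindEQ__2_1_} \emph{does} include the endpoint $q=\frac{2d}{d-2}$ when $d\ge 3$ (only $S_0$ excludes it), so the Keel--Tao endpoint argument you defer to the references is genuinely required for the full statement of the lemma, not merely a safety net.
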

\subsection{Variational Analysis}
In this subsection, we recall the sharp Gagliardo--Nirenberg inequality and the sharp Hardy-Sobolev embedding which are useful to study the global existence and blow-up of $H^1$-solution to \eqref{GrindEQ__1_1_}.
\begin{lemma}[Sharp Gagliardo-Nirenberg inequality, \cite{F16}]\label{lem 2.14.}
Let $d\ge 1$, $0<\sigma<\sigma_{\rm c}(1,b)$, $0<b<2$. Then for $f\in H^{1}$, we have
\begin{equation}\label{GrindEQ__2_5_}
\int{|x|^{-b}|f(x)|^{\sigma+2}dx} \le C_{\rm GN}\left\|\nabla f\right\|_{L^2}^{\frac{d\sigma+2b}{2}}
\left\|f\right\|_{L^{2}}^{\frac{4-2b-\sigma(d-2)}{2}},
\end{equation}
The equality in \eqref{GrindEQ__2_5_} is attained by a function $Q\in H^{1}$, which is the unique positive radial solution to the elliptic equation \eqref{GrindEQ__1_12_}.
\end{lemma}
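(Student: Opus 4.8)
The plan is to follow Weinstein's variational method and characterize the sharp constant as the reciprocal of a minimization problem. Introduce the Weinstein-type functional
\[
J(f):=\frac{\left\|\nabla f\right\|_{L^2}^{\frac{d\sigma+2b}{2}}\left\|f\right\|_{L^2}^{\frac{4-2b-\sigma(d-2)}{2}}}{\int|x|^{-b}|f(x)|^{\sigma+2}dx},
\]
so that the asserted inequality is exactly $J(f)\ge 1/C_{\rm GN}$ with sharp constant $C_{\rm GN}=1/\inf_{0\ne f\in H^1}J(f)$. The subcriticality assumption $0<\sigma<\sigma_{\rm c}(1,b)$ ensures that both exponents $\frac{d\sigma+2b}{2}$ and $\frac{4-2b-\sigma(d-2)}{2}$ are positive and that $\sigma+2$ stays below the critical Sobolev exponent, so a non-sharp weighted Gagliardo--Nirenberg estimate (obtainable from a weighted Hardy--Sobolev inequality, cf. Corollary \ref{cor 2.7.}, and interpolation) shows that $J$ is finite and strictly positive on $H^1\setminus\{0\}$. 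Hence $\inf J$ is a well-defined nonnegative number, and it suffices to prove it is attained at a positive value.

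First I would record the scale invariance. A direct computation with $f_{\lambda,\mu}(x):=\lambda f(\mu x)$ shows $J(f_{\lambda,\mu})=J(f)$ for all $\lambda,\mu>0$, the two free parameters matching the two homogeneities of $J$. Given any minimizing sequence, solving the resulting two-by-two system for $\lambda,\mu$ (which is possible for every $d\ge1$) lets me normalize so that $\left\|\nabla f_n\right\|_{L^2}=\left\|f_n\right\|_{L^2}=1$, whereupon minimizing $J$ is equivalent to maximizing $\int|x|^{-b}|f|^{\sigma+2}dx$ over the set $\{\left\|\nabla f\right\|_{L^2}=\left\|f\right\|_{L^2}=1\}$.

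The crux is the existence of a maximizer, which I would obtain by symmetrization followed by a radial compactness argument. Since $|x|^{-b}$ is radially symmetric and nonincreasing, the Hardy--Littlewood rearrangement inequality gives $\int|x|^{-b}|f^{*}|^{\sigma+2}dx\ge\int|x|^{-b}|f|^{\sigma+2}dx$ for the Schwarz symmetrization $f^{*}$, while the P\'olya--Szeg\H{o} inequality gives $\left\|\nabla f^{*}\right\|_{L^2}\le\left\|\nabla f\right\|_{L^2}$ and $\left\|f^{*}\right\|_{L^2}=\left\|f\right\|_{L^2}$. Thus the maximizing sequence may be taken radially symmetric and nonincreasing. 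For such sequences the embedding of radial $H^1$-functions into the weighted space determined by $\int|x|^{-b}|\cdot|^{\sigma+2}dx$ is compact---a Strauss-type lemma in which the weight $|x|^{-b}$ with $b>0$, together with radiality (which removes translation invariance), restores the compactness that fails for the unweighted translation-invariant problem. Extracting a subsequence converging strongly in this weighted space and weakly in $H^1$, the weak lower semicontinuity of $\left\|f\right\|_{L^2}$ and $\left\|\nabla f\right\|_{L^2}$ together with strong convergence of the weighted integral force the weak limit $f_0$ to be nonzero and to satisfy $J(f_0)\le\inf J$; hence $f_0$ is a minimizer. This compactness/existence step is the main obstacle.

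Finally I would identify the minimizer with $Q$ and compute $C_{\rm GN}$. The extremizer $f_0$ solves an Euler--Lagrange equation $-\Delta f_0+\theta_1 f_0=\theta_2|x|^{-b}|f_0|^{\sigma}f_0$ with positive Lagrange multipliers $\theta_1,\theta_2$; after applying the scaling $f_0\mapsto\lambda f_0(\mu\,\cdot)$ with suitable $\lambda,\mu>0$ and, if necessary, replacing $f_0$ by $|f_0|$, this becomes the ground state equation $\Delta Q-Q+|x|^{-b}|Q|^{\sigma}Q=0$, namely \eqref{GrindEQ__1_12_}. The uniqueness of its positive radial solution pins down the extremizer, up to the scaling symmetries, as $Q$. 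Testing this equation against $Q$ and against $x\cdot\nabla Q$ produces two Pohozaev-type identities relating $\left\|\nabla Q\right\|_{L^2}^2$, $\left\|Q\right\|_{L^2}^2$ and $\int|x|^{-b}|Q|^{\sigma+2}dx$; substituting them into $J(Q)$ evaluates $\inf J=J(Q)$ explicitly, yielding the value of $C_{\rm GN}$ and confirming that equality in \eqref{GrindEQ__2_5_} is attained exactly at $Q$.
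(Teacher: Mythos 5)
The paper does not prove Lemma \ref{lem 2.14.}; it imports it from Farah \cite{F16}, whose proof is precisely the Weinstein-type variational argument you describe (scale-invariant functional, Schwarz symmetrization plus compactness of the weighted embedding for radial decreasing $H^1$-functions, Euler--Lagrange identification with the ground state $Q$, and Pohozaev identities to evaluate $C_{\rm GN}$). Your proposal is correct and follows essentially the same route as the cited source.
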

\begin{remark}\label{rem 2.15.}
\textnormal{We also have the following Pohozaev identities:
\begin{equation}\label{GrindEQ__2_6_}
\left\|Q\right\|_{L^2}^2=\frac{4-2b-(d-2)\sigma}{d\sigma+2b}
\left\|\nabla Q\right\|_{L^2}^{2}=\frac{4-2b-(d-2)\sigma}{2(\sigma+2)}\int{|x|^{-b}|f(x)|^{\sigma+2}dx} .
\end{equation}
In the mass-critical case $\sigma=\frac{4-2b}{d}$, we have
\begin{equation}\label{GrindEQ__2_7_}
C_{\rm GN}=\frac{2-b+d}{d}\left\|Q\right\|_{L^2}^{-\frac{4-2b}{d}}.
\end{equation}
In the intercritical case $\frac{4-2b}{d}<\sigma<\sigma_{\rm c}(1,b)$, we also have
\begin{equation}\label{GrindEQ__2_8_}
C_{\rm GN}=\frac{2(\sigma+2)}{d\sigma+2b}\left(\left\|\nabla Q\right\|_{L^{2}}\left\|Q\right\|_{L^{2}}^{\gamma_{\rm c}}\right)^{-\frac{d\sigma-4+2b}{2}},
\end{equation}
where $\gamma_{\rm c}$ is as in \eqref{GrindEQ__1_5_}.}
\end{remark}
\begin{lemma}[Sharp Hardy-Sobolev embedding, \cite{KP04}]\label{lem 2.16.}
Let $d\ge 3$ and $0< b< 2$. Then we have
\begin{equation}\label{GrindEQ__2_9_}
\left(\int{|x|^{-b}\left|f\right|^{\sigma_{\rm c}(1,b)+2}dx}\right)^{\frac{1}{\sigma_{\rm c}(1,b)+2}}
\le C_{\rm HS} \left\|\nabla f\right\|_{L^2},
\end{equation}
for all $f\in \dot{H}^{1}$, where the sharp Hardy-Sobolev constant $C_{\rm HS}$ defined by
\begin{equation}\label{GrindEQ__2_10_}
C_{\rm HS}=\inf_{f\in \dot{H}^{1}\setminus\left\{0\right\}}
{\frac{\left\|\nabla f\right\|_{L^2}}
{\left(\int{|x|^{-b}\left|f\right|^{\sigma_{\rm c}(1,b)+2}dx}\right)
^{\frac{1}{\sigma_{\rm c}(1,b)+2}}}}.
\end{equation}
is attained by function
\begin{equation}\label{GrindEQ__2_11_}
W_{b}(x):=\frac{\left[\varepsilon(d-b)(d-2)\right]^{\frac{d-2}{4-2b}}}
{\left(\varepsilon+|x|^{2-b}\right)^{\frac{d-2}{2-b}}},
\end{equation}
for all $\varepsilon>0$.
\end{lemma}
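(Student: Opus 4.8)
The plan is to realize \eqref{GrindEQ__2_9_} as the solution of the extremal problem defining the best constant in \eqref{GrindEQ__2_10_}. Writing $p:=\sigma_{\rm c}(1,b)+2=\frac{2(d-b)}{d-2}$ and $2^{*}:=\frac{2d}{d-2}$, I would proceed in four steps: (i) establish the inequality so that the best constant is finite and positive; (ii) reduce the extremal problem to nonnegative radially symmetric nonincreasing functions by rearrangement; (iii) prove that the infimum is attained by a concentration--compactness argument modulo the scaling symmetry; and (iv) derive the Euler--Lagrange equation, solve the resulting radial ODE, and identify $W_{b}$ as the extremizer together with the value of $C_{\rm HS}$.

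For step (i), I would write $\int|x|^{-b}|f|^{p}\,dx=\||x|^{-b}|f|^{p}\|_{L^{1}}$ and apply H\"{o}lder's inequality in Lorentz spaces (in the form recorded from \cite{O63}). Since $|x|^{-b}\in L^{d/b,\infty}$, whose dual first exponent is $\frac{d}{d-b}$, it suffices to control $\||f|^{p}\|_{L^{d/(d-b),1}}=\|f\|_{L^{2^{*},p}}^{p}$, where I have used the identity $\||f|^{p}\|_{L^{r,q}}=\|f\|_{L^{pr,pq}}^{p}$ and the fact that $\frac{pd}{d-b}=2^{*}$. Because $p>2$ for $0<b<2$, the inclusion $L^{2^{*},2}\hookrightarrow L^{2^{*},p}$ combined with the embedding $\dot{H}^{1}=\dot{H}^{1}_{2,2}\hookrightarrow L^{2^{*},2}$ furnished by Lemma \ref{lem 2.4.} yields $\int|x|^{-b}|f|^{p}\,dx\lesssim\|\nabla f\|_{L^{2}}^{p}$, so that $C_{\rm HS}\in(0,\infty)$.

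For step (ii), I would invoke the P\'olya--Szeg\H{o} inequality $\|\nabla f^{*}\|_{L^{2}}\le\|\nabla f\|_{L^{2}}$ together with the Hardy--Littlewood rearrangement inequality, which, since $|x|^{-b}$ is itself radially symmetric decreasing, gives $\int|x|^{-b}|f|^{p}\,dx\le\int|x|^{-b}|f^{*}|^{p}\,dx$. Hence Schwarz symmetrization does not increase the quotient in \eqref{GrindEQ__2_10_}, and we may restrict the minimization to nonnegative radial nonincreasing functions. For step (iii), the crucial observation is that the quotient is invariant under the dilations $f\mapsto\lambda^{(d-2)/2}f(\lambda\,\cdot)$, which preserve both $\|\nabla f\|_{L^{2}}$ and $\int|x|^{-b}|f|^{p}\,dx$, while the weight $|x|^{-b}$ breaks translation invariance and thereby pins any concentration to the origin. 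Taking a minimizing sequence of radial nonincreasing functions normalized by $\int|x|^{-b}|f_{n}|^{p}\,dx=1$, I would apply Lions' concentration--compactness principle to the measures $|\nabla f_{n}|^{2}\,dx$, fix the single remaining dilation parameter by a normalization, and exclude vanishing and dichotomy, extracting a subsequence converging strongly to a minimizer $W\ge0$.

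The final step is computational. After rescaling, the minimizer solves the Euler--Lagrange equation $-\Delta W=|x|^{-b}W^{p-1}$ with $p-1=\frac{d+2-2b}{d-2}$, which for radial $W$ reduces to $W''+\frac{d-1}{r}W'+r^{-b}W^{p-1}=0$. I would verify by direct substitution that the family $W_{b}$ in \eqref{GrindEQ__2_11_} solves this ODE, and then appeal to uniqueness of the positive decaying radial solution---obtainable through an Emden--Fowler change of variable $t=\log r$ that renders the equation autonomous---to conclude that the extremizers are precisely the $W_{b}$ up to the scaling and dilation symmetries; evaluating the quotient at $W_{b}$ then yields $C_{\rm HS}$. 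I expect step (iii) to be the main obstacle: the dilation invariance allows a minimizing sequence to concentrate at or spread away from the origin, and ruling out these scenarios by concentration--compactness---assisted by, but not automatic from, the broken translation invariance---is the technical heart of the argument, whereas steps (i), (ii), and (iv) are routine given the tools already recorded in the paper.
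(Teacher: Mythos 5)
First, for calibration: the paper does not prove Lemma \ref{lem 2.16.} at all --- it is quoted from \cite{KP04} --- so your proposal has to be judged against the classical argument in the literature rather than against anything in the text. With that said, your architecture is the right one, and steps (i), (ii) and (iv) are essentially correct. In (i) the exponent bookkeeping is right ($\frac{pd}{d-b}=\frac{2d}{d-2}$, $p>2$, then Lemma \ref{lem 2.4.}); the only caveat is that the Lorentz--H\"older inequality recorded in the paper requires the target exponent to lie in $(1,\infty)$, whereas you need the $L^{1}$-endpoint (duality) form $\|fg\|_{L^{1}}\lesssim\|f\|_{L^{d/b,\infty}}\|g\|_{L^{d/(d-b),1}}$, which is in \cite{O63} but is not literally the stated lemma. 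In (iv), the substitution $f(r)=r^{-\alpha}v(\log r)$ with $\alpha=\frac{d-2}{2}$ does render the radial Euler--Lagrange equation autonomous, $v''-\alpha^{2}v+v^{p-1}=0$ with $p=\sigma_{\rm c}(1,b)+2$, and its positive homoclinic orbits are unique up to translation, translations in $t$ corresponding exactly to the parameter $\varepsilon$ in \eqref{GrindEQ__2_11_}. (A side remark: \eqref{GrindEQ__2_10_} as printed is the reciprocal of the constant appearing in \eqref{GrindEQ__2_9_} and \eqref{GrindEQ__2_13_}; that inconsistency is the paper's, not yours.)

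The genuine gap is step (iii), which you yourself flag as the technical heart but do not carry out: because of the exact dilation invariance, a Lions concentration--compactness argument must exclude both concentration at the origin and leakage to spatial infinity, and ``fixing the dilation by a normalization'' does not by itself rule out dichotomy. The good news is that you do not need it: the change of variables from your step (iv), applied \emph{before} the existence argument, dissolves the difficulty. For radial $f$ (say smooth and supported away from $0$ and infinity, a dense class in radial $\dot H^{1}$ for $d\ge 3$), the two sides of the quotient transform as
\begin{equation}\nonumber
\|\nabla f\|_{L^{2}}^{2}=\omega_{d-1}\int_{\mathbb R}\bigl(|v'|^{2}+\alpha^{2}|v|^{2}\bigr)\,dt,
\qquad
\int|x|^{-b}|f|^{p}\,dx=\omega_{d-1}\int_{\mathbb R}|v|^{p}\,dt,
\end{equation}
where the cross term $\int vv'\,dt$ vanishes and the weight exponents cancel precisely because $\alpha p=d-b$ and $\alpha(p-2)=2-b$, i.e.\ because $p$ is the critical Hardy--Sobolev exponent. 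The extremal problem is thus a \emph{subcritical} one-dimensional minimization ($2<p<\infty$) of $\int(|v'|^{2}+\alpha^{2}v^{2})\,dt$ subject to $\int|v|^{p}\,dt=1$, invariant only under translations in $t$; compactness modulo translation here is elementary (symmetric-decreasing rearrangement in $t$ plus the compact embedding of even, decreasing $H^{1}(\mathbb R)$ functions into $L^{p}$), and the minimizer is the explicit soliton $v(t)=c_{1}\operatorname{sech}^{\frac{2}{p-2}}\bigl(\tfrac{(p-2)\alpha}{2}\,t\bigr)$. Undoing the substitution, $r^{-\alpha}v(\log r)$ collapses to $c_{2}\,(1+r^{2-b})^{-\frac{d-2}{2-b}}$, i.e.\ exactly $W_{b}$, with the translation parameter generating the $\varepsilon$-family, and evaluating the quotient gives the sharp constant. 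This is in essence the classical Bliss/Glaser--Grosse--Martin--Thirring computation on which \cite{KP04} rests; it replaces the hardest step of your plan by a routine one and settles existence, uniqueness in the radial class, and the explicit formula simultaneously.
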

Lemma 2.2 in \cite{KP04} also shows that $W_{b}$ solves the equation
\begin{equation}\nonumber
\Delta W_{b}+|x|^{-b}\left|W_{b}\right|^{\sigma_{\rm c}(1,b)}W_{b}=0,
\end{equation}
and satisfies
\begin{equation}\label{GrindEQ__2_12_}
\left\|\nabla W_{b}\right\|_{L^2}^2=\int{|x|^{-b} W_{b}^{\sigma_{\rm c}(1,b)+2}dx}.
\end{equation}
Hence, we have
\begin{equation}\label{GrindEQ__2_13_}
\left\|\nabla W_{b}\right\|_{L^2}^2=\int{|x|^{-b} W_{b}^{\sigma_{\rm c}(1,b)+2}dx}=[C_{\rm HS}]^{-\frac{2(d-b)}{2-b}}
\end{equation}
and
\begin{equation} \label{GrindEQ__2_14_}
E_{b}(W_{b})=\frac{1}{2} \left\|
\nabla W_{b}\right\|_{L^2}^2-\frac{1}{\sigma_{\rm c}(1,b)+2} \int{|x|^{-b} \left|W_{b}\right|^{\sigma_{\rm c}(1,b)+2}dx}=\frac{2-b}{2(d-b)}[C_{\rm HS}]^{-\frac{2(d-b)}{2-b}}.
\end{equation}

\section{Local well-posedness in $H^s$}\label{sec 3.}
In this section, we prove Theorem \ref{thm 1.2.}.
First, we recall the useful fact concerning to the term $|x|^{-b}$ with $b>0$.
\begin{remark}[\cite{G17}]\label{rem 3.1.}
\textnormal{Let $b>0$, $s\ge 0$ and $b+s<d$. Then we have $(-\Delta)^{s/2}(|x|^{-b})=C_{d,b}|x|^{-b-s}$.}
\end{remark}

Using Lemma \ref{lem 2.7.}, Corollary \ref{cor 2.11.} and Remark \ref{rem 3.1.}, we have the following estimates of the term $|x|^{-b}|u|^{\sigma}u$
\begin{lemma}\label{lem 3.2.}
Let $d\in \mathbb N$, $0\le s<\frac{d}{2}$ , $0\le b<\min \{2, d-s,1+\frac{d}{2}-s \}$ and $0<\sigma\le\frac{4-2b}{d-2s}$. If $\sigma$ is not an even integer, assume further $\sigma>\left\lceil s\right\rceil-1$.
Then, for any interval $I(\subset \mathbb R)$, there exist $(\tilde{p},\tilde{q})\in S_0$ and $(\tilde{\alpha},\tilde{\beta})\in S_0$ such that
\begin{equation} \label{GrindEQ__3_1_}
\left\||x|^{-b}|u|^{\sigma}u\right\|_{L^{\tilde{\alpha}'}(I,\dot{H}_{\tilde{\beta}',2}^{s})}
\lesssim |I|^{\theta}\left\|u\right\|^{\sigma+1}_{L^{\tilde{p}}(I,\dot{H}_{\tilde{q},2}^{s})},
\end{equation}
\begin{equation} \label{GrindEQ__3_2_}
\left\||x|^{-b}|u|^{\sigma}v\right\|_{L^{\tilde{\alpha}'}(I,L^{\tilde{\beta}',2})}
\lesssim |I|^{\theta}\left\|u\right\|^{\sigma}_{L^{\tilde{p}}(I,\dot{H}_{\tilde{q},2}^{s})}\left\|u\right\|_{L^{p}(I,L^{\tilde{q},2})},
\end{equation}
where $\theta=\frac{(4-2b)-(d-2s)\sigma}{4}$.
\end{lemma}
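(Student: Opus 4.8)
The plan is to reduce both estimates to pure Strichartz-type Sobolev--Lorentz norms by peeling off the singular weight $|x|^{-b}$ with the fractional product rule, controlling the resulting powers of $u$ through the nonlinear estimate of Corollary \ref{cor 2.11.} together with the embeddings of Lemma \ref{lem 2.4.}, and finally extracting the factor $|I|^{\theta}$ from a single application of H\"older's inequality in time. Since the weight is scale invariant, $\theta$ will turn out to be independent of the particular admissible pair chosen, so essentially all the difficulty is concentrated in checking that a legitimate choice of exponents exists.

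First I would fix the spatial exponents. Aiming for the target space $\dot{H}^{s}_{\tilde\beta',2}$ in \eqref{GrindEQ__3_1_}, I apply the fractional product rule (Lemma \ref{lem 2.7.}) with $f=|x|^{-b}$ and $g=|u|^{\sigma}u$, producing the two terms
$$\||x|^{-b}\|_{\dot{H}^{s}_{d/(b+s),\infty}}\,\||u|^{\sigma}u\|_{L^{r_2,2}}+\||x|^{-b}\|_{L^{d/b,\infty}}\,\||u|^{\sigma}u\|_{\dot{H}^{s}_{r_4,2}}.$$
By Remark \ref{rem 3.1.} the weight factors equal $\||x|^{-b-s}\|_{L^{d/(b+s),\infty}}$ and $\||x|^{-b}\|_{L^{d/b,\infty}}$, both finite because $|x|^{-\gamma}\in L^{d/\gamma,\infty}$ and $b+s<d$ (guaranteed by $b<d-s$). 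In the first term I rewrite $\||u|^{\sigma}u\|_{L^{r_2,2}}=\|u\|^{\sigma+1}_{L^{r_2(\sigma+1),2(\sigma+1)}}$ and use $\dot{H}^{s}_{\tilde q,2}\hookrightarrow L^{r_2(\sigma+1),2}\hookrightarrow L^{r_2(\sigma+1),2(\sigma+1)}$ (Lemma \ref{lem 2.4.} together with $L^{p,2}\hookrightarrow L^{p,2(\sigma+1)}$); in the second term I use Corollary \ref{cor 2.11.} and then the same Sobolev embedding for the lower-order factors. A short computation shows that both choices force the identical relation $\tfrac{1}{\tilde\beta'}=\tfrac{b-\sigma s}{d}+\tfrac{\sigma+1}{\tilde q}$, so the two contributions combine consistently and the spatial estimate reads $\||x|^{-b}|u|^{\sigma}u\|_{\dot{H}^{s}_{\tilde\beta',2}}\lesssim\|u\|^{\sigma+1}_{\dot{H}^{s}_{\tilde q,2}}$ for each fixed time.

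Next I would pass to the time variable. Imposing admissibility $(\tilde p,\tilde q),(\tilde\alpha,\tilde\beta)\in S_0$ and taking the $L^{\tilde\alpha'}(I)$ norm, H\"older's inequality in $t$ gives
$$\||x|^{-b}|u|^{\sigma}u\|_{L^{\tilde\alpha'}(I,\dot{H}^{s}_{\tilde\beta',2})}\lesssim |I|^{\theta}\,\|u\|^{\sigma+1}_{L^{\tilde p}(I,\dot{H}^{s}_{\tilde q,2})},\qquad \theta=\frac{1}{\tilde\alpha'}-\frac{\sigma+1}{\tilde p}.$$
Substituting the admissibility relation $\tfrac1{\tilde p}=\tfrac d4-\tfrac{d}{2\tilde q}$, the corresponding one for $(\tilde\alpha,\tilde\beta)$, and the relation for $\tilde\beta'$ above, all the $\tilde q$-dependence cancels and $\theta$ collapses to $1-\tfrac b2-\tfrac{(d-2s)\sigma}{4}=\tfrac{(4-2b)-(d-2s)\sigma}{4}$, exactly as claimed; in particular $\theta\ge0$ is equivalent to the hypothesis $\sigma\le\tfrac{4-2b}{d-2s}$ (the H\"older step in time requires precisely $(\sigma+1)\tilde\alpha'\le\tilde p$). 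Estimate \eqref{GrindEQ__3_2_} is obtained the same way but more directly: there is no derivative on the product, so I only need H\"older in space ($\||x|^{-b}\|_{L^{d/b,\infty}}$, the Sobolev embedding for the $\sigma$ factors of $u$, and the remaining factor in $L^{\tilde q,2}$) followed by the identical H\"older step in time.

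The hard part is not any single inequality but the bookkeeping that guarantees all the intermediate exponents are admissible simultaneously. Concretely, I must exhibit a value of $\tilde q$ for which $(\tilde p,\tilde q)$ and $(\tilde\alpha,\tilde\beta)$ both lie in $S_0$ -- that is, $\tilde q$ and $\tilde\beta$ strictly between $2$ and $\tfrac{2d}{d-2}$ -- while keeping $r_2,r_4$ and the embedding exponent $\ell$ (with $\tfrac1\ell=\tfrac1{\tilde q}-\tfrac sd$) inside $(1,\infty)$, ensuring the product/chain/embedding lemmas in Lorentz spaces actually apply, and respecting the requirement $\sigma>\lceil s\rceil-1$ needed for Corollary \ref{cor 2.11.}. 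This is exactly where the three upper bounds on $b$ enter: $b<d-s$ places the weight in the right Lorentz space, whereas $b<2$ and $b<1+\tfrac d2-s$ are what open a nonempty window of $\tilde q$ keeping both admissible pairs strictly inside $S_0$ (equivalently, keeping $\tilde\beta<\tfrac{2d}{d-2}$). Verifying that this window is nonempty under the stated hypotheses, and handling the critical endpoint $\theta=0$ where no integrability in time is gained, is the crux of the argument.
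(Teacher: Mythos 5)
Your argument is essentially identical to the paper's proof: the same fractional product rule decomposition with $\||x|^{-b}\|_{L^{d/b,\infty}}$ and $\||x|^{-b}\|_{\dot H^{s}_{d/(b+s),\infty}}$ via Remark \ref{rem 3.1.}, the same exponent relation $\tfrac{1}{\tilde\beta'}=\sigma(\tfrac{1}{\tilde q}-\tfrac{s}{d})+\tfrac{1}{\tilde q}+\tfrac{b}{d}$, Corollary \ref{cor 2.11.} plus the Sobolev--Lorentz embeddings for the pointwise-in-time bound, and H\"older in time yielding the same value of $\theta$. The only cosmetic difference is that the paper treats $b=0$ separately (no product rule needed) and writes out the window \eqref{GrindEQ__3_4_} for $\tilde\beta'$ explicitly, whereas you leave that verification as the identified ``crux''; both are acceptable at the same level of detail.
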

\begin{proof}
We claim that there exist $(\tilde{\alpha},\tilde{\beta})\in S_0$ and $(\tilde{p},\tilde{q})\in S_0$ satisfying
\begin{equation} \label{GrindEQ__3_3_}
\frac{1}{\tilde{\beta}'} =\sigma \left(\frac{1}{\tilde{q}} -\frac{s}{d} \right)+\frac{1}{\tilde{q}}+\frac{b}{d}, ~\frac{1}{\tilde{q}} -\frac{s}{d} >0.
\end{equation}
In fact, we can choose $(p,q)\in S_0$ satisfying
$$
\max\left\{\frac{d-2}{2d},\;\frac{s}{d}\right\}<\frac{1}{\tilde{q}}<\frac{1}{2},
$$
provided that
\begin{equation}\label{GrindEQ__3_4_}
\max\left\{\frac{s}{d}+\frac{b}{d},\frac{(d-2)(\sigma+1)}{2d}-\frac{\sigma s}{d}+\frac{b}{d}\right\}<
\frac{1}{\tilde{\beta}'}<\frac{\sigma+1}{2}-\frac{\sigma s}{d}+\frac{b}{d}.
\end{equation}
And we can easily check that there exists $(\tilde{\alpha},\tilde{\beta})\in S_0$ satisfying \eqref{GrindEQ__3_4_} by using the fact $b<\min\{d-s,1+\frac{d}{2}-s\}$.

Let us consider the case $b>0$.
Using \eqref{GrindEQ__3_3_}, Lemmas \ref{lem 2.1.}, \ref{lem 2.4.}, \ref{lem 2.7.}, Corollary \ref{cor 2.11.} and Remark \ref{rem 3.1.}, we have
\begin{eqnarray}\begin{split} \label{GrindEQ__3_5_}
\left\| |x|^{-b}|u|^{\sigma}u\right\| _{\dot{H}_{\tilde{\beta}',2}^{s} } &\lesssim \left\| |x|^{-b}\right\| _{L^{q_{1},\infty}}\left\| |u|^{\sigma}u\right\| _{\dot{H}_{q_{2},2}^{s}}
+\left\| |x|^{-b}\right\| _{\dot{H}_{q_{3},\infty}^{s}}\left\| |u|^{\sigma}u\right\| _{L^{q_{4},2}}\\
&\lesssim \left\| |u|^{\sigma}u\right\| _{\dot{H}_{q_{2},2}^{s}}+\left\| |u|^{\sigma}u\right\| _{L^{q_{4},2}}\\
&\lesssim \left\| u\right\| _{L^{r,2(\sigma+1)}}^{\sigma}\left\| u\right\| _{\dot{H}_{\tilde{q},2(\sigma+1)}^{s}}+\left\| u\right\| _{L^{\tilde{\alpha},2(\sigma+1)}}^{\sigma+1}\\
&\lesssim \left\| u\right\| _{L^{r,2}}^{\sigma}\left\| u\right\| _{\dot{H}_{\tilde{q},2}^{s}}+\left\| u\right\| _{L^{\tilde{r},2}}^{\sigma+1}
\lesssim \left\| u\right\| _{\dot{H}_{\tilde{q},2}^{s} }^{\sigma +1}.
\end{split}\end{eqnarray}
where
\begin{equation}\nonumber
\frac{1}{q_{1}}:=\frac{b}{d},~\frac{1}{q_{2}}:=\frac{1}{\tilde{\beta}'}-\frac{b}{d},~\frac{1}{q_{3}}:=\frac{b+s}{d},~\frac{1}{q_{4}}:=\frac{1}{\tilde{\beta}'}-\frac{b+s}{d}~\frac{1}{\tilde{r}}:=\frac{1}{\tilde{q}}-\frac{s}{d}.
\end{equation}
Similarly, we also have
\begin{eqnarray}\begin{split} \label{GrindEQ__3_6_}
\left\| |x|^{-b}|u|^{\sigma}v\right\| _{L^{\tilde{\beta}',2}} &\lesssim  \left\| |x|^{-b}\right\| _{L^{q_{1},\infty}}\left\| |u|^{\sigma}v\right\| _{L^{q_{2},2}}
\lesssim \left\| u\right\| _{L^{\tilde{r},2}}^{\sigma}\left\| v\right\| _{L^{\tilde{q},2}}
\lesssim \left\| u\right\| _{\dot{H}_{\tilde{q},2}^{s} }^{\sigma }\left\| v\right\| _{L^{\tilde{q},2}}.
\end{split}\end{eqnarray}

Let us consider the case $b=0$. It follows from \eqref{GrindEQ__3_3_}, Lemma \ref{lem 2.4.} and Corollary \ref{cor 2.11.} that
\begin{eqnarray}\begin{split} \label{GrindEQ__3_7_}
\left\||u|^{\sigma}u\right\| _{\dot{H}_{\tilde{\beta}',2}^{s} }
\lesssim \left\| u\right\| _{L^{\tilde{r},2(\sigma+1)}}^{\sigma}\left\| u\right\| _{\dot{H}_{\tilde{q},2(\sigma+1)}^{s}}
\lesssim \left\| u\right\| _{L^{\tilde{r},2}}^{\sigma}\left\| u\right\| _{\dot{H}_{\tilde{q},2}^{s}}
\lesssim \left\| u\right\| _{\dot{H}_{\tilde{q},2}^{s} }^{\sigma +1}
\end{split}\end{eqnarray}
and
\begin{eqnarray}\begin{split} \label{GrindEQ__3_8_}
\left\||u|^{\sigma}u\right\| _{L^{\tilde{\beta}',2}}
\lesssim \left\| u\right\| _{L^{\tilde{r},2(\sigma+1)}}^{\sigma}\left\| u\right\| _{L^{\tilde{q},2(\sigma+1)}}
\lesssim \left\| u\right\| _{L^{\tilde{r},2}}^{\sigma}\left\| u\right\| _{L^{\tilde{q},2}}
\lesssim \left\| u\right\| _{\dot{H}_{\tilde{q},2}^{s} }^{\sigma}\left\| u\right\| _{L^{\tilde{q},2}}.
\end{split}\end{eqnarray}
On the other hand, \eqref{GrindEQ__3_3_} imply that
\begin{equation} \label{GrindEQ__3_9_}
\theta:=\frac{1}{\tilde{\alpha}'}-\frac{\sigma+1}{\tilde{p}}=\frac{(4-2b)-(d-2s)\sigma}{4}.
\end{equation}
Using \eqref{GrindEQ__3_5_}--\eqref{GrindEQ__3_9_} and H\"{o}lder's inequality, we immediately get \eqref{GrindEQ__3_1_} and \eqref{GrindEQ__3_2_}.
\end{proof}

\begin{lemma}\label{lem 3.3.}
Let $d\in \mathbb N$, $0\le s<\frac{d}{2}$ and $0<a<\min \{2, d-s,1+\frac{d}{2}-s \}$.
Then, for any interval $I(\subset \mathbb R)$, there exist $(\bar{p},\bar{q})\in S_0$ and $(\bar{\alpha},\bar{\beta})\in S_0$ such that
\begin{equation} \label{GrindEQ__3_10_}
\left\||x|^{-a}u\right\|_{L^{\bar{\alpha}'}(I,\dot{H}_{\bar{\beta}',2}^{s})}
\lesssim |I|^{\frac{2-a}{2}}\left\|u\right\|_{L^{\bar{p}}(I,\dot{H}_{\bar{q},2}^{s})},
\end{equation}
\begin{equation} \label{GrindEQ__3_11_}
\left\||x|^{-a}u\right\|_{L^{\bar{\alpha}'}(I,L^{\bar{\beta}',2})}
\lesssim |I|^{\frac{2-a}{2}}\left\|u\right\|_{L^{\bar{p}}(I,L^{\bar{q},2})}.
\end{equation}
\end{lemma}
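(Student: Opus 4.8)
The plan is to treat \eqref{GrindEQ__3_10_}--\eqref{GrindEQ__3_11_} as the degenerate $\sigma=0$ case of Lemma \ref{lem 3.2.}, with the nonlinear weight $|x|^{-b}$ replaced by the potential weight $|x|^{-a}$. First I would reduce both inequalities to fixed-time (spatial) estimates and then recover the time factor $|I|^{\frac{2-a}{2}}$ by H\"older's inequality in $t$. The heart of the matter is to produce two admissible pairs $(\bar\alpha,\bar\beta)\in S_0$ and $(\bar p,\bar q)\in S_0$ tied together by the scaling relation
\begin{equation}\nonumber
\frac{1}{\bar\beta'}=\frac{1}{\bar q}+\frac{a}{d},\qquad \frac{1}{\bar q}-\frac{s}{d}>0 .
\end{equation}
This is exactly \eqref{GrindEQ__3_3_} with $\sigma=0$ and $b$ replaced by $a$; writing $x=1/\bar q$, the membership requirements $(\bar p,\bar q),(\bar\alpha,\bar\beta)\in S_0$ together with the above relation amount to placing $x$ in an interval whose endpoints I would compute explicitly.

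For the spatial estimate I would invoke the fractional product rule (Lemma \ref{lem 2.7.}) to write
\begin{equation}\nonumber
\left\||x|^{-a}u\right\|_{\dot{H}_{\bar\beta',2}^{s}}\lesssim \left\||x|^{-a}\right\|_{L^{d/a,\infty}}\left\|u\right\|_{\dot{H}_{q_2,2}^{s}}+\left\||x|^{-a}\right\|_{\dot{H}_{q_3,\infty}^{s}}\left\|u\right\|_{L^{q_4,2}},
\end{equation}
with $1/q_2=1/\bar\beta'-a/d$ and $1/q_3=(a+s)/d$. Since $|x|^{-a}\in L^{d/a,\infty}$ and, by Remark \ref{rem 3.1.} (valid because $a+s<d$ is guaranteed by $a<d-s$), $(-\Delta)^{s/2}(|x|^{-a})=C_{d,a}|x|^{-a-s}\in L^{d/(a+s),\infty}$, both weight factors are finite. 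The relation $1/\bar\beta'=1/\bar q+a/d$ forces $q_2=\bar q$, so the first term is already $\|u\|_{\dot{H}_{\bar q,2}^{s}}$, while for the second term $1/q_4=1/\bar\beta'-(a+s)/d=1/\bar q-s/d$, so the Sobolev embedding $\dot{H}_{\bar q,2}^{s}\hookrightarrow L^{q_4,2}$ (Lemma \ref{lem 2.4.}) yields $\|u\|_{L^{q_4,2}}\lesssim\|u\|_{\dot{H}_{\bar q,2}^{s}}$. The secondary Lorentz indices match automatically ($1/2=0+1/2$) because $|x|^{-a}$ carries index $\infty$. For \eqref{GrindEQ__3_11_} I would simply apply H\"older's inequality in Lorentz spaces, $\||x|^{-a}u\|_{L^{\bar\beta',2}}\lesssim\||x|^{-a}\|_{L^{d/a,\infty}}\|u\|_{L^{\bar q,2}}$; when $s=0$ the two estimates coincide and no product rule is needed.

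It remains to integrate in time and to confirm the admissible-pair selection. Using \eqref{GrindEQ__2_2_} and the relation $1/\bar\beta'=1/\bar q+a/d$, a direct computation gives $\frac{1}{\bar\alpha'}-\frac{1}{\bar p}=1-\frac{a}{2}=\frac{2-a}{2}=:\theta>0$ (here $a<2$ is used), so H\"older in $t$ over $I$ produces the factor $|I|^{\theta}$ and completes \eqref{GrindEQ__3_10_}--\eqref{GrindEQ__3_11_}. The step I expect to be the main obstacle is verifying that the interval for $x=1/\bar q$ is nonempty: the bounds $(d-2)/(2d)<x<1/2$ (from $(\bar p,\bar q)\in S_0$) and $1/2<1/\bar q+a/d<(d+2)/(2d)$ (from $(\bar\alpha,\bar\beta)\in S_0$), together with $x>s/d$, reduce to comparisons among the endpoints that hold precisely because $0<a<\min\{2,d-s,1+\tfrac{d}{2}-s\}$ and $s<d/2$. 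In particular the comparison $s/d<(d+2-2a)/(2d)$ is exactly the constraint $a<1+\tfrac{d}{2}-s$, while $a<2$ controls $(d-2)/(2d)$ against the upper endpoint; for $d\le 2$ the same bookkeeping with $2<\bar q,\bar\beta<\infty$ reduces feasibility to $a<d-s$. Once this nonemptiness is established, the rest is routine.
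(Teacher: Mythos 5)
Your proposal is correct and follows essentially the same route as the paper: the same reduction to the fixed-time estimate via the fractional product rule with the weights $\left\||x|^{-a}\right\|_{L^{d/a,\infty}}$ and $\left\||x|^{-a}\right\|_{\dot{H}^{s}_{d/(a+s),\infty}}$ (through Remark \ref{rem 3.1.}), the same exponent relation $\frac{1}{\bar\beta'}=\frac{1}{\bar q}+\frac{a}{d}$ with $\frac{1}{\bar q}>\frac{s}{d}$, and the same H\"older-in-time step producing $|I|^{\frac{2-a}{2}}$ from $\frac{1}{\bar\alpha'}-\frac{1}{\bar p}=\frac{2-a}{2}$. Your feasibility check for the admissible pairs is in fact slightly more explicit than the paper's, which merely asserts that the interval for $\frac{1}{\bar\beta'}$ is nonempty under $0<a<\min\{2,d-s,1+\frac{d}{2}-s\}$.
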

\begin{proof}
We claim that there exist $(\bar{\alpha},\bar{\beta})\in S_0$ and $(\bar{p},\bar{q})\in S_0$ satisfying
\begin{equation} \label{GrindEQ__3_12_}
\frac{1}{\bar{\beta}'} =\frac{1}{\bar{q}}+\frac{a}{d}, ~\frac{1}{\bar{q}} -\frac{s}{d}=:\frac{1}{\bar{r}} >0.
\end{equation}
In fact, we can choose $\bar{q}$ satisfying
$$
\max\left\{\frac{d-2}{2d},\;\frac{s}{d}\right\}<\frac{1}{\bar{q}}<\frac{1}{2},
$$
provided that we choose $(\bar{\alpha},\bar{\beta})\in S_0$ satisfying
\begin{equation}\label{GrindEQ__3_13_}
\max\left\{\frac{d-2}{2d}+\frac{a}{d},\;\frac{s+a}{d}\right\}<
\frac{1}{\bar{\beta}'}<\frac{1}{2}+\frac{a}{d}.
\end{equation}
Using the fact $0<a<\min \{2, d-s,1+\frac{d}{2}-s \}$, we can easily check that there exists $(\bar{\alpha},\bar{\beta})\in S_0$ satisfying \eqref{GrindEQ__3_13_}.
Using \eqref{GrindEQ__3_12_}, Lemma \ref{lem 2.4.}, Lemma \ref{lem 2.7.}, Remark \ref{rem 3.1.} and H\"{o}lder's inequality, we have
\begin{eqnarray}\begin{split} \label{GrindEQ__3_14_}
\left\| |x|^{-a}u\right\| _{\dot{H}_{\bar{\beta}',2}^{s} } &\lesssim \left\| |x|^{-a}\right\| _{L^{{d}/a,\infty}}\left\| u\right\| _{\dot{H}_{\bar{q},2}^{s}}
+\left\| |x|^{-a}\right\| _{\dot{H}_{{d}/{(a+s)},\infty}^{s}}\left\| u\right\| _{L^{\bar{r},2}}
\lesssim \left\| u\right\| _{\dot{H}_{\bar{q},2}^{s} }
\end{split}\end{eqnarray}
and
\begin{eqnarray}\begin{split} \label{GrindEQ__3_15_}
\left\| |x|^{-a}u\right\| _{L^{\bar{\beta}',2}} &\lesssim  \left\| |x|^{-a}\right\| _{L^{d/a,\infty}}\left\| u\right\| _{L^{\bar{q},2}}
\lesssim \left\| u\right\| _{L^{\bar{q},2}}.
\end{split}\end{eqnarray}
Hence, using \eqref{GrindEQ__3_14_}, \eqref{GrindEQ__3_15_}, H\"{o}lder's inequality and the fact that $\frac{1}{\bar{\alpha}'}-\frac{1}{\bar{p}}=\frac{2-a}{2}$, we immediately get \eqref{GrindEQ__3_10_} and \eqref{GrindEQ__3_11_}.
\end{proof}

We are ready to prove Theorem \ref{thm 1.2.}.

\begin{proof}[{\bf Proof of Theorem \ref{thm 1.2.}}]
Let $T>0$ and $M>0$, which will be chosen later. Given $I=[-T,\;T]$, we define
\begin{equation} \nonumber
D=\left\{u\in L^{\tilde{p}} (I,H^{s}_{\tilde{q},2})\cap L^{\bar{p}} (I,H^{s}_{\bar{q},2}):~\left\| u\right\| _{L^{\tilde{p}} (I,H^{s}_{\tilde{q},2})}+\left\| u\right\| _{L^{\bar{p}} (I,H^{s}_{\bar{q},2})} \le M\right\},
\end{equation}
where $(\tilde{p},\tilde{q})\in S_0$ and $(\bar{p},\bar{q})\in S_0$ are as in Lemmas \ref{lem 3.2.} and \ref{lem 3.3.}.
Putting
\begin{equation} \nonumber
d(u,v)=\left\| u-v\right\| _{L^{\tilde{p}} (I,L^{\tilde{q},2})}+\left\| u-v\right\| _{L^{\bar{p}} (I,L^{\bar{q},2})},
\end{equation}
$(D,d)$ is a complete metric space (see e.g. \cite{AT21}).
Now we consider the mapping
\begin{equation} \label{GrindEQ__3_16_}
G:\;u(t)\to S(t)u_{0} -i \int _{0}^{t}S(t-\tau)(c|x|^{-a}u(\tau)+\lambda|x|^{-b} |u(\tau)|^{\sigma}u(\tau))d\tau.
\end{equation}
Using Lemma \ref{lem 2.13.} (Strichartz estimates) and Lemmas \ref{lem 3.2.}, \ref{lem 3.3.}, we have
\begin{eqnarray}\begin{split}\label{GrindEQ__3_17_}
&\left\|Gu \right\| _{L^{\tilde{p}} (I,H^{s}_{\tilde{q},2})\cap L^{\bar{p}} (I,H^{s}_{\bar{q},2})}\\
&~~~~~\lesssim\left\|S(t)u_{0}\right\|_{L^{\tilde{p}} (I,H^{s}_{\tilde{q},2})\cap L^{\bar{p}} (I,H^{s}_{\bar{q},2})}
+\left\| |x|^{-b}|u|^{\sigma}u\right\|_{L^{\tilde{\alpha}'} (I,H^{s}_{\tilde{\beta}',2})}+|c|\left\| |x|^{-a}u\right\|_{L^{\bar{\alpha}'} (I,H^{s}_{\bar{\beta}',2})}\\
&~~~~~\lesssim \left\|S(t)u_{0}\right\|_{L^{\tilde{p}} (I,H^{s}_{\tilde{q},2})\cap L^{\bar{p}} (I,H^{s}_{\bar{q},2})}+|I|^{\frac{(4-2b)-(d-2s)\sigma}{4}}\left\|u\right\|^{\sigma+1}_{L^{\tilde{p}}(I,H_{\tilde{q},2}^{s})}
+|c||I|^{\frac{2-a}{2}}\left\|u\right\|_{L^{\bar{p}}(I,H_{\bar{q},2}^{s})}
\end{split}\end{eqnarray}
and
\begin{eqnarray}\begin{split}\label{GrindEQ__3_18_}
d(Gu,Gv)&\lesssim\left\| |x|^{-b} \left(|u|^{\sigma}u-|v|^{\sigma}v\right)\right\|_{{L^{\tilde{\alpha}'} (I,L^{\tilde{\beta}',2})}}+|c|\left\| |x|^{-a} (u-v)\right\|_{{L^{\bar{\alpha}'} (I,L^{\bar{\beta}',2})}}\\
&\lesssim |I|^{\frac{(4-2b)-(d-2s)\sigma}{4}}\left\|u\right\|^{\sigma}_{L^{\tilde{p}}(I,H_{\tilde{q},2}^{s})}\left\|u-v\right\|_{L^{\tilde{p}}(I,L^{\tilde{q},2})}
+|c||I|^{\frac{2-a}{2}}\left\|u-v\right\|_{L^{\bar{p}}(I,L^{\bar{q},2})}
\end{split}\end{eqnarray}

First, we consider the $H^s$-subcritical case $\sigma<\frac{4-2b}{d-2s}$. Using \eqref{GrindEQ__3_17_}, \eqref{GrindEQ__3_18_} and Lemma \ref{lem 2.13.} (Strichartz estimates), we have
\begin{eqnarray}\begin{split}\label{GrindEQ__3_19_}
\left\|Gu \right\| _{L^{\tilde{p}} (I,H^{s}_{\tilde{q},2})\cap L^{\bar{p}} (I,H^{s}_{\bar{q},2})}
\lesssim \left\|u_{0}\right\|_{H^{s}}+|I|^{\frac{(4-2b)-(d-2s)\sigma}{4}}M^{\sigma+1}
+|c||I|^{\frac{2-a}{2}}M
\end{split}\end{eqnarray}
and
\begin{eqnarray}\begin{split}\label{GrindEQ__3_20_}
d(Gu,Gv)\lesssim (|I|^{\frac{(4-2b)-(d-2s)\sigma}{4}}M^{\sigma}+|c||I|^{\frac{2-a}{2}})d(u,v).
\end{split}\end{eqnarray}
Taking $M=2C\left\|u_{0}\right\|_{H^{s}}$ and choosing $T>0$ small enough so that
\begin{equation} \label{GrindEQ__3_21_}
C|I|^{\frac{(4-2b)-(d-2s)\sigma}{4}}M^{\sigma}+C|c||I|^{\frac{2-a}{2}}<\frac{1}{2},
\end{equation}
it follows from \eqref{GrindEQ__3_19_} and \eqref{GrindEQ__3_20_} that $G$ is a contraction on $(D,d)$.
The continuous dependence result follows from the above argument and Lemma \ref{lem 2.12.}, whose proof will be omitted.

Next, we consider the $H^s$-critical case $\sigma=\frac{4-2b}{d-2s}$. Using \eqref{GrindEQ__3_17_} and \eqref{GrindEQ__3_18_}, we have
\begin{eqnarray}\begin{split}\label{GrindEQ__3_22_}
\left\|Gu \right\| _{L^{\tilde{p}} (I,H^{s}_{\tilde{q},2})\cap L^{\bar{p}} (I,H^{s}_{\bar{q},2})}
\lesssim \left\|S(t)u_{0}\right\|_{L^{\tilde{p}} (I,H^{s}_{\tilde{q},2})\cap L^{\bar{p}} (I,H^{s}_{\bar{q},2})}+M^{\sigma+1}
+|c||I|^{\frac{2-a}{2}}M
\end{split}\end{eqnarray}
and
\begin{equation}\label{GrindEQ__3_23_}
d(Gu,Gv)\lesssim (M^{\sigma}+|c||I|^{\frac{2-a}{2}})d(u,v).
\end{equation}
By the Strichartz estimates \eqref{GrindEQ__2_3_}, we can also see that
\begin{equation}\nonumber
 \left\|S(t)u_{0}\right\|_{L^{\tilde{p}} (I,H^{s}_{\tilde{q},2})\cap L^{\bar{p}} (I,H^{s}_{\bar{q},2})}\to 0~\textnormal{as}~T\to 0
\end{equation}
Hence, taking $M>0$ such that $CM^{\sigma}\le \frac{1}{4}$ and $T>0$ such that
\begin{equation} \label{GrindEQ__3_24_}
\left\|S(t)u_{0}\right\|_{L^{\tilde{p}} (I,H^{s}_{\tilde{q},2})\cap L^{\bar{p}} (I,H^{s}_{\bar{q},2})}\le \frac{M}{4}~\textnormal{and}
~C|c||I|^{\frac{2-a}{2}}<\frac{1}{4},
\end{equation}
\eqref{GrindEQ__3_21_} and \eqref{GrindEQ__3_22_} imply that $G$ is a contraction on $(D,d)$. This completes the proof.
\end{proof}
\section{Standard continuous dependence in $H^s$}\label{sec 4.}
In this section, we prove Theorem \ref{thm 1.4.}.
To this end, we need the following estimates of the term $f(u)-f(v)$, where $f(u)$ is a nonlinear function that behaves like $|u|^{\sigma}u$.

\begin{lemma}[\cite{AK232}]\label{lem 4.1.}
Let $p>1$, $0<s<1$ and $\sigma > 1$. Assume that $f\in C^{2} \left(\mathbb C\to \mathbb C\right)$ satisfies
\begin{equation} \label{GrindEQ__4_1_}
|f^{(k)} (u)|\lesssim|u|^{\sigma +1-k} ,
\end{equation}
for any $0\le k\le 2$ and $u\in \mathbb C$.
Suppose also that
\begin{equation} \label{GrindEQ__4_2_}
\frac{1}{p} =\sigma \left(\frac{1}{r} -\frac{s}{d} \right)+\frac{1}{r} ,~\frac{1}{r} -\frac{s}{d} >0.
\end{equation}
Then we have
\begin{equation}\label{GrindEQ__4_3_}
\left\| f(u)-f(v)\right\| _{\dot{H}_{p,2}^{s} } \lesssim(\left\| u\right\| _{\dot{H}_{r,2}^{s} }^{\sigma } +\left\| v\right\| _{\dot{H}_{r,2}^{s} }^{\sigma })\left\| u-v\right\| _{\dot{H}_{r,2}^{s} } .
\end{equation}
\end{lemma}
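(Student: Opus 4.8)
The plan is to reduce the difference estimate \eqref{GrindEQ__4_3_} to the fractional product rule (Lemma \ref{lem 2.7.}) and the fractional chain rule (Lemma \ref{lem 2.9.}) by means of a first-order Taylor expansion of the $\mathbb C$-valued map $f$. Writing $w_{\theta}=v+\theta(u-v)$ and using the Wirtinger form of the fundamental theorem of calculus, I would first record the representation
\[
f(u)-f(v)=(u-v)\int_{0}^{1}\partial_{z}f(w_{\theta})\,d\theta+(\bar{u}-\bar{v})\int_{0}^{1}\partial_{\bar{z}}f(w_{\theta})\,d\theta=:(u-v)\,g_{1}+(\bar{u}-\bar{v})\,g_{2}.
\]
Since complex conjugation is an isometry on each $\dot{H}_{p,q}^{s}$ and $L^{p,q}$, the two summands are estimated in the same way, so it suffices to bound $\|(u-v)g_{1}\|_{\dot{H}^{s}_{p,2}}$. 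The hypothesis \eqref{GrindEQ__4_1_} with $k=1,2$ supplies the pointwise bounds $|g_{i}|\lesssim|w_{\theta}|^{\sigma}\lesssim(|u|+|v|)^{\sigma}$ together with $|f''(w)|\lesssim|w|^{\sigma-1}$, and the first derivatives $\partial_{z}f,\partial_{\bar z}f$ genuinely belong to $C^{1}$ because $f\in C^{2}$; here the assumption $\sigma>1$ is precisely what makes the power $|w|^{\sigma-1}$ continuous and keeps the chain rule applicable.

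Next I would apply Lemma \ref{lem 2.7.} to the product $(u-v)g_{1}$, which yields the two contributions $\|u-v\|_{\dot{H}^{s}_{\cdot}}\|g_{1}\|_{L^{\cdot}}$ and $\|u-v\|_{L^{\cdot}}\|g_{1}\|_{\dot{H}^{s}_{\cdot}}$. For the factor $\|g_{1}\|_{L^{\cdot}}$ I would use the pointwise bound above and the identity $\||w_{\theta}|^{\sigma}\|_{L^{p,q}}=\|w_{\theta}\|^{\sigma}_{L^{\sigma p,\sigma q}}$. For the factor $\|g_{1}\|_{\dot{H}^{s}_{\cdot}}$ I would first move the $\theta$-integral outside by Minkowski's integral inequality and then invoke Lemma \ref{lem 2.9.} with $F=\partial_{z}f$, obtaining $\|g_{1}\|_{\dot{H}^{s}}\lesssim\|w_{\theta}\|^{\sigma-1}_{L^{\cdot}}\|w_{\theta}\|_{\dot{H}^{s}_{\cdot}}$. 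Throughout, the first (integrability) exponents are forced by the scaling relation \eqref{GrindEQ__4_2_}: matching them produces exactly the exponent $\tilde r$ with $\tfrac{1}{\tilde r}=\tfrac1r-\tfrac sd$, and the Sobolev embedding $\dot{H}^{s}_{r,2}\hookrightarrow L^{\tilde r,2}$ (Lemma \ref{lem 2.4.}) converts every $L^{\tilde r,\cdot}$-norm of $u$, $v$, $u-v$ back into the desired $\dot{H}^{s}_{r,2}$-norm. The uniform bound $\|w_{\theta}\|_{\dot{H}^{s}_{r,2}}\le\|u\|_{\dot{H}^{s}_{r,2}}+\|v\|_{\dot{H}^{s}_{r,2}}$ then produces $(\|u\|_{\dot{H}^{s}_{r,2}}+\|v\|_{\dot{H}^{s}_{r,2}})^{\sigma}\|u-v\|_{\dot{H}^{s}_{r,2}}$, which is dominated by the claimed right-hand side.

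The main obstacle is the bookkeeping of the second (Lorentz) exponents, since Lemma \ref{lem 2.9.} requires all of them to be finite, whereas the most naive index assignment forces one of them to be $\infty$. I would avoid this by fixing the first exponents through \eqref{GrindEQ__4_2_} (which pins $\tilde r$) and then choosing the second exponents freely inside $[2,\infty)$: for instance, running the product rule with both second indices equal to $4$ and the chain rule with a second index $n_{1}\ge\max\{4,\tfrac{2}{\sigma-1}\}$ chosen large enough that $n_{1}(\sigma-1)\ge 2$. All second indices then remain finite, and one only ever uses the monotone embeddings $\dot{H}^{s}_{r,2}\hookrightarrow\dot{H}^{s}_{r,q}$ and $L^{\tilde r,2}\hookrightarrow L^{\tilde r,q}$ for $q\ge 2$ (Lemma \ref{lem 2.1.}) to descend back to second index $2$. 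Verifying that the induced first indices stay in $(1,\infty)$ — the step where \eqref{GrindEQ__4_2_} and $\sigma>1$ are genuinely used — then completes the proof.
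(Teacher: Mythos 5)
The paper gives no proof of Lemma \ref{lem 4.1.} at all --- it is imported verbatim from \cite{AK232} --- so there is no in-paper argument to compare against; judged on its own, your proposal is correct and is the standard proof of this kind of difference estimate. The Wirtinger/Taylor representation $f(u)-f(v)=(u-v)g_1+(\bar u-\bar v)g_2$, the fractional product rule applied to $(u-v)g_1$, Minkowski plus the $C^1$ chain rule (Lemma \ref{lem 2.9.}) for $g_1$, and especially your handling of the secondary Lorentz indices (keeping them finite as Lemma \ref{lem 2.9.} requires, then descending to index $2$ via Lemma \ref{lem 2.1.}) are exactly the points that need care; the relation \eqref{GrindEQ__4_2_} pins the primary exponents (and $p>1$ guarantees $\tilde r/\sigma,\,\tilde r/(\sigma-1)>1$), while $\sigma>1$ is what keeps $\tilde r/(\sigma-1)$ finite so the chain rule applies.
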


\begin{lemma}\label{lem 4.2.}
Let $p>1$, $s\ge 1$ and $\sigma >\left\lceil s\right\rceil -1$.
Assume that $f\in C^{\left\lceil s\right\rceil } \left(\mathbb C\to \mathbb C\right)$ satisfies \eqref{GrindEQ__4_1_} for any $0\le k\le \left\lceil s\right\rceil $ and $u\in \mathbb C$. Assume further that
\begin{equation} \label{GrindEQ__4_4_}
|f^{\left(\left\lceil s\right\rceil\right)} (u)-f^{\left(\left\lceil s\right\rceil \right)} (v)|
\lesssim\left|u-v\right|^{\min \{ \sigma -\left\lceil s\right\rceil +1,1\} } \left(|u|+|v|\right)^{\max \{ 0,\sigma -\left\lceil s\right\rceil \} } ,
\end{equation}
for any $u,\;v\in \mathbb C$. Suppose also that \eqref{GrindEQ__4_2_} holds. Then we have
\begin{eqnarray}\begin{split} \label{GrindEQ__4_5_}
\left\| f(u)-f(v)\right\| _{\dot{H}_{p,2}^{s} }&\lesssim\left\| u-v\right\| _{L^{\gamma,2}}^{\min \{ \sigma -\left\lceil s\right\rceil +1,1\}}
(\left\| u\right\| _{\dot{H}_{r,2}^{s} }^{\max \{\left\lceil s\right\rceil,\sigma \}}+\left\|v\right\|_{\dot{H}_{r,2}^{s}}^{\max\{\left\lceil s\right\rceil,\sigma \}})\\
&~~+(\left\| u\right\| _{\dot{H}_{r,2}^{s} }^{\sigma } +\left\| v\right\| _{\dot{H}_{r,2}^{s} }^{\sigma })\left\| u-v\right\| _{\dot{H}_{r,2}^{s} } ,
\end{split}\end{eqnarray}
where $\gamma=\frac{rd}{d-rs} $. Moreover, if $\sigma \ge \left\lceil s\right\rceil $, then we have
\begin{equation} \label{GrindEQ__4_6_}
\left\| f(u)-f(v)\right\| _{\dot{H}_{p,2}^{s} } \lesssim(\left\| u\right\| _{\dot{H}_{r,2}^{s} }^{\sigma } +\left\| v\right\| _{\dot{H}_{r,2}^{s} }^{\sigma } )\left\| u-v\right\| _{\dot{H}_{r,2}^{s} } .
\end{equation}
\end{lemma}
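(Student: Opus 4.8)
The plan is to reduce the fractional estimate to a finite family of integer-order Leibniz expansions followed by a single fractional differentiation of order $v:=s-[s]\in[0,1)$, and then to telescope the resulting products. Write $m:=[s]\ge 1$. By Lemma \ref{lem 2.2.},
$$\left\|f(u)-f(v)\right\|_{\dot H^s_{p,2}}\sim \sum_{|\alpha|=m}\left\|D^\alpha\bigl(f(u)-f(v)\bigr)\right\|_{\dot H^v_{p,2}},$$
so it suffices to bound $\|D^\alpha(f(u)-f(v))\|_{\dot H^v_{p,2}}$ for each multi-index $\alpha$ with $|\alpha|=m$. Applying the Fa\`a di Bruno formula, I would expand $D^\alpha f(u)$ as a finite sum of terms $f^{(k)}(u)\prod_{j=1}^{k}D^{\beta_j}u$ with $1\le k\le m$, $|\beta_j|\ge 1$ and $\sum_{j}|\beta_j|=m$, and likewise for $D^\alpha f(v)$.

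Forming the difference term by term, each $f^{(k)}(u)\prod_j D^{\beta_j}u-f^{(k)}(v)\prod_j D^{\beta_j}v$ is telescoped as $\bigl(f^{(k)}(u)-f^{(k)}(v)\bigr)\prod_j D^{\beta_j}u$ plus $f^{(k)}(v)$ times a telescoping sum of the products in which one factor $D^{\beta_j}u$ is replaced by $D^{\beta_j}(u-v)$. For the generic term I would distribute the fractional derivative of order $v$ using the product rule in Lorentz spaces (Lemma \ref{lem 2.7.}), estimate each factor $\|D^{\beta_j}u\|$ or $\|D^{\beta_j}(u-v)\|$ by $\|u\|_{\dot H^s_{r,2}}$ or $\|u-v\|_{\dot H^s_{r,2}}$ (via Lemma \ref{lem 2.2.} and the interpolation Lemma \ref{lem 2.12.}), and estimate the nonlinear factor $f^{(k)}(v)$ (or its $v$-derivative) using the pointwise bound \eqref{GrindEQ__4_1_} together with the chain rule (Lemma \ref{lem 2.10.} and Corollary \ref{cor 2.11.}). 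The exponent bookkeeping is dictated by the balance relation \eqref{GrindEQ__4_2_} and by the Sobolev embedding $\dot H^s_{r,2}\hookrightarrow L^{\gamma,2}$ with $\gamma=rd/(d-rs)$ coming from Lemma \ref{lem 2.4.}; these terms produce the second summand $(\|u\|^{\sigma}_{\dot H^s_{r,2}}+\|v\|^{\sigma}_{\dot H^s_{r,2}})\|u-v\|_{\dot H^s_{r,2}}$ in \eqref{GrindEQ__4_5_}.

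The main obstacle is the top-order telescoping piece $(f^{(m)}(u)-f^{(m)}(v))\prod_{j}\partial_{i_j}u$, where all $|\beta_j|=1$ and $k=m$. When $s$ is an integer we have $v=0$, and the factor $f^{(m)}(u)-f^{(m)}(v)=f^{(\lceil s\rceil)}(u)-f^{(\lceil s\rceil)}(v)$ can be controlled pointwise by the H\"older hypothesis \eqref{GrindEQ__4_4_}, which, after H\"older's inequality in Lorentz spaces and $\dot H^s_{r,2}\hookrightarrow L^{\gamma,2}$, yields the first summand of \eqref{GrindEQ__4_5_}. When $s$ is not an integer we instead need the fractional norm $\|f^{(m)}(u)-f^{(m)}(v)\|_{\dot H^v_{\cdot,2}}$ with $0<v<1$; since $f^{(m)}=f^{([s])}$ is only $C^{1}$, Lemma \ref{lem 4.1.} (which requires a $C^{2}$ base nonlinearity) does not apply directly. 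This is exactly where \eqref{GrindEQ__4_4_} enters: the assumed H\"older continuity of $f^{(\lceil s\rceil)}=(f^{(m)})'$ plays the role that the $C^{2}$ hypothesis plays in Lemma \ref{lem 4.1.}, and a fundamental-theorem-of-calculus argument $f^{(m)}(u)-f^{(m)}(v)=\int_0^1\!\bigl(f^{(m+1)}(v+\theta(u-v))(u-v)+\cdots\bigr)\,d\theta$ combined with the fractional product and chain rules reproduces the $\|u-v\|_{L^{\gamma,2}}^{\min\{\sigma-\lceil s\rceil+1,1\}}(\|u\|^{\lceil s\rceil}_{\dot H^s_{r,2}}+\|v\|^{\lceil s\rceil}_{\dot H^s_{r,2}})$ contribution. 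Summing all contributions over $\alpha$ gives \eqref{GrindEQ__4_5_}.

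Finally, for the improvement \eqref{GrindEQ__4_6_} under the stronger assumption $\sigma\ge\lceil s\rceil$, the H\"older exponent $\min\{\sigma-\lceil s\rceil+1,1\}$ equals $1$, so $f^{(\lceil s\rceil)}$ is locally Lipschitz and the top-order difference can be treated on the same footing as the lower-order terms, with the single difference factor carried entirely in $\dot H^s_{r,2}$. A degree count confirms that both summands of \eqref{GrindEQ__4_5_} then share the common homogeneity $\sigma+1$ and collapse into the single expression $(\|u\|^{\sigma}_{\dot H^s_{r,2}}+\|v\|^{\sigma}_{\dot H^s_{r,2}})\|u-v\|_{\dot H^s_{r,2}}$, which is \eqref{GrindEQ__4_6_}.
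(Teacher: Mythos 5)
The paper does not actually prove this lemma: its ``proof'' is the two-line remark that the case $\sigma\ge\lceil s\rceil$ is Lemma 4.2 of \cite{AK232} and that the extension to $\sigma>\lceil s\rceil-1$ follows by combining that argument with Lemma 3.2 of \cite{AKC22}. Your sketch is a reconstruction of precisely the argument carried out in those references --- reduction to integer order via Lemma \ref{lem 2.2.}, Fa\`a di Bruno expansion, telescoping, the fractional product rule (Lemma \ref{lem 2.7.}) plus the embedding $\dot H^s_{r,2}\hookrightarrow L^{\gamma,2}$ for the lower-order terms, and the H\"older hypothesis \eqref{GrindEQ__4_4_} for the top-order difference --- so in substance you are following the intended route, and your degree counts and the collapse of \eqref{GrindEQ__4_5_} into \eqref{GrindEQ__4_6_} when $\sigma\ge\lceil s\rceil$ are correct (the latter is exactly how the paper deduces \eqref{GrindEQ__4_9_} from \eqref{GrindEQ__4_8_} in Lemma \ref{lem 4.5.}).

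There is one step where your proposed mechanism does not close as written. For $s\notin\mathbb N$ you must bound $\| f^{([s])}(u)-f^{([s])}(v)\| _{\dot H^{v}_{\cdot,2}}$ with $0<v<1$, and you propose the fundamental theorem of calculus, $f^{([s])}(u)-f^{([s])}(v)=\int_0^1\bigl(f^{(\lceil s\rceil)}(w_\theta)(u-v)+\cdots\bigr)\,d\theta$ with $w_\theta=v+\theta(u-v)$, followed by the fractional product and chain rules. But after the product rule you are left with $\| f^{(\lceil s\rceil)}(w_\theta)\| _{\dot H^{v}_{\cdot,2}}$, and when $\sigma<\lceil s\rceil$ the function $f^{(\lceil s\rceil)}$ is only H\"older continuous of order $\min\{\sigma-\lceil s\rceil+1,1\}<1$, not $C^1$, so Lemma \ref{lem 2.9.} does not apply to it. What is actually needed here --- and what Lemma 3.2 of \cite{AKC22} (following \cite{CFH11}) imports --- is the fractional chain rule for H\"older-continuous functions, roughly $\| G(w)\| _{\dot H^{v}_{p}}\lesssim\| w\| ^{\alpha}_{\dot H^{v/\alpha}_{\alpha p}}$ for $G$ H\"older of order $\alpha>v$, together with an interpolation to place $\dot H^{v/\alpha}_{\alpha p}$ between $L^{\gamma,2}$ and $\dot H^{s}_{r,2}$; this is how the anomalous exponent $\min\{\sigma-\lceil s\rceil+1,1\}$ on $\| u-v\| _{L^{\gamma,2}}$ is actually produced. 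This is a repairable gap --- the needed tool is standard and is exactly the content of the cited lemma --- but your sketch should invoke it explicitly rather than Lemma \ref{lem 2.9.}, which is inapplicable at that point.
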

\begin{proof}
The case $\sigma \ge \left\lceil s\right\rceil $ was proved in \cite{AK232}. However, we can easily extend this result to $\sigma>\left\lceil s\right\rceil-1$ by combining the arguments of Lemma 4.2 in \cite{AK232} and Lemma 3.2 in \cite{AKC22}, whose proof will be omitted.
\end{proof}

Similarly, we also have the following result.
\begin{lemma}[\cite{AK232}]\label{lem 4.3.}
Let $s>0$ and $f(z)$ be a polynomial in $z$ and $\bar{z}$ satisfying $\deg(f)=\sigma+1$. Suppose also that \eqref{GrindEQ__4_2_} holds.
Then we have \eqref{GrindEQ__4_6_}.
\end{lemma}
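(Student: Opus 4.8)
The plan is to exploit the fact that a polynomial can be differenced \emph{algebraically}, so that no fractional chain rule is needed and the estimate follows from the Leibniz rule together with Sobolev embeddings alone; this is exactly why the hypothesis imposes no lower bound on $\sigma$ of the form $\sigma>\lceil s\rceil-1$. Writing $f$ as a finite sum of monomials and using the triangle inequality, it suffices to prove \eqref{GrindEQ__4_6_} for a single monomial $f(z)=z^{j}\bar z^{k}$ with $j+k=\sigma+1$. This is precisely the form relevant to the application, since for $\sigma$ an even integer one has $|z|^{\sigma}z=z^{\frac{\sigma}{2}+1}\bar z^{\frac{\sigma}{2}}$, and it is consistent with the fact that both sides of \eqref{GrindEQ__4_6_} are homogeneous of degree $\sigma+1$ in $(u,v)$ (so that one may take $f$ homogeneous of degree $\sigma+1$ throughout).

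First I would linearise the difference by a purely algebraic telescoping. Using $u^{j}\bar u^{k}-v^{j}\bar v^{k}=(u^{j}-v^{j})\,\bar u^{k}+v^{j}\,(\bar u^{k}-\bar v^{k})$ together with $u^{j}-v^{j}=(u-v)\sum_{l=0}^{j-1}u^{l}v^{j-1-l}$ and the analogous factorisation of $\bar u^{k}-\bar v^{k}$, one rewrites $f(u)-f(v)$ as a finite sum of terms $h\,g_{1}\cdots g_{\sigma}$, where $h\in\{u-v,\overline{u-v}\}$ and each $g_{i}$ is a single copy of one of $u,\bar u,v,\bar v$. The crucial point is that, because $f$ is polynomial, every factor is a single copy of $u$ or $v$; in particular no power of $u$ or $v$ ever sits inside a Sobolev norm, which is what lets us avoid the chain rule entirely.

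Next I would estimate $\|h\,g_{1}\cdots g_{\sigma}\|_{\dot H^{s}_{p,2}}$ by iterating the bilinear fractional product rule of Lemma \ref{lem 2.7.} across the $\sigma+1$ factors (regrouping the factors kept in Lebesgue norm via the H\"older inequality in Lorentz spaces). This yields a finite sum of terms in which the $\dot H^{s}$-regularity is carried by exactly one factor, measured in $\dot H^{s}_{r,2(\sigma+1)}$, while the remaining $\sigma$ factors are measured in $L^{\gamma,2(\sigma+1)}$ with $\gamma=rd/(d-rs)$. Choosing all fine (second) Lorentz indices equal to $2(\sigma+1)$ makes them add up to $\tfrac12$, so the product lands at fine index $2$; and the integrability indices balance exactly as required, $\tfrac1p=\tfrac1r+\sigma\bigl(\tfrac1r-\tfrac{s}{d}\bigr)$, which is \eqref{GrindEQ__4_2_}, since the derivative factor contributes $1/r$ and each of the $\sigma$ remaining factors contributes $1/\gamma=1/r-s/d$. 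I would then collapse the fine indices back to $2$ using the embeddings $\dot H^{s}_{r,2}\hookrightarrow\dot H^{s}_{r,2(\sigma+1)}$ and $L^{\gamma,2}\hookrightarrow L^{\gamma,2(\sigma+1)}$ of Lemma \ref{lem 2.1.}, and convert each $L^{\gamma,2}$-norm into a $\dot H^{s}_{r,2}$-norm by the Sobolev embedding $\dot H^{s}_{r,2}\hookrightarrow L^{\gamma,2}$ of Lemma \ref{lem 2.4.}, legitimate precisely because $1/\gamma=1/r-s/d>0$ by \eqref{GrindEQ__4_2_}. Each resulting term is then dominated by $\|u-v\|_{\dot H^{s}_{r,2}}$ times a product of $\sigma$ factors, each equal to $\|u\|_{\dot H^{s}_{r,2}}$ or $\|v\|_{\dot H^{s}_{r,2}}$; bounding these by $\bigl(\|u\|_{\dot H^{s}_{r,2}}+\|v\|_{\dot H^{s}_{r,2}}\bigr)^{\sigma}\lesssim\|u\|_{\dot H^{s}_{r,2}}^{\sigma}+\|v\|_{\dot H^{s}_{r,2}}^{\sigma}$ and summing over the finitely many terms gives \eqref{GrindEQ__4_6_}.

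The main obstacle I anticipate is the index bookkeeping in the iterated product rule: at each splitting one must check that the intermediate integrability and fine indices meet the hypotheses $1<p_{i}<\infty$, $1\le q_{i}\le\infty$ of Lemma \ref{lem 2.7.} and of the Lorentz H\"older inequality, and that the endpoint exponent is admissible, i.e. $1<r\le\gamma<\infty$. None of this is conceptually deep, but it must be carried out with care. The genuine gain of arguing algebraically, rather than quoting Lemmas \ref{lem 4.1.} and \ref{lem 4.2.}, is that the telescoping never places a power of $u$ or $v$ inside a Sobolev norm and never invokes a fractional chain rule, so the estimate holds for every $s>0$ and every integer $\sigma\ge1$ with no constraint of the form $\sigma\ge\lceil s\rceil$ — a range that the chain-rule-based lemmas cannot reach.
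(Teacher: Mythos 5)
The paper does not actually prove Lemma \ref{lem 4.3.}: it is quoted from \cite{AK232} without proof, so there is no in-paper argument to compare against. Your proof is correct and is the standard route for polynomial nonlinearities (the same algebraic telescoping plus iterated Leibniz rule used in \cite{CFH11} and, for the Lorentz-space version, in \cite{AK232}): the factorisation $u^{j}\bar u^{k}-v^{j}\bar v^{k}$ into products of single copies of $u,\bar u,v,\bar v$ times $u-v$, followed by Lemma \ref{lem 2.7.}, the H\"older inequality in Lorentz spaces with fine indices $2(\sigma+1)$, and the embeddings of Lemmas \ref{lem 2.1.} and \ref{lem 2.4.}, gives exactly \eqref{GrindEQ__4_6_} under \eqref{GrindEQ__4_2_}, and your observation that this avoids any constraint of the form $\sigma\ge\lceil s\rceil$ correctly explains why the paper's hypotheses treat even-integer $\sigma$ separately. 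The only caveat worth recording is the one you already flag: the argument (and the homogeneity of both sides of \eqref{GrindEQ__4_6_}) requires $f$ to be a homogeneous polynomial of degree $\sigma+1$, i.e.\ lower-order monomials would break the index relation \eqref{GrindEQ__4_2_}; this is harmless for the intended application $f(u)=|u|^{\sigma}u=u^{\sigma/2+1}\bar u^{\sigma/2}$.
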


\begin{remark}\label{rem 4.4.}
\textnormal{Let $s>0$ and $\sigma>0$. If $\sigma $ is not an even integer, assume that $\sigma >\left\lceil s\right\rceil -1$. If $s<1$, in addition, suppose further that $\sigma \ge 1$. Then one can easily see that $f(u)=|u|^{\sigma } u$ satisfies the conditions of Lemmas \ref{lem 4.2.} and \ref{lem 4.3.}. See \cite{AKC22,CFH11} for example.}
\end{remark}

Using Remark \ref{rem 3.1.} and Lemmas \ref{lem 4.1.}--\ref{lem 4.3.}, we have the following estimates of the term $|x|^{-b}(|u|^{\sigma}u-|v|^{\sigma}v)$.
\begin{lemma}\label{lem 4.5.}
Let $1<p,\;r<\infty $, $b>0$, $s>0$, $b+s<d$ and $\sigma>0$. If $\sigma $ is not an even integer, assume that $\sigma >\left\lceil s\right\rceil -1$. If $s<1$, in addition, suppose further that $\sigma >1$.
Suppose also that
\begin{equation} \label{GrindEQ__4_7_}
\frac{1}{p} =\sigma \left(\frac{1}{r} -\frac{s}{d} \right)+\frac{1}{r}+\frac{b}{d}, ~\frac{1}{r} -\frac{s}{d} >0.
\end{equation}
Then we have
\begin{eqnarray}\begin{split} \label{GrindEQ__4_8_}
\left\| |x|^{-b}(|u|^{\sigma}u-|v|^{\sigma}v)\right\| _{\dot{H}_{p,2}^{s} }&\lesssim\left\| u-v\right\| _{L^{\gamma,2}}^{\min \{ \sigma -\left\lceil s\right\rceil +1,1\}}
(\left\| u\right\| _{\dot{H}_{r,2}^{s} }^{\max \{\left\lceil s\right\rceil,\sigma \}}+\left\|v\right\|_{\dot{H}_{r,2}^{s}}^{\max\{\left\lceil s\right\rceil,\sigma \}})\\
&~~+(\left\| u\right\| _{\dot{H}_{r,2}^{s} }^{\sigma } +\left\| v\right\| _{\dot{H}_{r,2}^{s} }^{\sigma })\left\| u-v\right\| _{\dot{H}_{r,2}^{s} } ,
\end{split}\end{eqnarray}
where $\gamma=\frac{rd}{d-rs} $. Moreover, if $\sigma \ge \left\lceil s\right\rceil $, then we have
\begin{equation} \label{GrindEQ__4_9_}
\left\||x|^{-b}(|u|^{\sigma}u-|v|^{\sigma}v)\right\| _{\dot{H}_{p,2}^{s} } \lesssim(\left\| u\right\| _{\dot{H}_{r,2}^{s} }^{\sigma } +\left\| v\right\| _{\dot{H}_{r,2}^{s} }^{\sigma } )\left\| u-v\right\| _{\dot{H}_{r,2}^{s} } .
\end{equation}
\end{lemma}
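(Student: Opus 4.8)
The plan is to write $w:=|u|^{\sigma}u-|v|^{\sigma}v=f(u)-f(v)$ with $f(z)=|z|^{\sigma}z$ and to strip off the singular weight $|x|^{-b}$ by means of the fractional product rule in Sobolev--Lorentz spaces (Lemma \ref{lem 2.7.}). Choosing the exponents $\frac{1}{q_1}=\frac{b}{d}$, $\frac{1}{q_2}=\frac{1}{p}-\frac{b}{d}$, $\frac{1}{q_3}=\frac{b+s}{d}$ and $\frac{1}{q_4}=\frac{1}{p}-\frac{b+s}{d}$, Lemma \ref{lem 2.7.} yields
\begin{equation}\nonumber
\left\||x|^{-b}w\right\|_{\dot{H}^s_{p,2}}\lesssim \left\||x|^{-b}\right\|_{L^{q_1,\infty}}\left\|w\right\|_{\dot{H}^s_{q_2,2}}+\left\||x|^{-b}\right\|_{\dot{H}^s_{q_3,\infty}}\left\|w\right\|_{L^{q_4,2}}.
\end{equation}
Since $|x|^{-b}\in L^{d/b,\infty}$ and, by Remark \ref{rem 3.1.}, $(-\Delta)^{s/2}(|x|^{-b})=C_{d,b}|x|^{-b-s}\in L^{d/(b+s),\infty}$ (both finite precisely because $b<d$ and $b+s<d$), the two weight factors are bounded constants, and it remains to control $\|w\|_{\dot{H}^s_{q_2,2}}$ and $\|w\|_{L^{q_4,2}}$ separately.

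For the first factor, the choice of $q_2$ together with \eqref{GrindEQ__4_7_} gives $\frac{1}{q_2}=\sigma\big(\frac{1}{r}-\frac{s}{d}\big)+\frac{1}{r}$, which is exactly the balance condition \eqref{GrindEQ__4_2_} with $p$ replaced by $q_2$. By Remark \ref{rem 4.4.}, $f(u)=|u|^{\sigma}u$ satisfies the hypotheses of Lemmas \ref{lem 4.1.}--\ref{lem 4.3.}, so I would invoke the appropriate difference estimate among them: Lemma \ref{lem 4.2.} when $s\ge 1$ (or Lemma \ref{lem 4.3.} when $\sigma$ is an even integer and $f$ is a polynomial) and Lemma \ref{lem 4.1.} when $0<s<1$ (where the extra hypothesis $\sigma>1$ is used). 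This bounds $\|f(u)-f(v)\|_{\dot{H}^s_{q_2,2}}$ directly by the right-hand side of \eqref{GrindEQ__4_8_}; in the subcase $\sigma\ge\left\lceil s\right\rceil$ these lemmas produce instead the sharper bound appearing in \eqref{GrindEQ__4_9_}.

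For the second factor I would start from the pointwise bound $|w|\lesssim(|u|^{\sigma}+|v|^{\sigma})|u-v|$. Subtracting $\frac{b+s}{d}$ in \eqref{GrindEQ__4_7_} gives $\frac{1}{q_4}=(\sigma+1)\big(\frac{1}{r}-\frac{s}{d}\big)=\frac{\sigma+1}{\gamma}$ with $\gamma=\frac{rd}{d-rs}$, so Hölder's inequality in Lorentz spaces splits the $L^{q_4,2}$ norm into $\sigma+1$ factors of $L^{\gamma,\cdot}$ type; raising the second Lorentz index via Lemma \ref{lem 2.1.} and then applying the embedding $\dot{H}^s_{r,2}\hookrightarrow L^{\gamma,2}$ of Lemma \ref{lem 2.4.} bounds $\|w\|_{L^{q_4,2}}$ by $(\|u\|_{\dot{H}^s_{r,2}}^{\sigma}+\|v\|_{\dot{H}^s_{r,2}}^{\sigma})\|u-v\|_{\dot{H}^s_{r,2}}$, which is already one of the terms on the right-hand side of \eqref{GrindEQ__4_8_} (and is exactly the right-hand side of \eqref{GrindEQ__4_9_}). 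Collecting the two factors then proves \eqref{GrindEQ__4_8_}, with \eqref{GrindEQ__4_9_} following in the same way once $\sigma\ge\left\lceil s\right\rceil$. I expect the only real obstacle to be the exponent bookkeeping: one must check that $q_1,q_2,q_3,q_4\in(1,\infty)$ and that all Lorentz second indices are admissible for Lemmas \ref{lem 2.7.}, \ref{lem 2.1.} and \ref{lem 2.4.}, which is precisely where the constraints $b+s<d$ and $\frac{1}{r}-\frac{s}{d}>0$ are consumed; once this is arranged, the analytic content is entirely furnished by the difference estimates of Lemmas \ref{lem 4.1.}--\ref{lem 4.3.}.
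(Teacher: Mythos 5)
Your proposal is correct and follows essentially the same route as the paper: the same application of the fractional product rule with the exponent splittings $\tfrac{1}{p}-\tfrac{b}{d}=\sigma(\tfrac1r-\tfrac{s}{d})+\tfrac1r$ and $\tfrac{1}{p}-\tfrac{b+s}{d}=(\sigma+1)(\tfrac1r-\tfrac{s}{d})=\tfrac{\sigma+1}{\gamma}$, the same use of Remark \ref{rem 3.1.} to control the weight factors, Lemmas \ref{lem 4.1.}--\ref{lem 4.3.} for the homogeneous-Sobolev piece, and H\"older plus the embedding $\dot{H}^{s}_{r,2}\hookrightarrow L^{\gamma,2}$ for the Lebesgue--Lorentz piece. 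No gaps.
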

\begin{proof}
Using Lemma \ref{lem 2.7.} (fractional product rule), Lemmas \ref{lem 2.4.}, \ref{lem 4.1.}--\ref{lem 4.3.}, Remark \ref{rem 3.1.} and H\"{o}lder's inequality in Lorentz spaces, we immediately get
\begin{eqnarray}\begin{split} \nonumber
&\left\| |x|^{-b}(|u|^{\sigma}u-|v|^{\sigma}v)\right\| _{\dot{H}_{p,2}^{s} }\\
&~~~~~\lesssim\left\| |x|^{-b}\right\|_{L^{d/b,\infty}}\left\| |u|^{\sigma}u-|v|^{\sigma}v\right\| _{\dot{H}_{p_1,2}^{s}}+\left\| |x|^{-b}\right\|_{H_{d/(b+s),\infty}^{s}}\left\| |u|^{\sigma}u-|v|^{\sigma}v\right\| _{L^{p_2,2}}\\
&~~~~~\lesssim \left\| |u|^{\sigma}u-|v|^{\sigma}v\right\| _{\dot{H}_{p_1,2}^{s}}+\left\| (|u|^{\sigma}+|v|^{\sigma})(u-v)\right\| _{L^{p_2,2}}\\
&~~~~~\lesssim\left\| u-v\right\| _{L^{\gamma,2}}^{\min \{ \sigma -\left\lceil s\right\rceil +1,1\}}
(\left\| u\right\| _{\dot{H}_{r,2}^{s} }^{\max \{\left\lceil s\right\rceil,\sigma \}}+\left\|v\right\|_{\dot{H}_{r,2}^{s}}^{\max\{\left\lceil s\right\rceil,\sigma \}})\\
&~~~~~~~~~~+(\left\| u\right\| _{\dot{H}_{r,2}^{s} }^{\sigma } +\left\| v\right\| _{\dot{H}_{r,2}^{s} }^{\sigma })\left\| u-v\right\| _{\dot{H}_{r,2}^{s} }
+(\left\|u\right\|_{L^{\gamma,2}}^{\sigma}+\left\|v\right\|_{L^{\gamma,2}}^{\sigma})\left\|u-v\right\|_{L^{\gamma,2}}\\
&~~~~~\lesssim\left\| u-v\right\| _{L^{\gamma,2}}^{\min \{ \sigma -\left\lceil s\right\rceil +1,1\}}
(\left\| u\right\| _{\dot{H}_{r,2}^{s} }^{\max \{\left\lceil s\right\rceil,\sigma \}}+\left\|v\right\|_{\dot{H}_{r,2}^{s}}^{\max\{\left\lceil s\right\rceil,\sigma \}})\\
&~~~~~~~~~~+(\left\| u\right\| _{\dot{H}_{r,2}^{s} }^{\sigma } +\left\| v\right\| _{\dot{H}_{r,2}^{s} }^{\sigma })\left\| u-v\right\| _{\dot{H}_{r,2}^{s} },
\end{split}\end{eqnarray}
where
$$
\frac{1}{p_1}:=\sigma \left(\frac{1}{r} -\frac{s}{d} \right)+\frac{1}{r},~\frac{1}{p_2}:=(\sigma+1) \left(\frac{1}{r} -\frac{s}{d} \right).
$$
This completes the proof of \eqref{GrindEQ__4_8_}.
If $\sigma \ge \left\lceil s\right\rceil$, then \eqref{GrindEQ__4_9_} follows directly from \eqref{GrindEQ__4_8_} by using the embedding $\dot{H}_{r,2}^{s}\hookrightarrow L^{\gamma,2}$.
\end{proof}

\begin{lemma}\label{lem 4.6.}
Let $d\in \mathbb N$, $0\le s<\frac{d}{2}$ , $0\le b<\min \{2, d-s,1+\frac{d}{2}-s \}$ and $0<\sigma\le\frac{4-2b}{d-2s}$.
If $\sigma$ is not an even integer, assume further $\sigma>\left\lceil s\right\rceil-1$.
If $s<1$, in addition, suppose further that $\sigma >1$. Let $I(\subset \mathbb R)$ be an interval and $\theta=\frac{(4-2b)-(d-2s)\sigma}{4}$.
\begin{enumerate}
  \item If either $\sigma$ is an even integer or $\sigma\ge \left\lceil s\right\rceil$, then we have
  \begin{equation} \label{GrindEQ__4_10_}
  \left\||x|^{-b}(|u|^{\sigma}u-|v|^{\sigma}v)\right\|_{L^{\tilde{\alpha}'}(I,\dot{H}_{\tilde{\beta}',2}^{s})}
  \lesssim |I|^{\theta}(\left\|u\right\|^{\sigma}_{L^{\tilde{p}}(I,\dot{H}_{\tilde{q},2}^{s})}+\left\|v\right\|^{\sigma}_{L^{\tilde{p}}(I,\dot{H}_{\tilde{q},2}^{s})})
  \left\|u-v\right\|_{L^{\tilde{p}}(I,\dot{H}_{\tilde{q},2}^{s})},
  \end{equation}
  where $(\tilde{p},\tilde{q})\in S_0$ and $(\tilde{\alpha},\tilde{\beta})\in S_0$ are as in Lemma \ref{lem 3.2.}.
  \item If $\sigma$ is not an even integer and $\left\lceil s\right\rceil>\sigma>\left\lceil s\right\rceil-1$, then we have
  \begin{eqnarray}\begin{split} \label{GrindEQ__4_11_}
  \left\||x|^{-b}(|u|^{\sigma}u-|v|^{\sigma}v)\right\|_{L^{\tilde{\alpha}'}(I,\dot{H}_{\tilde{\beta}',2}^{s})}
  &\lesssim |I|^{\theta}(\left\|u\right\|^{\sigma}_{L^{\tilde{p}}(I,\dot{H}_{\tilde{q},2}^{s})}+\left\|v\right\|^{\sigma}_{L^{\tilde{p}}(I,\dot{H}_{\tilde{q},2}^{s})})
  \left\|u-v\right\|_{L^{\tilde{p}}(I,\dot{H}_{\tilde{q},2}^{s})}\\
  &~~~~+(\left\|u\right\|^{\left\lceil s\right\rceil}_{L^{\tilde{p}}(I,\dot{H}_{\tilde{q},2}^{s})}+\left\|v\right\|^{\left\lceil s\right\rceil}_{L^{\tilde{p}}(I,\dot{H}_{\tilde{q},2}^{s})})
  \left\|u-v\right\|_{L^{\hat{p}}(I,L^{\tilde{r},2})}^{\sigma+1-\left\lceil s\right\rceil},
  \end{split}\end{eqnarray}
  where $(\tilde{p},\tilde{q})\in S_0$ and $(\tilde{\alpha},\tilde{\beta})\in S_0$ are as in Lemma \ref{lem 3.2.} and
  \begin{equation} \label{GrindEQ__4_12_}
  \frac{1}{\tilde{r}}=\frac{1}{\tilde{q}}-\frac{s}{d},~\frac{1}{\hat{p}}=\frac{1}{\tilde{p}}+\frac{\theta}{\sigma+1-\lceil s\rceil}.
  \end{equation}
\end{enumerate}
\end{lemma}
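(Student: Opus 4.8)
The plan is to reduce both estimates to the pointwise-in-time bounds already established in Lemma \ref{lem 4.5.} and then integrate in time by H\"{o}lder's inequality, mirroring exactly the structure of the proof of Lemma \ref{lem 3.2.}. The crucial observation I would make at the outset is that the admissible pairs $(\tilde p,\tilde q)\in S_0$ and $(\tilde\alpha,\tilde\beta)\in S_0$ furnished in Lemma \ref{lem 3.2.} satisfy relation \eqref{GrindEQ__3_3_}, which is precisely the hypothesis \eqref{GrindEQ__4_7_} of Lemma \ref{lem 4.5.} with $p=\tilde\beta'$ and $r=\tilde q$; moreover the exponent $\tilde r$ defined in \eqref{GrindEQ__4_12_} coincides with the $\gamma=\frac{rd}{d-rs}$ appearing there. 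Hence I could feed $u,v$ into Lemma \ref{lem 4.5.} with these exponents at almost every fixed time $t\in I$.

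For Item 1, I would first note that when $\sigma$ is an even integer the nonlinearity $|u|^{\sigma}u$ is a polynomial of degree $\sigma+1$ in $u,\bar u$, so that Lemma \ref{lem 4.3.} (through the proof of Lemma \ref{lem 4.5.}) yields \eqref{GrindEQ__4_9_}, whereas when $\sigma\ge\lceil s\rceil$ estimate \eqref{GrindEQ__4_9_} is the stated conclusion of Lemma \ref{lem 4.5.}. In either situation one has, for a.e. $t\in I$,
\[
\left\||x|^{-b}(|u|^{\sigma}u-|v|^{\sigma}v)\right\|_{\dot H^s_{\tilde\beta',2}}
\lesssim\bigl(\|u\|^{\sigma}_{\dot H^s_{\tilde q,2}}+\|v\|^{\sigma}_{\dot H^s_{\tilde q,2}}\bigr)\|u-v\|_{\dot H^s_{\tilde q,2}}.
\]
Taking the $L^{\tilde\alpha'}(I)$ norm and applying H\"{o}lder in time with $\theta=\frac{1}{\tilde\alpha'}-\frac{\sigma+1}{\tilde p}=\frac{(4-2b)-(d-2s)\sigma}{4}$, the identity \eqref{GrindEQ__3_9_}, would then produce exactly \eqref{GrindEQ__4_10_}.

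For Item 2, where $\sigma$ is not an even integer and $\lceil s\rceil>\sigma>\lceil s\rceil-1$, I would instead invoke the full estimate \eqref{GrindEQ__4_8_} of Lemma \ref{lem 4.5.}. Since $\sigma<\lceil s\rceil$ here, the exponents simplify to $\max\{\lceil s\rceil,\sigma\}=\lceil s\rceil$ and $\min\{\sigma-\lceil s\rceil+1,1\}=\sigma+1-\lceil s\rceil$, so that for a.e. $t\in I$,
\begin{align*}
\left\||x|^{-b}(|u|^{\sigma}u-|v|^{\sigma}v)\right\|_{\dot H^s_{\tilde\beta',2}}
&\lesssim\|u-v\|^{\sigma+1-\lceil s\rceil}_{L^{\tilde r,2}}\bigl(\|u\|^{\lceil s\rceil}_{\dot H^s_{\tilde q,2}}+\|v\|^{\lceil s\rceil}_{\dot H^s_{\tilde q,2}}\bigr)\\
&\quad+\bigl(\|u\|^{\sigma}_{\dot H^s_{\tilde q,2}}+\|v\|^{\sigma}_{\dot H^s_{\tilde q,2}}\bigr)\|u-v\|_{\dot H^s_{\tilde q,2}}.
\end{align*}
The second summand would be integrated in time exactly as in Item 1, yielding the $|I|^{\theta}$-term. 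For the first summand I would apply H\"{o}lder in time with the splitting $\frac{1}{\tilde\alpha'}=\frac{\lceil s\rceil}{\tilde p}+\frac{\sigma+1-\lceil s\rceil}{\hat p}$.

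The main, and essentially only delicate, point is the bookkeeping of exponents in this last step. Using the definition $\frac{1}{\hat p}=\frac{1}{\tilde p}+\frac{\theta}{\sigma+1-\lceil s\rceil}$ from \eqref{GrindEQ__4_12_} one checks that
\[
\frac{\lceil s\rceil}{\tilde p}+\frac{\sigma+1-\lceil s\rceil}{\hat p}
=\frac{\sigma+1}{\tilde p}+\theta=\frac{1}{\tilde\alpha'},
\]
so that H\"{o}lder's inequality closes \emph{without} any residual power of $|I|$ on the first term, producing precisely \eqref{GrindEQ__4_11_}; this is the structural reason the two summands in \eqref{GrindEQ__4_11_} carry different time weights. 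I would also record that Item 2 is vacuous unless $s\ge 1$, since for $0\le s<1$ the standing hypothesis $\sigma>1\ge\lceil s\rceil$ rules out $\sigma<\lceil s\rceil$, which keeps the application of Lemma \ref{lem 4.5.} consistent with its hypotheses.
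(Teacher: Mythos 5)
Your proposal is correct and follows essentially the same route as the paper: the paper's proof of Lemma \ref{lem 4.6.} consists precisely of invoking the exponent relations \eqref{GrindEQ__3_3_}, \eqref{GrindEQ__3_9_} and \eqref{GrindEQ__4_12_} together with Lemma \ref{lem 4.5.} and H\"older's inequality in time, which is exactly your reduction. Your verification that $\frac{\lceil s\rceil}{\tilde p}+\frac{\sigma+1-\lceil s\rceil}{\hat p}=\frac{\sigma+1}{\tilde p}+\theta=\frac{1}{\tilde\alpha'}$ (explaining the absence of an $|I|^{\theta}$ weight on the second term of \eqref{GrindEQ__4_11_}) and your remark that Item 2 is vacuous for $s<1$ are exactly the bookkeeping the paper leaves implicit.
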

\begin{proof}
Using \eqref{GrindEQ__3_3_}, \eqref{GrindEQ__3_9_}, \eqref{GrindEQ__4_12_}, Lemma \ref{lem 4.5.} and H\"{o}lder's inequality, we immediately get \eqref{GrindEQ__4_10_} and \eqref{GrindEQ__4_11_}.
\end{proof}

\begin{proof}[{\bf Proof of Theorem \ref{thm 1.4.}}]
Since $u$, $u_{n} $ satisfy the following integral equations:
\[u(t)=S(t)u_{0}-i\int_{0}^{t}S(t-\tau)(c|x|^{-a}u(\tau)+\lambda|x|^{-b} |u(\tau)|^{\sigma}u(\tau))d\tau  ,\]
\[u_{n} (t)=S(t)u_{0,n} -i \int _{0}^{t}S(t-\tau)(c|x|^{-a}u_n(\tau)+\lambda |x|^{-b} |u_n(\tau)|^{\sigma}u_n(\tau))d\tau  ,\]
respectively, we have
\begin{eqnarray}\begin{split} \nonumber
u_{n} (t)-u(t)&=S(t) \left(u_{0,n} -u_{0} \right)-ic \int _{0}^{t}S(t-\tau) |x|^{-a} \left(|u_n(\tau)|-u(\tau)\right)d\tau\\
&-i\lambda \int _{0}^{t}S(t-\tau) |x|^{-b} \left(|u_n(\tau)|^{\sigma}u_n(\tau)-|u(\tau)|^{\sigma}u(\tau)\right)d\tau.
\end{split}\end{eqnarray}
We are to prove that there exits $T>0$ sufficiently small such that as $n\to \infty $,
\begin{equation} \label{GrindEQ__4_13_}
u_{n} \to u ~~\textrm{in}~~ L^{p} ([-T,T],\, H_{q,2}^{s}(\mathbb R^{d})),
\end{equation}
for every admissible pair $(p,q)$.
If this has been done, then the result follows by iterating this property to cover any compact subset of $(-T_{\min } ,T_{\max })$ in the $H^s$-subcritical case (see e.g. Chapter 3 or 4 of \cite{C03}) or a standard compact argument in the $H^s$-critical case (see e.g. Subsection 3.2 of \cite{DYC13}).
We divide the proof of \eqref{GrindEQ__4_13_} in two cases: $H^s$-subcritical case and $H^s$-critical case.

\textbf{Case 1.} We consider the $H^s$-subcritical case $\sigma<\frac{4-2b}{d-2s}$.
Since $u_{0,n}\to u_{0}$ in $H^s$, we have
$$
\left\|u_{0,n}\right\|_{H^s}\le 2\left\|u_{0}\right\|_{H^s},
$$
for $n$ large enough.
Using the argument in the proof of Theorem \ref{thm 1.2.}, we can construct solutions $u$ and $u_{n}$ ($n$ sufficiently large) in the set $(D,d)$ given in Theorem \ref{thm 1.2.}, which implies that there exist $T>0$ such that $T<T_{\max}(u_{0}),T_{\min}(u_{0})$ and $T<T_{\max}(u_{0,n}),T_{\min}(u_{0,n})$ for $n$ large enough. Furthermore, for all sufficiently large $n$, we have
$$
\left\|u_{n}\right\|_{L^{\tilde{p}} (I,H^{s}_{\tilde{q},2})\cap L^{\bar{p}} (I,H^{s}_{\bar{q},2})}\le M,
$$
where $I=[-T,T]$ and $(\tilde{p},\tilde{q})\in S_0$, $(\bar{p},\bar{q})\in S_0$, $M>0$ are as in the proof of Theorem \ref{thm 1.2.}.

\textit{Case 1.1.} We consider the case $\sigma\ge \lceil s\rceil$ or the case that $\sigma$ is an even integer. Using Lemmas \ref{lem 2.13.}, \ref{lem 3.2.}, \ref{lem 3.3.}, \ref{lem 4.6.} and \eqref{GrindEQ__3_21_}, we have
\begin{eqnarray}\begin{split}\label{GrindEQ__4_14_}
\left\|u_n-u\right\|_{L^{\tilde{p}} (I,H^{s}_{\tilde{q},2})\cap L^{\bar{p}} (I,H^{s}_{\bar{q},2})}&\lesssim
\left\|u_{0,n}-u_0\right\|_{H^{s}}+
\left\| |x|^{-b} \left(|u_n|^{\sigma}u_n-|u|^{\sigma}u\right)\right\|_{{L^{\tilde{\alpha}'} (I,H_{\tilde{\beta}',2}^s)}}\\
&~~~+|c|\left\| |x|^{-a} (u_n-u)\right\|_{{L^{\bar{\alpha}'} (I,H_{\bar{\beta}',2}^s)}}\\
&\lesssim |I|^{\frac{(4-2b)-(d-2s)\sigma}{4}}(\left\|u_n\right\|^{\sigma}_{L^{\tilde{p}}(I,H_{\tilde{q},2}^{s})}+\left\|u\right\|^{\sigma}_{L^{\tilde{p}}(I,H_{\tilde{q},2}^{s})})
\left\|u_n-u\right\|_{L^{\tilde{p}}(I,H_{\tilde{q},2}^s)}\\
&~~~+|c||I|^{\frac{2-a}{2}}\left\|u_n-u\right\|_{L^{\bar{p}}(I,H_{\bar{q},2}^s)}+\left\|u_{0,n}-u_0\right\|_{H^{s}}\\
&\le (2C|I|^{\frac{(4-2b)-(d-2s)\sigma}{4}}M^{\sigma}+C|c||I|^{\frac{2-a}{2}})\left\|u_n-u\right\|_{L^{\tilde{p}} (I,H^{s}_{\tilde{q},2})\cap L^{\bar{p}} (I,H^{s}_{\bar{q},2})}\\
&~~~+\left\|u_{0,n}-u_0\right\|_{H^{s}}\\
&\le \frac{1}{2}\left\|u_n-u\right\|_{L^{\tilde{p}} (I,H^{s}_{\tilde{q},2})\cap L^{\bar{p}} (I,H^{s}_{\bar{q},2})}+\left\|u_{0,n}-u_0\right\|_{H^{s}},
\end{split}\end{eqnarray}
which shows that
\begin{equation}\label{GrindEQ__4_15_}
\left\|u_n-u\right\|_{L^{\tilde{p}} (I,H^{s}_{\tilde{q},2})\cap L^{\bar{p}} (I,H^{s}_{\bar{q},2})}\to 0,~ \textnormal{as}~n\to \infty.
\end{equation}
Using Lemma \ref{lem 2.13.} (Strichartz estimates) and \eqref{GrindEQ__4_15_} and repeating the same argument, we have, for any admissible pair $(p,q)$,
\begin{eqnarray}\begin{split}\nonumber
\left\|u_n-u\right\|_{L^{p} (I,H^{s}_{q,2})}&\lesssim
\left\|u_{0,n}-u_0\right\|_{H^{s}}+\left\| |x|^{-b} \left(|u_n|^{\sigma}u_n-|u|^{\sigma}u\right)\right\|_{{L^{\tilde{\alpha}'} (I,H_{\tilde{\beta}',2}^s)}}\\
&~~~+|c|\left\| |x|^{-a} (u_n-u)\right\|_{{L^{\bar{\alpha}'} (I,H_{\bar{\beta}',2}^s)}}\\
&\le \frac{1}{2}\left\|u_n-u\right\|_{L^{\tilde{p}} (I,H^{s}_{\tilde{q},2})\cap L^{\bar{p}} (I,H^{s}_{\bar{q},2})}+\left\|u_{0,n}-u_0\right\|_{H^{s}}\to 0
~\textnormal{as}~n\to \infty.
\end{split}\end{eqnarray}

\textit{Case 1.2.} We consider the case that $\sigma$ is not an even integer and $\lceil s\rceil>\sigma> \lceil s\rceil-1$.
Repeating the same argument as in the proof of \eqref{GrindEQ__4_14_}, we have
\begin{eqnarray}\begin{split}\label{GrindEQ__4_16_}
\left\|u_n-u\right\|_{L^{\tilde{p}} (I,H^{s}_{\tilde{q},2})\cap L^{\bar{p}} (I,H^{s}_{\bar{q},2})}
&\le \frac{1}{2}\left\|u_n-u\right\|_{L^{\tilde{p}} (I,H^{s}_{\tilde{q},2})\cap L^{\bar{p}} (I,H^{s}_{\bar{q},2})}+\left\|u_{0,n}-u_0\right\|_{H^{s}}\\
&~~~~+(\left\|u_n\right\|^{\left\lceil s\right\rceil}_{L^{\tilde{p}}(I,\dot{H}_{\tilde{q},2}^{s})}+\left\|u\right\|^{\left\lceil s\right\rceil}_{L^{\tilde{p}}(I,\dot{H}_{\tilde{q},2}^{s})})
  \left\|u_n-u\right\|_{L^{\hat{p}}(I,L^{\tilde{r},2})}^{\sigma+1-\left\lceil s\right\rceil},
\end{split}\end{eqnarray}
where $\hat{p}$ and $\tilde{r}$ are given in \eqref{GrindEQ__4_12_}. Since $(\tilde{p},\tilde{q})\in S_0$, we can take $\eta >0$ sufficiently small such that
\begin{equation} \label{GrindEQ__4_17_}
\tilde{q}_1:=\tilde{q}+\eta,~(\tilde{p}_1,\tilde{q}_1)\in S_0,~\hat{p}<\tilde{p}_1<\tilde{p},~s-\frac{\eta d}{\tilde{q} \tilde{q}_1} >0.
\end{equation}
By \eqref{GrindEQ__4_17_} and Lemma \ref{lem 2.4.}, there holds the embedding $\dot{H}_{\tilde{q}_1}^{s-\frac{\eta d}{\tilde{q} \tilde{q}_1}}\hookrightarrow L^{\tilde{r}}$.
Hence, it follows from \eqref{GrindEQ__4_17_}, H\"{o}lder's inequality and Theorem \ref{thm 1.2.} that
\begin{equation} \label{GrindEQ__4_18_}
\left\|u_n-u\right\|_{L^{\hat{p}}(I,L^{\tilde{r},2})}\le (2T)^{\frac{1}{\hat{p}}-\frac{1}{\tilde{p}_1}}\left\|u_n-u\right\|_{L^{\tilde{p}_1}(I,H_{\tilde{q}_1,2}^{s-\frac{\eta d}{\tilde{q} \tilde{q}_1}})} \to 0,~\textnormal{as}~n\to \infty.
\end{equation}
Using \eqref{GrindEQ__4_16_}, \eqref{GrindEQ__4_18_} and the same argument as in Case 1.1, we can get the desired result.

\textbf{Case 2.} Next, we consider the $H^s$-critical case $\sigma=\frac{4-2b}{d-2s}$.
Since $u_{0,n}\to u_{0}$ in $H^s$, it follows from Lemma \ref{lem 2.13.} (Strichartz estimates) that
\begin{equation}\label{GrindEQ__4_19_}
\left\|S(t) u_{0,n}\right\|_{L^{p}([-T,T], H_{q,2}^{s})}\le 2\left\|S(t) u_{0}\right\|_{L^{p}([-T,\;T], H_{q,2}^{s})}.
\end{equation}
for any admissible pair $(p,q)$ and $n$ large enough. Hence, by using the argument similar to that in Case 1.1, we can get the desired result.
\end{proof}

\section{Global existence of $H^1$-solution}\label{sec 5.}
In this section, we prove Theorems \ref{thm 1.6.}.
\begin{proof}[{\bf Proof of Theorem \ref{thm 1.6.}}]
Thanks to the local well-posedness in Proposition \ref{prp 1.1.} and the conservation of mass, the global existence follows if we can show that there exists $C>0$ independent of $t$ such that
$$
\left\|\nabla u(t)\right\|_{L^2}\le C,$$
for any $t$ in the existence time.

{\bf Proof of Item 1.} We consider the defocusing case $\lambda=1$.
If $c\ge0$, then it follows from the conservation of energy (see \eqref{GrindEQ__1_20_} for the definition of energy) that
$$
\left\|\nabla u(t)\right\|_{L^2}\le \sqrt{2E_{b,c}(u_0)},
$$
for any $t$ in the existence time. Hence, it suffices to consider the case $c<0$.
By the definition of energy \eqref{GrindEQ__1_20_}, we have
\begin{eqnarray}\begin{split}\label{GrindEQ__5_1_}
E_{b,c}(u(t))&=\frac{1}{2}\left\|\nabla u(t)\right\|_{L^2}^2-\frac{|c|}{2}\left\||x|^{-a} |u(t)|^2\right\|_{L^1}+\frac{1}{\sigma+2}\left\||x|^{-b}|u(t)|^{\sigma+2}\right\|_{L^1}\\
&\ge \frac{1}{2}\left\|\nabla u(t)\right\|_{L^2}^2-\frac{|c|}{2}\left\||x|^{-a} |u(t)|\right\|_{L^1}.
\end{split}\end{eqnarray}
On the other hand, it follows from Corollary \ref{cor 2.7.}, Lemma \ref{lem 2.12.} and Young's inequality
\footnote[2]{\ Let $a$, $b$ be non-negative real numbers and $p$, $q$ be positive real numbers satisfying $\frac{1}{p}+\frac{1}{q}=1$. Then for any $\varepsilon$, we have $ab\lesssim\varepsilon a^{p}+\varepsilon^{-\frac{q}{p}}b^{q}$.}
\begin{eqnarray}\begin{split}\label{GrindEQ__5_2_}
\left\||x|^{-a} |u(t)|\right\|_{L^1}&=\left\||x|^{-\frac{a}{2}} |u(t)|\right\|_{L^2}^2\le C\left\|u(t)\right\|_{\dot{H}_2^{\frac{a}{2}}}^2
\le C\left\|\nabla u(t)\right\|_{L^2}^{a}\left\|u(t)\right\|_{L^2}^{2-a}\\
&\le \frac{1}{2|c|}\left\|\nabla u(t)\right\|_{L^2}^2+C(a,|c|)\left\|u(t)\right\|_{L^2}^{2}.
\end{split}\end{eqnarray}
\eqref{GrindEQ__5_1_}, \eqref{GrindEQ__5_2_} and the conservation of mass and energy imply that
we have
$$
\left\|\nabla u(t)\right\|_{L^2}^2 \le 4E_{b,c}(u_0)+C(a,|c|)M(u_0),
$$
for any $t$ in the existence time. This completes the proof of Item 1.

It remains to consider the focusing case $\lambda=-1$.

{\bf Proof of Item 2.} We consider the mass-subcritical case $\sigma<\frac{4-2b}{d}$.
If $c\ge0$, then it follows from Lemma \ref{lem 2.14.} (Gagliardo-Nirenberg inequality) that
\begin{eqnarray}\begin{split}\label{GrindEQ__5_3_}
E_{b,c}(u(t))&=\frac{1}{2}\left\|\nabla u(t)\right\|_{L^2}^2+\frac{c}{2}\left\||x|^{-a} |u(t)|^2\right\|_{L^1}-\frac{1}{\sigma+2}\left\||x|^{-b}|u(t)|^{\sigma+2}\right\|_{L^1}\\
&\ge \frac{1}{2}\left\|\nabla u(t)\right\|_{L^2}^2-\frac{C_{\rm GN}}{\sigma+2}\left\|\nabla u(t)\right\|_{L^2}^{\frac{d\sigma+2b}{2}}
\left\|u(t)\right\|_{L^{2}}^{\frac{4-2b-\sigma(d-2)}{2}}.
\end{split}\end{eqnarray}
Since $\sigma<\frac{4-2b}{d}$, we use Young's inequality and the conservation of mass to get that
\begin{equation}\label{GrindEQ__5_4_}
\frac{C_{\rm GN}}{\sigma+2}\left\|\nabla u(t)\right\|_{L^2}^{\frac{d\sigma+2b}{2}}
\left\|u(t)\right\|_{L^{2}}^{\frac{4-2b-\sigma(d-2)}{2}}\le \varepsilon \left\|\nabla u(t)\right\|_{L^2}^{2}+C(\varepsilon, M(u_0)),
\end{equation}
for any $\varepsilon>0$. \eqref{GrindEQ__5_3_}, \eqref{GrindEQ__5_4_} and the conservation of energy imply that
$$
\left(\frac{1}{2}-\varepsilon\right)\left\|\nabla u(t)\right\|_{L^2}^2\le E_{b,c}(u_0)+C(\varepsilon, M(u_0)).
$$
Taking $\varepsilon\in (0,\frac{1}{2})$, we get the uniform bound on $\left\|\nabla u(t)\right\|_{L^2}$.

If $c<0$, then it follows from  Lemma \ref{lem 2.14.} (Gagliardo-Nirenberg inequality), \eqref{GrindEQ__5_4_} and \eqref{GrindEQ__5_2_} that
\begin{eqnarray}\begin{split}\label{GrindEQ__5_5_}
E_{b,c}(u(t))&=\frac{1}{2}\left\|\nabla u(t)\right\|_{L^2}^2-\frac{|c|}{2}\left\||x|^{-a} |u(t)|^2\right\|_{L^1}-\frac{1}{\sigma+2}\left\||x|^{-b}|u(t)|^{\sigma+2}\right\|_{L^1}\\
&\ge \left(\frac{1}{4}-\varepsilon\right)\left\|\nabla u(t)\right\|_{L^2}^2-C(a,|c|)M(u_0)-C(\varepsilon, M(u_0)).
\end{split}\end{eqnarray}
Taking $\varepsilon\in (0,\frac{1}{4})$, the desired result follows from \eqref{GrindEQ__5_5_} and the conservation of energy.

{\bf Proof of Item 3.} We consider the case mass-critical case $\sigma=\frac{4-2b}{d}$.

Let us consider the case $c\ge0$. It follows from Lemma \ref{lem 2.14.} (Gagliardo-Nirenberg inequality) with \eqref{GrindEQ__2_7_} and the conservation of mass that
\begin{eqnarray}\begin{split}\label{GrindEQ__5_6_}
E_{b,c}(u(t))&=\frac{1}{2}\left\|\nabla u(t)\right\|_{L^2}^2+\frac{c}{2}\left\||x|^{-a} |u(t)|^2\right\|_{L^1}-\frac{d}{2d+4-2b}\left\||x|^{-b}|u(t)|^{\frac{4-2b}{d}+2}\right\|_{L^1}\\
&\ge \frac{1}{2}\left\|\nabla u(t)\right\|_{L^2}^2-\frac{1}{2}\left(\frac{\left\|u(t)\right\|_{L^{2}}}{\left\|Q\right\|_{L^{2}}}\right)^{\frac{4-2b}{d}}\left\|\nabla u(t)\right\|_{L^2}^{2}\\
&=\frac{1}{2}\left(1-\left(\frac{\left\|u(t)\right\|_{L^{2}}}{\left\|Q\right\|_{L^{2}}}\right)\right)\left\|\nabla u(t)\right\|_{L^2}^{2}.
\end{split}\end{eqnarray}
Since $\left\|u_0\right\|_{L^2}<\left\|Q\right\|_{L^2}$, \eqref{GrindEQ__5_6_} and the conservation of energy show that the corresponding solutin $u$ exists globally in time.

Let us consider the case $c<0$. Using the same argument as in the proof of \eqref{GrindEQ__5_2_}, we also have
\begin{eqnarray}\begin{split}\nonumber
\left\||x|^{-a} |u(t)|\right\|_{L^1}\le \frac{\varepsilon}{|c|}\left\|\nabla u(t)\right\|_{L^2}^2+C(a,|c|,\varepsilon)\left\|u(t)\right\|_{L^2}^{2},
\end{split}\end{eqnarray}
for any $\varepsilon>0$. Using this inequality, Lemma \ref{lem 2.14.} (Gagliardo-Nirenberg inequality) and the conservation of mass, we have
\begin{eqnarray}\begin{split}\label{GrindEQ__5_7_}
E_{b,c}(u(t))&=\frac{1}{2}\left\|\nabla u(t)\right\|_{L^2}^2-\frac{|c|}{2}\left\||x|^{-a} |u(t)|^2\right\|_{L^1}-\frac{d}{2d+4-2b}\left\||x|^{-b}|u(t)|^{\frac{4-2b}{d}+2}\right\|_{L^1}\\
&\ge \frac{1}{2}\left\|\nabla u(t)\right\|_{L^2}^2-\frac{\varepsilon}{2}\left\|\nabla u(t)\right\|_{L^2}^2-C(a,|c|,\varepsilon)\left\|u(t)\right\|_{L^2}^{2}\\
&~~~~-\frac{1}{2}\left(\frac{\left\|u(t)\right\|_{L^{2}}}{\left\|Q\right\|_{L^{2}}}\right)^{\frac{4-2b}{d}}\left\|\nabla u(t)\right\|_{L^2}^{2}\\
&=\frac{1}{2}\left(1-\varepsilon-\left(\frac{\left\|u(t)\right\|_{L^{2}}}{\left\|Q\right\|_{L^{2}}}\right)\right)\left\|\nabla u(t)\right\|_{L^2}^{2}-C(a,|c|,\varepsilon)M(u_0).
\end{split}\end{eqnarray}
Since $\left\|u_0\right\|_{L^2}<\left\|Q\right\|_{L^2}$,  we can get the desired result from \eqref{GrindEQ__5_7_} by taking $\varepsilon>0$ sufficiently small.

{\bf Proof of Item 4.} Let us consider the intercritical case $\frac{4-2b}{d}<\sigma<\sigma_{\rm c}(1,b)$. Since $c\ge0$, it follows from Lemma \ref{lem 2.14.} (Gagliardo-Nirenberg inequality) that
\begin{eqnarray}\begin{split}\label{GrindEQ__5_8_}
&E_{b,c}(u(t))[M(u(t))]^{\gamma_{\rm c}}\\
&~~~~~~~~~=\frac{1}{2}\left\|\nabla u(t)\right\|_{L^2}^2\left\|u(t)\right\|_{L^2}^{2\gamma_{\rm c}}-\frac{|c|}{2}\left\||x|^{-a} |u(t)|^2\right\|_{L^1}\left\|u(t)\right\|_{L^2}^{2\gamma_{\rm c}}\\
&~~~~~~~~~~~~~~~-\frac{1}{\sigma+2}\left\||x|^{-b}|u(t)|^{\sigma+2}\right\|_{L^1}\left\|u(t)\right\|_{L^2}^{2\gamma_{\rm c}}\\
&~~~~~~~~~\ge \frac{1}{2}\left(\left\|\nabla u(t)\right\|_{L^2}\left\|u(t)\right\|_{L^2}^{\gamma_{\rm c}}\right)^2
-\frac{C_{\rm GN}}{\sigma+2}\left\|\nabla u(t)\right\|_{L^2}^{\frac{d\sigma+2b}{2}}
\left\|u(t)\right\|_{L^{2}}^{\frac{4-2b-\sigma(d-2)}{2}+2\gamma_c}\\
&~~~~~~~~~=f\left(\left\|\nabla u(t)\right\|_{L^2}\left\|u(t)\right\|_{L^2}^{\gamma_{\rm c}}\right),
\end{split}\end{eqnarray}
where $f(x):=\frac{1}{2}x^2-\frac{C_{\rm GN}}{\sigma+2}x^{\frac{d\sigma+2b}{2}}$.
On the other hand, \eqref{GrindEQ__2_6_} and \eqref{GrindEQ__2_8_} show that
\begin{equation}\label{GrindEQ__5_9_}
f\left(\left\|\nabla Q\right\|_{L^2}\left\|Q\right\|_{L^2}^{\gamma_{\rm c}}\right)=E_{b}(Q)[M(Q)]^{\gamma_c}.
\end{equation}
Using \eqref{GrindEQ__5_8_}, \eqref{GrindEQ__5_9_}, \eqref{GrindEQ__1_22_} and the conservation of mass and energy, we get
\begin{equation}\label{GrindEQ__5_10_}
f\left(\left\|\nabla u(t)\right\|_{L^2}\left\|u(t)\right\|_{L^2}^{\gamma_{\rm c}}\right)<f\left(\left\|\nabla Q\right\|_{L^2}\left\|Q\right\|_{L^2}^{\gamma_{\rm c}}\right),
\end{equation}
for any $t$ in the existence time. \eqref{GrindEQ__5_10_}, the second condition in \eqref{GrindEQ__1_22_} and the continuity argument imply that
$$
\left\|\nabla u(t)\right\|_{L^2}\left\|u(t)\right\|_{L^2}^{\gamma_{\rm c}}<\left\|\nabla Q\right\|_{L^2}\left\|Q\right\|_{L^2}^{\gamma_{\rm c}},
$$
for any $t$ in the existence time. The result follows from the above inequality and the conservation of mass.
\end{proof}
\section{Blow-up of $H^1$-solution}\label{sec 6.}
In this section, we prove Theorems \ref{thm 1.7.} and \ref{thm 1.9.}.
\subsection{Virial estimates}
In this subsection, we derive the virial estimates related to \eqref{GrindEQ__1_1_} in the focusing case which are useful to study the finite or infinite time blow-up of $H^1$-solution.
To this end, we recall the Gagliardo-Nirenberg inequality. See Corollary 1.3 of \cite{WHHG11} for example.
\begin{lemma}[\cite{WHHG11}]\label{lem 6.1.}
Let $1<p,p_0,p_1<\infty$, $s,s_1\in \mathbb R$, $0\le \theta\le 1$. The the fractional GN inequality of the following type
$$
\left\|u\right\|_{\dot{H}_p^s}\lesssim \left\|u\right\|_{L^{p_0}}^{1-\theta}\left\|u\right\|_{\dot{H}_{p_1}^{s_1}}^{\theta}
$$
holds if and only if
$$
\frac{d}{p}-s=(1-\theta)\frac{d}{p_0}+\theta\left(\frac{d}{p_1}-s_1\right),~s\le \theta s_1.
$$
\end{lemma}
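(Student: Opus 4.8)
The plan is to establish the two stated conditions as necessary by testing the inequality against explicit one-parameter families, and then to prove sufficiency by reducing to an exact interpolation estimate followed by a Sobolev embedding. For the latter I would treat the endpoint case $s=\theta s_1$ by complex interpolation and then descend to the subcritical case $s<\theta s_1$ via Lemma \ref{lem 2.4.}.

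For necessity, first exploit the scaling $u_\lambda(x):=u(\lambda x)$, $\lambda>0$. A direct computation gives $\|u_\lambda\|_{\dot H_p^s}=\lambda^{s-d/p}\|u\|_{\dot H_p^s}$, $\|u_\lambda\|_{L^{p_0}}=\lambda^{-d/p_0}\|u\|_{L^{p_0}}$ and $\|u_\lambda\|_{\dot H_{p_1}^{s_1}}=\lambda^{s_1-d/p_1}\|u\|_{\dot H_{p_1}^{s_1}}$. Substituting these into the inequality and sending $\lambda\to 0$ and $\lambda\to\infty$ forces the powers of $\lambda$ on the two sides to coincide, which is precisely the dimensional identity $\frac{d}{p}-s=(1-\theta)\frac{d}{p_0}+\theta\big(\frac{d}{p_1}-s_1\big)$. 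For the second condition I would test against a high-frequency modulation: fix a Schwartz $\phi$ with $\widehat{\phi}$ supported in $\{|\xi|\sim 1\}$ and put $\phi_N(x):=e^{iNx_1}\phi(x)$, so that $\widehat{\phi_N}$ concentrates near $\{|\xi|\sim N\}$. Then $\|\phi_N\|_{\dot H_p^s}\sim N^{s}$ and $\|\phi_N\|_{\dot H_{p_1}^{s_1}}\sim N^{s_1}$ while $\|\phi_N\|_{L^{p_0}}\sim 1$, so the inequality gives $N^{s}\lesssim N^{\theta s_1}$ uniformly for large $N$, forcing $s\le\theta s_1$.

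For sufficiency, let $p_\theta$ be defined by $\frac{1}{p_\theta}=\frac{1-\theta}{p_0}+\frac{\theta}{p_1}$. Complex interpolation of the homogeneous Sobolev spaces yields $[\,L^{p_0},\dot H_{p_1}^{s_1}\,]_\theta=\dot H_{p_\theta}^{\theta s_1}$, and the fundamental interpolation inequality gives
\begin{equation} \nonumber
\|u\|_{\dot H_{p_\theta}^{\theta s_1}}\lesssim \|u\|_{L^{p_0}}^{1-\theta}\,\|u\|_{\dot H_{p_1}^{s_1}}^{\theta}.
\end{equation}
When $s=\theta s_1$ the dimensional identity reduces to $\frac1p=\frac1{p_\theta}$ and this is already the claim. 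When $s<\theta s_1$, the dimensional identity rearranges to $\frac dp-s=\frac d{p_\theta}-\theta s_1$, while $s\le\theta s_1$ gives $p_\theta\le p$; hence the Sobolev embedding $\dot H_{p_\theta}^{\theta s_1}\hookrightarrow \dot H_p^{s}$ (which follows from Lemmas \ref{lem 2.1.} and \ref{lem 2.4.}) composes with the displayed inequality to give the result.

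The hard part will be the complex interpolation identity for the \emph{homogeneous} spaces: since $\dot H_p^{s}$ is defined only modulo polynomials, one must fix a realization of these quotient spaces before the identity $[\,L^{p_0},\dot H_{p_1}^{s_1}\,]_\theta=\dot H_{p_\theta}^{\theta s_1}$ can be invoked cleanly. A route that sidesteps the interpolation machinery is a direct Littlewood--Paley argument: split $u=u_{\le N}+u_{>N}$, bound the low-frequency part by Bernstein's inequality in terms of $\|u\|_{L^{p_0}}$ and the high-frequency part in terms of $\|u\|_{\dot H_{p_1}^{s_1}}$, and optimize over the cutoff $N$; the dimensional identity guarantees that the exponent falling on $\|u\|_{\dot H_{p_1}^{s_1}}$ is exactly $\theta$. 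The cost of this alternative is that the convergence of the dyadic sums and the admissible direction of Bernstein's inequality require a case distinction on the ordering of $p$, $p_0$, $p_1$, which is where the bookkeeping becomes delicate.
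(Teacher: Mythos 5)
The paper does not prove this statement at all: Lemma \ref{lem 6.1.} is quoted verbatim from Corollary 1.3 of \cite{WHHG11}, so there is no internal argument to compare yours against. Judged on its own terms, your necessity direction is complete and correct: the scaling computation $\left\|u_\lambda\right\|_{\dot H^s_p}=\lambda^{s-d/p}\left\|u\right\|_{\dot H^s_p}$ forces the dimensional balance, and the modulated bump $\phi_N$ forces $s\le\theta s_1$ (you should just note that the uniformity of $\left\|\phi_N\right\|_{\dot H^s_p}\sim N^s$ rests on a Mikhlin-type bound for the multiplier $|\xi|^s N^{-s}$ on the annulus $|\xi|\sim N$, and that a test function with all three norms finite and nonzero exists). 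Your reduction of sufficiency to the endpoint convexity inequality $\left\|u\right\|_{\dot H^{\theta s_1}_{p_\theta}}\lesssim\left\|u\right\|_{L^{p_0}}^{1-\theta}\left\|u\right\|_{\dot H^{s_1}_{p_1}}^{\theta}$ followed by the Sobolev embedding $\dot H^{\theta s_1}_{p_\theta}\hookrightarrow\dot H^s_p$ is also sound; the exponent bookkeeping ($p_\theta\le p$ and $\theta s_1-d/p_\theta=s-d/p$) checks out, and the embedding is exactly Lemmas \ref{lem 2.1.} and \ref{lem 2.4.}.

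The genuine gap is that the endpoint inequality itself is the entire content of the lemma, and neither of your two routes to it is closed. The complex interpolation identity $[L^{p_0},\dot H^{s_1}_{p_1}]_\theta=\dot H^{\theta s_1}_{p_\theta}$ for \emph{homogeneous} spaces is true for $1<p_0,p_1<\infty$ but requires the retraction argument onto weighted $\ell^2$-valued $L^p$ spaces together with a fixed realization of the quotient modulo polynomials; you flag this but do not carry it out, and it is precisely the nontrivial step. More seriously, the Littlewood--Paley alternative you offer as an escape hatch does not work at the endpoint $s=\theta s_1$, $p=p_\theta$: bounding $\left\|P_j u\right\|_{L^p}$ by the minimum of the two Bernstein estimates and optimizing over the cutoff only controls $\sup_j 2^{js}\left\|P_j u\right\|_{L^p}$, i.e.\ the $\dot B^s_{p,\infty}$ norm, which is strictly weaker than the $\dot H^s_p=\dot F^s_{p,2}$ norm you need on the left-hand side; the geometric-series gain that would upgrade this is exactly what vanishes at the endpoint. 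So the ``elementary'' route proves the lemma only in the strict regime $s<\theta s_1$ with the additional ordering constraints you mention, and the endpoint must go through interpolation (or a maximal-function characterization of Triebel--Lizorkin spaces). For what it is worth, the only instance the paper actually uses (in \eqref{GrindEQ__6_15_}, with $s=0$, $s_1=1$, $p_1=2$) is covered by the paper's own Lemma \ref{lem 2.12.} combined with Lemma \ref{lem 2.4.}, which is exactly your reduction with the endpoint inequality supplied by \cite{AKR24}; citing that would close your argument for the case that matters here, though not for the full range $s,s_1\in\mathbb R$ claimed in the statement.
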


Given a real valued function $\omega$, we define the virial potential by
\begin{equation}\nonumber
V_{\omega}(t):=\int{\omega(x)\left|u(t, x)\right|^{2}dx.}
\end{equation}
A simple computation shows that the following result holds.
\begin{lemma}[\cite{D21A}]\label{lem 6.2.}
Let $V,W:\mathbb R^d\to \mathbb R$. If $u$ is a (sufficiently smooth and decaying) solution to
$iu_t+\Delta u=Vu+W|u|^{\sigma}u$,
then it holds that
\begin{equation}\label{GrindEQ__6_1_}
\frac{d(t)}{dt}V_{\omega}=2\int{\nabla \omega \cdot {\rm Im}(\bar{u}(t)\nabla u(t))dx}
\end{equation}
and
\begin{eqnarray}\begin{split}\label{GrindEQ__6_2_}
\frac{d^{2}}{dt^{2}}V_{\omega}(t)&=-\int{\Delta^{2}a(x)|u(t)|^{2}dx+4\sum_{j,k=1}^{d}
{\int{\partial_{jk}^{2}a(x)\textnormal{Re}(\partial_{k}u(t)\partial_{j}\bar{u}(t))dx}}}\\
&~~~-2\int{\nabla \omega \cdot\nabla V|u(t)|^{2}dx}+\frac{2\sigma}{\sigma+2}\int{\Delta \omega W|u(t)|^{\sigma+2}dx}\\
&~~~-\frac{4}{\sigma+2}\int{\nabla \omega \cdot \nabla W |u(t)|^{\sigma+2}dx}.
\end{split}\end{eqnarray}
\end{lemma}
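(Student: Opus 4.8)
The plan is to establish both identities by differentiating $V_{\omega}(t)=\int\omega(x)|u(t)|^2\,dx$ directly in time, working first under the standing assumption that $u$ is sufficiently smooth and decaying so that all differentiations under the integral sign and integrations by parts are justified. For the first identity I would use $\partial_t|u|^2=2\,\mathrm{Re}(\bar u\,u_t)$ and solve the equation for the time derivative, $u_t=i(\Delta u-Vu-W|u|^{\sigma}u)$, so that $\bar u\,u_t=i\bar u\Delta u-iV|u|^2-iW|u|^{\sigma+2}$. Since $V$ and $W$ are real-valued, the last two terms are purely imaginary and drop out upon taking real parts, leaving $\partial_t|u|^2=2\,\mathrm{Re}(i\bar u\Delta u)=-2\,\mathrm{Im}(\bar u\Delta u)$. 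Writing $\mathrm{Im}(\bar u\Delta u)=\nabla\cdot\mathrm{Im}(\bar u\nabla u)$ (the term $|\nabla u|^2$ being real) and integrating $\omega$ against this divergence by parts once yields \eqref{GrindEQ__6_1_}.

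For the second identity I would differentiate \eqref{GrindEQ__6_1_} in $t$, move $\partial_t$ inside the integral, and again substitute $u_t,\bar u_t$ from the equation. This splits $\frac{d^2}{dt^2}V_{\omega}$ into a kinetic contribution from the $\Delta u$ terms, a linear-potential contribution from the $Vu$ terms, and a nonlinear contribution from the $W|u|^{\sigma}u$ terms. The kinetic part is the core of the computation: after integrating by parts to move derivatives off $u_t$ and symmetrizing in the indices $j,k$, it produces the Hessian quadratic form $4\sum_{j,k}\int\partial_{jk}^2\omega\,\mathrm{Re}(\partial_k u\,\partial_j\bar u)\,dx$ together with the bilaplacian term $-\int\Delta^2\omega\,|u|^2\,dx$. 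The $V$-terms combine, after one integration by parts, into $-2\int\nabla\omega\cdot\nabla V\,|u|^2\,dx$. For the nonlinear terms I would use the pointwise identity $\mathrm{Re}(|u|^{\sigma}\bar u\,\partial_j u)=\frac{1}{\sigma+2}\partial_j|u|^{\sigma+2}$; integrating by parts then distributes the derivative between $\Delta\omega$ and $\nabla W$, producing exactly $\frac{2\sigma}{\sigma+2}\int\Delta\omega\,W|u|^{\sigma+2}\,dx$ and $-\frac{4}{\sigma+2}\int\nabla\omega\cdot\nabla W\,|u|^{\sigma+2}\,dx$. Collecting the three groups gives \eqref{GrindEQ__6_2_} (with the weight $a=\omega$).

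I expect the main obstacle to be the bookkeeping in the kinetic term of the second derivative: one must track several integrations by parts and symmetrize carefully to land on the Hessian form with the correct factor $4$ and the $-\Delta^2\omega$ term, since a naive treatment easily misplaces a sign or collapses $\sum_{j,k}\partial_{jk}^2\omega\,\mathrm{Re}(\partial_k u\,\partial_j\bar u)$ into the coarser $\int\Delta\omega\,|\nabla u|^2$. The nonlinear piece also hinges on the algebraic identity above to split the coefficients into $\tfrac{2\sigma}{\sigma+2}$ and $\tfrac{4}{\sigma+2}$. Finally, since the identities are first obtained for smooth decaying solutions, I would note that in the subsequent applications they are used with localized weights $\omega$ and justified for genuine $H^1$-solutions by the standard regularization and approximation argument.
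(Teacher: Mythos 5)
Your derivation is correct and is exactly the standard virial computation that the paper relies on: the paper itself gives no proof of this lemma, citing \cite{D21A}, where the identity is obtained by precisely the route you describe (differentiate under the integral, substitute $u_t$ from the equation, integrate by parts, and use $\mathrm{Re}(|u|^{\sigma}\bar u\,\partial_j u)=\tfrac{1}{\sigma+2}\partial_j|u|^{\sigma+2}$ to split the nonlinear coefficients). Your closing remark that $a(x)$ in \eqref{GrindEQ__6_2_} should read $\omega$ correctly identifies a typo in the statement, and your caveat about justifying the identity for $H^1$-solutions by regularization is the right one.
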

Let us introduce a function $\varphi:[0,\infty)\to [0,\infty)$ satisfying
\begin{equation}\label{GrindEQ__6_3_}
\varphi (r)=\left\{\begin{array}{l} {r^2,~\textrm{if}\;0\le r\le 1,}
\\ {\textrm{0},~\textrm{if}\;r\ge 2,} \end{array}\right.
\textrm{and}~~\varphi''(r)\le 2 ~~\textrm{for}~~r\ge 0.
\end{equation}
Given $R>0$, we define the radial function $\varphi_{R}:\mathbb R^{d}\to [0,\infty)$:
\begin{equation}\label{GrindEQ__6_4_}
\varphi_{R}(x)=\varphi_{R}(r):=R^{2}\varphi(r/R),\;r=|x|.
\end{equation}
One can easily see that
\begin{equation}\label{GrindEQ__6_5_}
2-\varphi''_{R}(r)\ge0,\;2-\frac{\varphi'_{R}(r)}{r}\ge0,\;2d-\Delta\varphi_{R}(x)\ge0,~\forall r\ge 0,~x\in \mathbb R^d.
\end{equation}

We have the following localized virial estimate.
\begin{lemma}\label{lem 6.3.}
Let $d \in \mathbb N$, $0<a,b<2$, $c\in \mathbb R$, $0<\sigma<\infty$, $\sigma\le \sigma_{\rm c}(1,b)$, $R>1$ and $\varphi_{R}$ be as in \eqref{GrindEQ__6_4_}. Let $u:I\times\mathbb R^{d}\to \mathbb C$ be a solution to the focusing Cauchy problem \eqref{GrindEQ__1_1_} with $\lambda=-1$. Then for any $t\in I$, we have
\begin{eqnarray}\begin{split} \label{GrindEQ__6_6_}
\frac{d^2}{dt^2}V_{\varphi_{R}}(t)&\le 8 \left\|\nabla u(t)\right\|_{L^2}^{2}+4ac\left\||x|^{-a} |u(t)|^2\right\|_{L^1}-\frac{4(d\sigma+2b)}{\sigma+2}\int{|x|^{-b}|u(t)|^{\sigma+2}dx}\\
&~~~~~+CR^{-2}+CR^{-a}+CR^{-b}\left\|\nabla u(t)\right\|_{L^2}^{\frac{\sigma d}{2}}\\
&=G(u(t))-2c(d\sigma+2b-2a)\left\||x|^{-a}|u(t)|^2\right\|_{L^1}+CR^{-2}+CR^{-a}+CR^{-b}\left\|\nabla u(t)\right\|_{L^2}^{\frac{\sigma d}{2}},
\end{split}\end{eqnarray}
where
\begin{eqnarray}\begin{split}\label{GrindEQ__6_7_}
G(u):&=8\left\|\nabla u\right\|_{L^2}^2+2c(d\sigma+2b)\left\||x|^{-a}|u|^2\right\|_{L^1}-\frac{4(d\sigma+2b)}{\sigma+2}\int{|x|^{-b}|u|^{\sigma+2}dx}\\
&=4(d\sigma+2b)E_{b,c}(u)-2(d\sigma+2b-4)\left\|\nabla u\right\|_{L^2}^2.
\end{split}\end{eqnarray}

\end{lemma}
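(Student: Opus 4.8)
The plan is to apply Lemma \ref{lem 6.2.} directly with the weight $\omega=\varphi_{R}$ and with $V=c|x|^{-a}$, $W=-|x|^{-b}$, which is exactly the form of the focusing equation \eqref{GrindEQ__1_1_} with $\lambda=-1$. Since $\varphi_{R}$ is compactly supported with all derivatives bounded, the virial potential $V_{\varphi_{R}}(t)$ is finite along the $H^{1}$-flow and the identity \eqref{GrindEQ__6_2_} is justified for $H^{1}$-solutions by the usual regularization/approximation argument, so I would invoke it without further comment. This produces five terms, and the strategy is to compare each against the exact quadratic weight $|x|^{2}$ (for which $\varphi_{R}(x)=|x|^{2}$ on $\{|x|\le R\}$) and to absorb all localization contributions, supported in $\{|x|\ge R\}$, into the error terms $CR^{-2}$, $CR^{-a}$ and $CR^{-b}\|\nabla u\|_{L^{2}}^{\sigma d/2}$.

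For the Hessian term I would use that $\varphi_{R}$ is radial, so its Hessian equals $\varphi_{R}''(r)\,\widehat{x}\otimes\widehat{x}+\tfrac{\varphi_{R}'(r)}{r}(I-\widehat{x}\otimes\widehat{x})$, whose eigenvalues are $\varphi_{R}''(r)$ and $\tfrac{\varphi_{R}'(r)}{r}$. By \eqref{GrindEQ__6_5_} both are bounded above by $2$, hence the Hessian is $\le 2I$ as a quadratic form and $4\sum_{j,k}\partial^{2}_{jk}\varphi_{R}\,\mathrm{Re}(\partial_{k}u\,\partial_{j}\bar{u})\le 8|\nabla u|^{2}$ pointwise; integrating gives the leading $8\|\nabla u\|_{L^{2}}^{2}$ over all of $\mathbb{R}^{d}$. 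The biharmonic term is the easiest: $\Delta^{2}\varphi_{R}$ is supported on $\{R\le|x|\le 2R\}$ and satisfies $|\Delta^{2}\varphi_{R}|\lesssim R^{-2}$, so by the conservation of mass $-\int\Delta^{2}\varphi_{R}|u|^{2}\lesssim R^{-2}\|u\|_{L^{2}}^{2}\lesssim R^{-2}$.

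The potential and nonlinear terms carry the main content. Using $\nabla\varphi_{R}=\varphi_{R}'(r)\widehat{x}$ together with $\nabla(|x|^{-a})=-a|x|^{-a-2}x$ and $\nabla(|x|^{-b})=-b|x|^{-b-2}x$, a direct computation shows that on $\{|x|\le R\}$ (where $\varphi_{R}'(r)=2r$ and $\Delta\varphi_{R}=2d$) the potential term equals $4ac\int_{|x|\le R}|x|^{-a}|u|^{2}$, while the two nonlinear terms combine, since $\tfrac{2\sigma}{\sigma+2}\cdot 2d+\tfrac{4}{\sigma+2}\cdot 2b=\tfrac{4(d\sigma+2b)}{\sigma+2}$, into $-\tfrac{4(d\sigma+2b)}{\sigma+2}\int_{|x|\le R}|x|^{-b}|u|^{\sigma+2}$. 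I would then extend each integral to all of $\mathbb{R}^{d}$ at the cost of tails over $\{|x|>R\}$: since $|x|^{-a}\le R^{-a}$ there, the potential tail is $\lesssim R^{-a}$, and since $|x|^{-b}\le R^{-b}$ the nonlinear tail is $\lesssim R^{-b}\|u\|_{L^{\sigma+2}}^{\sigma+2}$. The remaining annulus contributions are handled the same way, using $|\Delta\varphi_{R}|\lesssim 1$ and $|\nabla\varphi_{R}\cdot\nabla W|\lesssim R^{-b}$ on $\{R\le|x|\le 2R\}$. The one place where a genuine estimate is needed, and the step I expect to be the main obstacle, is converting $\|u\|_{L^{\sigma+2}}^{\sigma+2}$ into $\|\nabla u\|_{L^{2}}^{\sigma d/2}$: here I would apply the Gagliardo--Nirenberg inequality (Lemma \ref{lem 6.1.}) to get $\|u\|_{L^{\sigma+2}}^{\sigma+2}\lesssim\|\nabla u\|_{L^{2}}^{\sigma d/2}\|u\|_{L^{2}}^{\sigma+2-\sigma d/2}$ and absorb the $L^{2}$-factor by mass conservation; this is legitimate precisely because $\sigma\le\sigma_{\rm c}(1,b)<\tfrac{4}{d-2}$ keeps the Sobolev exponent subcritical, which is where the hypothesis $\sigma\le\sigma_{\rm c}(1,b)$ enters.

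Collecting the main terms and all $O(R^{-2})+O(R^{-a})+O(R^{-b}\|\nabla u\|_{L^{2}}^{\sigma d/2})$ errors yields the first inequality in \eqref{GrindEQ__6_6_}. The remaining identity is pure algebra: writing $4ac=2c(d\sigma+2b)-2c(d\sigma+2b-2a)$ splits off the term $-2c(d\sigma+2b-2a)\||x|^{-a}|u|^{2}\|_{L^{1}}$ and identifies the rest as $G(u)$, while substituting the definition \eqref{GrindEQ__1_20_} of $E_{b,c}$ (with $\lambda=-1$) verifies $G(u)=4(d\sigma+2b)E_{b,c}(u)-2(d\sigma+2b-4)\|\nabla u\|_{L^{2}}^{2}$.
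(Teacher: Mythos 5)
Your proposal is correct and follows essentially the same route as the paper: both apply the general virial identity of Lemma \ref{lem 6.2.} with $V=c|x|^{-a}$, $W=-|x|^{-b}$, exploit the radial structure and the sign conditions \eqref{GrindEQ__6_5_} to extract the main terms $8\|\nabla u\|_{L^2}^2+4ac\||x|^{-a}|u|^2\|_{L^1}-\tfrac{4(d\sigma+2b)}{\sigma+2}\int|x|^{-b}|u|^{\sigma+2}$, bound the localization errors supported in $\{|x|\ge R\}$ by $CR^{-2}+CR^{-a}+CR^{-b}\|u\|_{L^{\sigma+2}}^{\sigma+2}$, and invoke Gagliardo--Nirenberg (Lemma \ref{lem 6.1.}) together with mass conservation and $\sigma\le\sigma_{\rm c}(1,b)$ to close the nonlinear tail. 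The only cosmetic difference is that you bound the Hessian term via the pointwise eigenvalue estimate $\nabla^2\varphi_R\le 2I$, whereas the paper rearranges it using $|x\cdot\nabla u|\le r|\nabla u|$; these are the same estimate.
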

\begin{proof}
Applying Lemma \ref{lem 6.2.} with $V=c|x|^{-a}$ and $W=-|x|^{-b}$, we have
\begin{eqnarray}\begin{split}\label{GrindEQ__6_8_}
\frac{d^{2}}{dt^{2}}V_{\varphi_{R}}(t)&=-\int{\Delta^{2}\varphi_{R}(x)|u(t, x)|^{2}dx}-2c\int{\nabla \varphi_{R}(x)\cdot\nabla(|x|^{-a})|u(t, x)|^{2}dx}\\
&~~+4\sum_{j,k=1}^{d} {\int{\partial_{jk}^{2}\varphi_{R}(x)\textnormal{Re}(\partial_{k}u(t, x)\partial_{j}\bar{u}(t, x))dx}}\\
&~~-\frac{2\sigma}{\sigma+2}
\int{\Delta \varphi_{R}(x)|x|^{-b}|u(t, x)|^{\sigma+2}dx}\\
&~~+\frac{4}{\sigma+2}\int{\nabla \varphi_{R}(x)\cdot \nabla(|x|^{-b})|u(t, x)|^{\sigma+2}dx}.
\end{split}\end{eqnarray}
Noticing that
$$\partial_{j}=\frac{x_{j}}{r}\partial_{r},~\partial^{2}_{jk}=\left(\frac{\delta_{jk}}{r}-\frac{x_{j}x_{k}}{r^{3}}\right)\partial_{r}
+\frac{x_{j}x_{k}}{r^{2}}\partial^{2}_{r},$$
we have
\begin{eqnarray}\begin{split}\label{GrindEQ__6_9_}
&\sum_{j,k=1}^{d}
{\int{\partial_{jk}^{2}\varphi_{R}(x)\textnormal{Re}(\partial_{k}u(t, x)\partial_{j}\bar{u}(t, x))dx}} \\&~~~~~~=\int{\frac{\varphi'_{R}(r)}{r}\left|\nabla u(t, x)\right|^{2}dx}+\int{\left(\frac{\varphi''_{R}(r)}{r^{2}}-\frac{\varphi'_{R}(r)}{r^{3}}\right)
\left|x\cdot\nabla u(t, x)\right|^{2}dx.}
\end{split}\end{eqnarray}
and
\begin{equation}\label{GrindEQ__6_10_}
\nabla \varphi_{R}(x)\cdot \nabla(|x|^{-a})=-a\frac{\varphi_{R}'}{r}|x|^{-a},~\nabla \varphi_{R}(x)\cdot \nabla(|x|^{-b})=-b\frac{\varphi_{R}'}{r}|x|^{-b}.
\end{equation}
In view of \eqref{GrindEQ__6_8_}--\eqref{GrindEQ__6_10_}, we have
\begin{eqnarray}\begin{split}\label{GrindEQ__6_11_}
\frac{d^{2}}{dt^{2}}V_{\varphi_{R}}(t)&=-\int{\Delta^{2}\varphi_{R}(x)|u(t, x)|^{2}dx}+4\int{\frac{\varphi'_{R}(r)}{r}\left|\nabla u(t, x)\right|^{2}dx}\\
&+4\int{\left(\frac{\varphi''_{R}(r)}{r^{2}}-\frac{\varphi'_{R}(r)}{r^{3}}\right)\left|x\cdot\nabla u(t, x)\right|^{2}dx}
\\
&-2ac\int{\left(2-\frac{\varphi_{R}'}{r}\right)|x|^{-a}|u(t, x)|^{2}dx}+4ac\left\||x|^{-a} |u(t)|^2\right\|_{L^1}
\\
&-\frac{4b}{\sigma+2}\int{|x|^{-b}
\frac{\varphi'(r)}{r}|u(t, x)|^{\sigma+2}dx}\\
&-\frac{2\sigma}{\sigma+2}\int{\Delta \varphi_{R}(x)|x|^{-b}|u(t, x)|^{\sigma+2}dx}.
\end{split}\end{eqnarray}
Since $\left\|\Delta^{2}\varphi_{R}\right\|_{L^{\infty}}\lesssim R^{-2}$, the conservation of mass implies that
\begin{equation}\label{GrindEQ__6_12_}
\left|\int{\Delta^{2}\varphi_{R}(x)|u(t, x)|^{2}dx}\right|\lesssim R^{-2}\left\|u(t)\right\|_{L^2}^{2}\lesssim R^{-2}.
\end{equation}
Using the conservation of mass and the facts that
$
\left|x\cdot \nabla u\right|\le r|\nabla u|,~\varphi''_{R}(r)\le 2,~\frac{\varphi'_{R}(r)}{r}\le 2,
$
we get
\begin{eqnarray}\begin{split}\label{GrindEQ__6_13_}
&4\int{\frac{\varphi'_{R}(r)}{r}\left|\nabla u(t)\right|^{2}dx}
+4\int{\left(\frac{\varphi''_{R}(r)}{r^{2}}-\frac{\varphi'_{R}(r)}{r^{3}}\right)\left|x\cdot\nabla u(t)\right|^{2}dx}
-8\left\|\nabla u(t)\right\|_{L^2}\\
&~~~~\le 4 \int{\left(2-\frac{\varphi'_{R}(r)}{r}\right)\left(-\left|\nabla u(t)\right|^{2}+\frac{\left|x\cdot\nabla u(t)\right|^{2}}{r^2}\right)dx}\le 0.
\end{split}\end{eqnarray}
Using \eqref{GrindEQ__6_5_} and the conservation of mass, we also have
\begin{equation}\label{GrindEQ__6_14_}
-2ac\int{\left(2-\frac{\varphi_{R}'}{r}\right)|x|^{-a}|u(t, x)|^{2}dx}\lesssim
\max\left\{-2acS,0\right\} \int_{|x|>R}{R^{-a}|u(t)|^{2}dx}
\lesssim R^{-a},
\end{equation}
where $S=\max_{r\ge 1}\{2-\frac{\theta'(r)}{r}\}$.
Furthermore, it follows from Lemma \ref{lem 6.1.} and the conservation of mass that
\begin{eqnarray}\begin{split}\label{GrindEQ__6_15_}
\frac{4(d\sigma+2b)}{\sigma+2}&\int{|x|^{-b}|u(t, x)|^{\sigma+2}dx}-\frac{2\sigma}{\sigma+2}\int{\Delta \varphi_{R}(x)|x|^{-b}|u(t, x)|^{\sigma+2}dx}\\
&-\frac{4b}{\sigma+2}\int{|x|^{-b}\frac{\varphi'(r)}{r}|u(t, x)|^{\sigma+2}dx}\\
&=\frac{2\sigma}{\sigma+2}\int{|x|^{-b}\left(2d-\Delta \varphi_{R}(x)\right)|u(t)|^{\sigma+2}dx}\\
&+\frac{4b}{\sigma+2}\int{|x|^{-b}\left(2-\frac{\varphi'_{R}(r)}{r}\right)|u(t)|^{\sigma+2}dx}\\
&\lesssim\int_{|x|\ge R}{|x|^{-b}|u(t)|^{\sigma+2}}\le R^{-b}\left\|u(t)\right\|_{L^{\sigma+2}}^{\sigma+2}\\
&\lesssim R^{-b}\left\|\nabla u(t)\right\|_{L^2}^{\frac{\sigma d}{2}}\left\| u(t)\right\|_{L^2}^{\sigma+2-\frac{\sigma d}{2}}\lesssim R^{-b}\left\|\nabla u(t)\right\|_{L^2}^{\frac{\sigma d}{2}},
\end{split}\end{eqnarray}
where we use the fact $\sigma\le \sigma_{\rm c}(1,b)$.
The desired result follows from \eqref{GrindEQ__6_11_}--\eqref{GrindEQ__6_15_}.
\end{proof}
In the mass-critical case $\sigma=\frac{4-2b}{d}$, we have the following refined version of Lemma \ref{lem 6.2.}. As in \cite{CF22}, we use the following function
\begin{equation} \label{GrindEQ__6_16_}
v(r)=\left\{\begin{array}{ll}
{2r,}~&{\textrm{if}~0\le r\le 1,}\\
{2r-2(r-1)^k,}~&{\textrm{if}~1< r\le 1+\left(\frac{1}{k}\right)^{\frac{1}{k-1}},}\\
{\textrm{smooth and } v'=0,}~&{\textrm{if}~1+\left(\frac{1}{k}\right)^{\frac{1}{k-1}}< r<2,}\\
{0,}~&{\textrm{if}~r\ge 2,}
\end{array}\right.
\end{equation}
where $k\in \mathbb N$ is chosen as in \cite{CF22}.
We then define the radial function
$$
\phi(r)=\int_{0}^{r}v(s)ds.
$$
\begin{equation}\label{GrindEQ__6_17_}
\phi_{R}(x)=\phi_{R}(r):=R^{2}\phi(r/R),\;r=|x|.
\end{equation}
\begin{lemma}\label{lem 6.4.}
Let $d \in \mathbb N$, $0<a\le 2$, $0<b<2$, $c\in \mathbb R$, $\sigma=\frac{4-2b}{d}$ and $\phi_{R}$ be as in \eqref{GrindEQ__6_17_}. Let $u:I\times\mathbb R^{d}\to \mathbb C$ be a solution to the focusing Cauchy problem \eqref{GrindEQ__1_1_} with $\lambda=-1$ and $E_{b,c}(u_0)<0$. Then for any $t\in I$ and $R>1$ large enough, we have
\begin{eqnarray}\begin{split} \label{GrindEQ__6_18_}
\frac{d^2}{dt^2}V_{\phi_{R}}(t)&\le 8 E_{b,c}(u_0)-4c(2-a)\left\||x|^{-a}|u(t)|^2\right\|_{L^1}.
\end{split}\end{eqnarray}
\end{lemma}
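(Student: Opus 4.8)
The plan is to rerun the virial computation underlying Lemma \ref{lem 6.3.}, but with the sharper cutoff $\phi_R$ of \eqref{GrindEQ__6_17_}, and then to exploit both the mass-critical relation $d\sigma+2b=4$ and the strict sign $E_{b,c}(u_0)<0$ to upgrade the ``$16E_{b,c}$'' produced by the exact identity to the claimed ``$8E_{b,c}$''. First I would apply Lemma \ref{lem 6.2.} with $V=c|x|^{-a}$, $W=-|x|^{-b}$ and $\omega=\phi_R$, and pass to radial coordinates exactly as in \eqref{GrindEQ__6_9_}--\eqref{GrindEQ__6_11_}, writing $\nabla_{\perp}$ for the angular part of the gradient. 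Since $\phi_R$ is radial and coincides with $|x|^2$ on $\{|x|\le R\}$ (so that $\phi_R''=2$, $\phi_R'/r=2$, $\Delta\phi_R=2d$ and $\Delta^2\phi_R=0$ there), this yields
\begin{eqnarray}\begin{split}\nonumber
\frac{d^{2}}{dt^{2}}V_{\phi_{R}}(t)&=-\int{\Delta^{2}\phi_{R}|u(t)|^{2}dx}+4\int{\phi_R''|\partial_r u(t)|^2dx}+4\int{\frac{\phi_R'}{r}|\nabla_{\perp} u(t)|^2dx}\\
&~~~+2ac\int{\frac{\phi_R'}{r}|x|^{-a}|u(t)|^{2}dx}-\frac{2\sigma}{\sigma+2}\int{\Delta \phi_{R}|x|^{-b}|u(t)|^{\sigma+2}dx}\\
&~~~-\frac{4b}{\sigma+2}\int{\frac{\phi_R'}{r}|x|^{-b}|u(t)|^{\sigma+2}dx}.
\end{split}\end{eqnarray}

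Next I would add and subtract the ``flat'' weights $\phi_R''\equiv\phi_R'/r\equiv 2$ and $\Delta\phi_R\equiv 2d$. The full-space part then reassembles, using the conservation of energy, $d\sigma+2b=4$ and the algebra already recorded in \eqref{GrindEQ__6_7_}, into $G(u(t))-4c(2-a)\||x|^{-a}|u(t)|^2\|_{L^1}=16E_{b,c}(u_0)-4c(2-a)\||x|^{-a}|u(t)|^2\|_{L^1}$, while every remaining contribution is supported in the annulus $\{R\le|x|\le 2R\}$. By the sign properties $2-\phi_R''\ge0$, $2-\phi_R'/r\ge0$, $2d-\Delta\phi_R\ge0$ in the spirit of \eqref{GrindEQ__6_5_}, together with $\|\Delta^2\phi_R\|_{L^\infty}\lesssim R^{-2}$ and mass conservation: the $\Delta^2\phi_R$ term is $O(R^{-2})$; the potential remainder $2ac\int(\phi_R'/r-2)|x|^{-a}|u(t)|^2$ is $O(R^{-a})$ as in \eqref{GrindEQ__6_14_}; the kinetic remainder $-4\int(2-\phi_R'')|\partial_r u(t)|^2-4\int(2-\phi_R'/r)|\nabla_{\perp}u(t)|^2$ is non-positive; and the only dangerous piece is the non-negative nonlinear remainder
\[
\frac{2\sigma}{\sigma+2}\int{(2d-\Delta\phi_R)|x|^{-b}|u(t)|^{\sigma+2}dx}+\frac{4b}{\sigma+2}\int{\left(2-\frac{\phi_R'}{r}\right)|x|^{-b}|u(t)|^{\sigma+2}dx}.
\]

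The main obstacle, and the sole reason for the particular profile \eqref{GrindEQ__6_16_}, is to control this nonlinear remainder. Here $d\sigma+2b=4$ forces $\sigma d/2=2-b<2$, so Lemma \ref{lem 6.1.} (mass-critical Gagliardo--Nirenberg, as already used in \eqref{GrindEQ__6_15_}) bounds the annular nonlinear integral by $C\|\nabla u(t)\|_{L^2(\{R\le|x|\le 2R\})}^{2-b}\|u(t)\|_{L^2}^{\sigma+b}$; a crude application of Young's inequality would leave an uncontrolled $\varepsilon\|\nabla u(t)\|_{L^2}^2$, which one cannot afford in the mass-critical regime where the leading $\|\nabla u\|_{L^2}^2$ terms cancel. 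Following \cite{CF22}, the profile \eqref{GrindEQ__6_16_} (with $k$ chosen large) is engineered precisely so that the nonlinear weights $2d-\Delta\phi_R$ and $2-\phi_R'/r$ are pointwise controlled by the kinetic weight $2-\phi_R''$; this lets me run the Gagliardo--Nirenberg estimate with only the \emph{annular} gradient and then absorb the resulting $\varepsilon\|\nabla u(t)\|_{L^2(\{R\le|x|\le 2R\})}^2$ into the non-positive kinetic remainder, at the price of an $O(R^{-\delta})$ term for some $\delta>0$. I expect the verification of this pointwise domination and the ensuing absorption to be the delicate part of the argument.

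Collecting all of the above gives $\frac{d^2}{dt^2}V_{\phi_R}(t)\le 16E_{b,c}(u_0)-4c(2-a)\||x|^{-a}|u(t)|^2\|_{L^1}+C(R^{-2}+R^{-a}+R^{-\delta})$. Finally I would invoke $E_{b,c}(u_0)<0$: writing $16E_{b,c}(u_0)=8E_{b,c}(u_0)+8E_{b,c}(u_0)$, the strictly negative slack $8E_{b,c}(u_0)<0$ absorbs the error $C(R^{-2}+R^{-a}+R^{-\delta})$ once $R>1$ is taken large enough, which is exactly the point where the error must be free of any $\|\nabla u(t)\|_{L^2}$ factor. This produces \eqref{GrindEQ__6_18_} and completes the proof.
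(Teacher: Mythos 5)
Your proposal is correct and follows essentially the same route as the paper: both expand the virial identity of Lemma \ref{lem 6.2.} for the cutoff $\phi_R$, reassemble the interior contribution into $16E_{b,c}(u_0)-4c(2-a)\left\||x|^{-a}|u(t)|^2\right\|_{L^1}$ using $d\sigma+2b=4$, bound the bi-Laplacian and potential remainders by $R^{-2}$ and $R^{-a}$, defer the control of the kinetic-plus-nonlinear annular remainders to the estimates of \cite{CF22} for the profile \eqref{GrindEQ__6_16_}, and finally absorb all $t$-uniform errors into the strictly negative slack $8E_{b,c}(u_0)$ for $R$ large. If anything, your write-up is more explicit than the paper's (which simply cites \cite{CF22} for $K_1,K_2,K_3$), since you spell out the pointwise domination of the nonlinear weights by $2-\phi_R''$ and the reason the error must carry no $\left\|\nabla u(t)\right\|_{L^2}$ factor.
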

\begin{proof}
Using the same argument as in the proof of Lemma \ref{lem 6.2.}, we immediately get
\begin{eqnarray}\begin{split}\label{GrindEQ__6_19_}
\frac{d^{2}}{dt^{2}}V_{\phi_{R}}(t)&=16E_{b,c}(u_0)-4c(2-a)\left\||x|^{-a}|u(t)|^2\right\|_{L^1}+K_1+K_2+K_2+K_4,
\end{split}\end{eqnarray}
where
$$
K_1=-4\int{\left(2-\frac{\phi'_{R}(r)}{r}\right)\left|\nabla u(t)\right|^{2}dx}
+4\int{\left(\frac{\phi''_{R}(r)}{r^{2}}-\frac{\phi'_{R}(r)}{r^{3}}\right)\left|x\cdot\nabla u(t)\right|^{2}dx},
$$
$$
K_2=\frac{2}{d+2-b}\int{\left[(2-b)(2-\phi''_{R}(r))+(2d-2+b)\left(2-\frac{\phi'_{R}(r)}{r}\right)\right]|x|^{-b}|u(t)|^{\sigma+2}dx},
$$
$$
K_3=-\int{\Delta^{2}\phi_{R}(x)|u(t, x)|^{2}dx}
$$
and
$$
K_4=-2ac\int{\left(2-\frac{\phi_{R}'}{r}\right)|x|^{-a}|u(t, x)|^{2}dx}.
$$
Using the same argument as in \eqref{GrindEQ__6_14_}, we get
\begin{equation}\label{GrindEQ__6_20_}
K_4 \lesssim R^{-a}.
\end{equation}
Using \eqref{GrindEQ__6_19_}, \eqref{GrindEQ__6_20_} and the estimates of $K_i~(i=1,2,3)$ obtained in \cite{CF22}, we immediately get \eqref{GrindEQ__6_18_} for $R>1$ large enough and we omitted the details.
\end{proof}
\subsection{Proofs of Theorems \ref{thm 1.7.} and \ref{thm 1.9.}}

We are ready to prove Theorems \ref{thm 1.7.} and \ref{thm 1.9.}.
\begin{proof}[{\bf Proof of Item 1 of Theorem \ref{thm 1.7.} and Theorem \ref{thm 1.9.}}]
Using Lemma \ref{lem 6.4.} and the assumptions of Item 1 of Theorem \ref{thm 1.7.} and Theorem \ref{thm 1.9.}, we immediately get
$$
\frac{d^2}{dt^2}V_{\phi_{R}}(t)\le 8 E_{b,c}(u_0)<0,
$$
where $\phi_{R}$ is given in \eqref{GrindEQ__6_17_}.
Integrating this estimate, there exists $t_0>0$ such that $V_{\phi_{R}}(t_0)<0$ which is impossible. This completes the proof.
\end{proof}
\begin{proof}[{\bf Proof of Items 2 and 3 of Theorem \ref{thm 1.7.}.}]
We prove the results in three steps.

\textbf{Step 1.}
In this step, we prove that there exists $c_1>0$ such that
\begin{equation}\label{GrindEQ__6_21_}
G(u(t))\le-c_1,
\end{equation}
for any $t$ in the existence time, where $G(u)$ is given in \eqref{GrindEQ__6_7_}.
In fact, if $E_{b,c}(u_0)<0$, we immediately get \eqref{GrindEQ__6_21_} by using \eqref{GrindEQ__6_7_}, the conservation of energy and fact that $\sigma>\frac{4-2b}{d}$. Hence, it suffices to consider the case $E_{b,c}(u_0)\ge0$.

First, we consider the intercritical case $\frac{4-2b}{d}<\sigma<\sigma_{\rm c}(1,b)$.
Using \eqref{GrindEQ__5_10_}, the second condition in \eqref{GrindEQ__1_23_} and the continuity argument, we have
\begin{equation}\label{GrindEQ__6_22_}
\left\|\nabla u(t)\right\|_{L^2}\left\|u(t)\right\|_{L^2}^{\gamma_{\rm c}}>\left\|\nabla Q\right\|_{L^2}\left\|Q\right\|_{L^2}^{\gamma_{\rm c}},
\end{equation}
for any $t$ in the existence time.
And the first condition in \eqref{GrindEQ__1_23_} shows that there exists $\delta>0$ such that
\begin{equation}\label{GrindEQ__6_23_}
E_{b,c}(u_{0})[M(u_{0})]^{\gamma_{\rm c}}\le (1-\delta)E_{b}(Q)[M(Q)]^{\gamma_{\rm c}}.
\end{equation}
\eqref{GrindEQ__6_22_}, \eqref{GrindEQ__6_23_}, \eqref{GrindEQ__2_6_} and the conservation of mass and energy show that
\begin{eqnarray}\begin{split}\label{GrindEQ__6_24_}
G(u(t))[M(u(t))]^{\gamma_{\rm c}}&=4(d\sigma+2b)E_{b,c}(u(t))[M(u(t))]^{\gamma_{\rm c}}\\
&~~~~~~~~~~-2(d\sigma-4+2b)
\left(\left\|\nabla u(t)\right\|_{L^2}\left\|u(t)\right\|_{L^{2}}^{\gamma_{\rm c}}\right)^2\\
&\le 4(d\sigma+2b)(1-\delta)E_{b}(Q)[M(Q)]^{\gamma_{\rm c}}\\
&~~~~~~~~~~-2(d\sigma-4+2b)
\left(\left\|\nabla Q\right\|_{L^2}\left\|Q\right\|_{L^{2}}^{\gamma_{\rm c}}\right)^2\\
&=-2(d\sigma-4+2b)\delta\left(\left\|\nabla Q\right\|_{L^2}\left\|Q\right\|_{L^{2}}^{\gamma_{\rm c}}\right)^2,
\end{split}\end{eqnarray}
for any $t$ in the existence time.
Hence we have \eqref{GrindEQ__6_21_} with
$$
c_1=\frac{2(d\sigma-4+2b)\delta\left(\left\|\nabla Q\right\|_{L^2}\left\|Q\right\|_{L^{2}}^{\gamma_{\rm c}}\right)^2}{[M(u_0)]^{\gamma_{\rm c}}}.
$$

Next, we consider the energy-critical case $\sigma=\frac{4-2b}{d-2}$ with $d\ge3$.
By definition of the energy and Lemma \ref{lem 2.16.} (sharp Hardy-Sobolev inequality), we have
\begin{eqnarray}\begin{split}\nonumber
E_{b,c}(u(t))&=\frac{1}{2} \left\|\nabla u(t)\right\| _{L^2}+\frac{c}{2}\left\||x|^{-a}|u(t)|^2\right\|_{L^1} -\frac{1}{\sigma+2} \int{|x|^{-b} \left|u(t,x)\right|^{\sigma+2}dx}\\
&\ge \frac{1}{2} \left\|\nabla u(t)\right\|_{L^2} -\frac{[C_{\rm HS}]^{\sigma+2}}{\sigma+2}\left\|\nabla u(t)\right\| _{L^2}^{\sigma+2}=:g\left(\left\|\nabla u(t)\right\| _{L^2}\right),
\end{split}\end{eqnarray}
where
\begin{equation}\label{GrindEQ__6_25_}
g(y)=\frac{1}{2}y^2-\frac{[C_{\rm HS}]^{\sigma+2}}{\sigma+2}y^{\sigma+2}.
\end{equation}
\eqref{GrindEQ__2_14_} shows that
\[g\left(\left\|\nabla W_{b}\right\|_{L^2}\right)=E_b(W_{b}).\]
By the conservation of energy and the first condition in \eqref{GrindEQ__1_24_}, we can see that
\[g\left(\left\|\nabla u(t)\right\| _{L^2}\right)\le E_{b,c}(u(t))=E_{b,c}\left(u_{0}\right)<E_b(W_{b}).\]
By the second condition in \eqref{GrindEQ__1_24_} and the continuity argument, we have
\begin{equation}\label{GrindEQ__6_26_}
\left\|\nabla u(t)\right\|_{L^2}>\left\|\nabla W_{b}\right\|_{L^2},
\end{equation}
for any $t$ in the existence time. By the second condition in \eqref{GrindEQ__1_24_}, we can take $\delta>0$ small enough such that
\begin{equation}\label{GrindEQ__6_27_}
E_{b,c}(u_{0})\le(1-\delta)E_{b}(W_{b}).
\end{equation}
\eqref{GrindEQ__2_13_}, \eqref{GrindEQ__2_14_}, \eqref{GrindEQ__6_7_}, \eqref{GrindEQ__6_26_} and \eqref{GrindEQ__6_27_} show that
\begin{eqnarray}\begin{split} \nonumber
G(u(t))&= 4(d\sigma+2b)E_{b,c}(u(t))-2(d\sigma+2b-4)\left\|\nabla u\right\|_{L^2}^2\\
&\le4(1-\delta)(d\sigma+2b)E_{b}(W_{b})-2(d\sigma+2b-4)\left\|\nabla W_b\right\|_{L^2}^2\\
&=-\frac{ 8\delta(2-b)}{d-2}\left\|\nabla W_{b}\right\|_{L^2}^{2},
\end{split}\end{eqnarray}
which completes the proof of \eqref{GrindEQ__6_21_}.

\textbf{Step 2.}
In this step, we prove the first parts of Items 2 and 3 in Theorem \ref{thm 1.6.}. If $T^{*}<\infty$, then we are done. If $T^{*}=\infty$, then we have to show that there exists $t_{n}\to \infty$ such that $\left\|\nabla u(t_{n})\right\|_{L^2}\to \infty$ as $n\to \infty$. Assume by contradiction that it doesn't hold, i.e.
\begin{equation}\label{GrindEQ__6_28_}
\sup_{t\in [0,\infty)}{\left\|\nabla u(t)\right\|_{L^2}}\le M_{0},
\end{equation}
for some $M_{0}>0$.
Using Lemma \ref{lem 6.3.}, \eqref{GrindEQ__6_21_}, \eqref{GrindEQ__6_28_} and the fact $d\sigma+2b>4>2a$, we can take $R>1$ large enough such that
\begin{eqnarray}\begin{split} \label{GrindEQ__6_29_}
\frac{d^2}{dt^2}V_{\varphi_{R}}(t)&\le -c_1-2c(d\sigma+2b-2a)\left\||x|^{-a}|u(t)|^2\right\|_{L^1}+CR^{-2}+CR^{-a}+CR^{-b}M_0^{\frac{\sigma d}{2}}\\
&=-\frac{c_1}{2},
\end{split}\end{eqnarray}
for any $t$ in the existence time, where $\varphi_{R}$ is given in \eqref{GrindEQ__6_4_}. This shows that there exists $t_1>0$ such that $V_{\varphi_{R}}(t_1)<0$ which is impossible. This completes the proof of the first parts in Items 2 and 3 of Theorem \ref{thm 1.6.}.

\textbf{Step 3.}
In this step, we prove the second parts of Items 2 and 3 in Theorem \ref{thm 1.6.}.
By assumption, we have $\sigma<\frac{4}{d}$ in both of intercritical and energy-critical case.

We first claim that there exists $c_2>0$ such that
\begin{equation}\label{GrindEQ__6_30_}
\left\|\nabla u(t)\right\|_{L^2}\ge c_2,
\end{equation}
for all $t\in[0,T_{\max})$. In fact, \eqref{GrindEQ__6_7_}, \eqref{GrindEQ__6_21_} and the fact $c\ge 0$ imply that
\begin{equation}\label{GrindEQ__6_31_}
\left\|\nabla u\right\|_{L^2}^2\lesssim \int{|x|^{-b}|u|^{\sigma+2}dx}.
\end{equation}
On the other hand, the conservation of mass, Lemmas \ref{lem 2.14.} and \ref{lem 2.16.} show that
\begin{equation}\label{GrindEQ__6_32_}
\int{|x|^{-b}|u|^{\sigma+2}dx}\lesssim \left\|\nabla u\right\|_{L^2}^{\frac{d\sigma+2b}{2}}.
\end{equation}
\eqref{GrindEQ__6_30_} follows directly from \eqref{GrindEQ__6_31_}, \eqref{GrindEQ__6_32_} and the fact $\sigma>\frac{4-2b}{d}$.

We next claim that there exists $c_3>0$ such that
\begin{equation}\label{GrindEQ__6_33_}
\frac{d^2}{dt^2}V_{\varphi_{R}}(t)\le -c_3\left\|\nabla u(t)\right\|_{L^2}^2,
\end{equation}
for all $t\in[0,T_{\max})$ and $R>1$ large enough, where $\varphi_R$ is as in \eqref{GrindEQ__6_4_}. To prove \eqref{GrindEQ__6_33_}, we denote
\begin{equation}\label{GrindEQ__6_34_}
\gamma:=\frac{4(d\sigma+2b)|E_{b,c}(u_0)|+4}{d\sigma+2b-4}.
\end{equation}

\emph{Case 1.} We consider the case
\begin{equation}\label{GrindEQ__6_35_}
\left\|\nabla u(t)\right\|_{L^2}^2\le \gamma.
\end{equation}
Lemma \ref{lem 6.3.}, \eqref{GrindEQ__6_21_}, \eqref{GrindEQ__6_35_} and the fact that $c\ge 0$, we immediately get
$$
\frac{d^2}{dt^2}V_{\varphi_{R}}(t)\le -c_1+CR^{-2}+CR^{-a}+CR^{-b}\gamma^{\frac{\sigma d}{2}},
$$
By choosing $R>1$ large enough, it follows from \eqref{GrindEQ__6_35_} that
\begin{equation}\label{GrindEQ__6_36_}
\frac{d^2}{dt^2}V_{\varphi_{R}}(t)\le -\frac{c_1}{2}\le -\frac{c_1}{2\gamma}\left\|\nabla u(t)\right\|_{L^2}^2.
\end{equation}

\emph{Case 2.} We consider the case
\begin{equation}\label{GrindEQ__6_37_}
\left\|\nabla u(t)\right\|_{L^2}^2\ge \gamma.
\end{equation}
In this case, we have
\begin{eqnarray}\begin{split} \label{GrindEQ__6_38_}
G(u(t))&= 4(d\sigma+2b)E_{b,c}(u(t))-2(d\sigma+2b-4)\left\|\nabla u\right\|_{L^2}^2\\
&\le 4(d\sigma+2b)E_{b,c}(u(t))-(d\sigma+2b-4)\gamma-(d\sigma+2b-4)\left\|\nabla u\right\|_{L^2}^2\\
&\le -4-(d\sigma+2b-4)\left\|\nabla u\right\|_{L^2}^2
\end{split}\end{eqnarray}
Since $c\ge 0$ and $\sigma<\frac{4}{d}$, it follows from Lemma \ref{lem 6.3.}, \eqref{GrindEQ__6_30_} and \eqref{GrindEQ__6_38_} that
\begin{eqnarray}\begin{split} \label{GrindEQ__6_39_}
\frac{d^2}{dt^2}V_{\varphi_{R}}(t)&\le -4-(d\sigma+2b-4)\left\|\nabla u\right\|_{L^2}^2+CR^{-2}+CR^{-a}+CR^{-b}\left\|\nabla u(t)\right\|_{L^2}^{\frac{\sigma d}{2}}\\
&\le-4-(d\sigma+2b-4-CR^{-b}c_2^{\frac{\sigma d}{2}-2})\left\|\nabla u\right\|_{L^2}^2+CR^{-2}+CR^{-a},
\end{split}\end{eqnarray}
Due to the fact that $\sigma>\frac{4-2b}{d}$, we can choose $R>1$ large enough such that
\begin{equation}\label{GrindEQ__6_40_}
CR^{-b}c_2^{\frac{\sigma d}{2}-2}\le \frac{d\sigma+2b-4}{2}~\textrm{and} ~CR^{-2}+CR^{-a}\le 4.
\end{equation}
\eqref{GrindEQ__6_39_} and \eqref{GrindEQ__6_40_} imply that
\begin{equation} \label{GrindEQ__6_41_}
\frac{d^2}{dt^2}V_{\varphi_{R}}(t)\le -\frac{d\sigma+2b-4}{2}\left\|\nabla u\right\|_{L^2}^2,
\end{equation}
for $R>1$ large enough.
In both of Case 1 and Case 2, the choice of $R>1$ are independent of $t$. Hence, we get \eqref{GrindEQ__6_33_} with
$c_3:=\min\{\frac{c_1}{2\gamma},\frac{d\sigma+2b-4}{2}\}>0$. This complete the proof of \eqref{GrindEQ__6_33_}.

By time integration, \eqref{GrindEQ__6_30_} and \eqref{GrindEQ__6_33_} imply that
\begin{equation} \label{GrindEQ__6_42_}
V_{\varphi_{R}}'(t)\lesssim -t<0,~\forall t>T,
\end{equation}
for $T>0$ sufficiently large.
By time integration again, \eqref{GrindEQ__6_33_} and \eqref{GrindEQ__6_42_} show that
\begin{equation} \label{GrindEQ__6_43_}
V_{\varphi_{R}}'(t)\lesssim -\int_{T}^{t}{\left\|\nabla u(s)\right\|_{L^2}^2ds},~\forall t>T.
\end{equation}
On the other hand, \eqref{GrindEQ__6_1_} and H\"{o}lder's inequality imply that
\begin{equation} \label{GrindEQ__6_44_}
|V_{\varphi_{R}}'(t)|=\left|2{\rm Im}\int{\varphi_{R}' \frac{x\cdot \nabla u}{r}\bar{u}dx}\right|\lesssim R \left\|\nabla u\right\|_{L^2}\left\| u\right\|_{L^2}\lesssim \left\|\nabla u\right\|_{L^2}.
\end{equation}
\eqref{GrindEQ__6_43_} and \eqref{GrindEQ__6_44_} show that
$$
\int_{T}^{t}{\left\|\nabla u(s)\right\|_{L^2}^2ds}\lesssim |V_{\varphi_{R}}'(t)| \lesssim \left\|\nabla u\right\|_{L^2}.
$$
Hence, defining $h(t):=\int_{T}^{t}{\left\|\nabla u(s)\right\|_{L^2}^2ds}$, we have $h^2(t)\lesssim h'(t)$. And this ODI has no global solution. Indeed, taking $T'>T$ and integrating on $[T', t)$, we get
$$
t-T'\lesssim \int_{T'}^{t}{\frac{h'(s)}{h^2(s)}ds}=\frac{1}{h(T')}-\frac{1}{h(t)}\le \frac{1}{h(T')}.
$$
This implies that $t<cT'+\frac{c}{h(T')}$.
This completes the proof.
\end{proof}


\begin{thebibliography}{56}
\bibitem{AT21} \small{L. Aloui and S. Tayachi, Local well-posedness for the inhomogeneous nonlinear Schr\"{o}dinger equation, \emph{Discrete Contin. Dyn. Syst.}, \textbf{41} (2021) 5409--5437.}
\bibitem{AK211} \small{J. An and J. Kim, Local well-posedness for the inhomogeneous nonlinear Schr\"{o}dinger equation in $H^{s}(\mathbb R^{n} )$, \emph{Nonlinear Anal. Real World Appl.}, \textbf{59} (2021) 103268.}
\bibitem{AK212} \small{J. An and J. Kim, Small data global well-posedness and scattering for the inhomogeneous nonlinear Schr\"{o}dinger equation in $H^{s}(\mathbb R^{n} )$, \emph{Z. Anal. Anwend.}, \textbf{40} (2021) 453--475.}
\bibitem{AK23} \small{J. An and J. Kim, The Cauchy problem for the critical inhomogeneous nonlinear Schr\"{o}dinger equation in $H^{s}(\mathbb R^{n} )$, \emph{Evol. Equ. Control Theory}, \textbf{12} (2023) 1039--1055.}
\bibitem{AK232} \small{J. An and J. Kim, A note on the $H^{s}$-critical inhomogeneous nonlinear Schr\"{o}dinger equation, \emph{Z. Anal. Anwend.}, \textbf{42} (2023) 403--433.}
\bibitem{AKC22} \small{J. An, J. Kim and K. Chae, Continuous dependence of the Cauchy problem for the inhomogeneous nonlinear Schr\"{o}dinger equation in $H^{s}(\mathbb R^{n} )$, \emph{Discrete Contin. Dyn. Syst. Ser. B}, \textbf{27} (2022) 4143--4172.}
\bibitem{AKR24} \small{J. An, J. Kim and P. Ryu, Sobolev-Lorentz spaces with an application to the inhomogeneous biharmonic NLS equation, \emph{Discrete Contin. Dyn. Syst. Ser. B}, \textbf{29} (2024) 3326--3345.}
\bibitem{AJK23} \small{J. An, R. Jang and J. Kim, Global existence and blow-up for the focusing inhomogeneous nonlinear Schr\"{o}dinger equation with inverse-square potential, \emph{Discrete Contin. Dyn. Syst. Ser. B}, \textbf{28} (2023) 1046--1067.}
\bibitem{AMK24} \small{J. An, H. Mun and J. Kim, On stability and instability of the ground states for the focusing inhomogeneous NLS with inverse-square potential, \emph{Pure Appl. Math. Q.}, (2024) to appear.}
\bibitem{AHI24} \small{A. H. Ardila, M. Hamano and M. Ikeda, Mass-energy threshold dynamics for the focusing NLS with a repulsive inverse-power potential, \emph{Evol. Equ. Control Theory}, (2024) Doi:10.3934/eect.2024033.}
\bibitem{BL23} \small{R. Bai and B. Li, Finite time/Infinite time blow-up behaviors for the inhomogeneous nonlinear Schr\"{o}dinger equation, \emph{Nonlinear Anal.}, \textbf{232} (2023) 113266.}
\bibitem{BS23} \small{R. Bai and T. Saanouni, Finite time blow-up of non-radial solutions for some inhomogeneous Schr\"{o}dinger equations, arXiv:2306.15210}
\bibitem{BPVT07} \small{J. Belmonte-Beitia, V. M. P\'{e}rez-Garc\'{i}a, V. Vekslerchik and P. J. Torres, Lie symmetries and solitons in nonlinear systems with spatially inhomogeneous nonlinearities, \emph{Phys. Rev. Lett.}, \textbf{98} (2007), 064102.}
\bibitem{BGS23} \small{S. Boulaaras, R. Ghanmi and T. Saanouni, Scattering threshold for a focusing inhomogenous non-linear Schr\"{o}dinger equation with  inverse-square potential, \emph{Bound. Value Probl.}, (2023), 2023(1687--2770).}
\bibitem{BPST03} \small{N. Burq, F. Planchon, J. Stalker and A. S. Tahvildar-Zadeh, Strichartz estimates for the wave and Schr\"{o}dinger equations with the inverse-square potential, \emph{J. Funct. Anal.}, \textbf{203} (2003), 519--549.}
\bibitem{CC21} \small{L. Campos and M. Cardoso, Blow up and scattering criteria above the threshold for the focusing inhomogeneous nonlinear Schr\"{o}dinger equation, \emph{Nonlinear Differ. Equ. Appl.}, \textbf{28} (2021), 69.}
\bibitem{CCF23} \small{L. Campos, M. Cardoso and L. G. Farah, Blow-up for the $3D$ intercritical inhomogeneous NLS with inverse-square potential, arXiv:2305.06971.}
\bibitem{CG21} \small{L. Campos and C. M. Guzm\'{a}n, On the inhomogeneous NLS with inverse-square potential, \emph{Z. Angew. Math. Phys.}, \textbf{72} (2021), 143.}
\bibitem{CF22} \small{M. Cardoso and G. Farah, Blow-up of non-radial solutions for the $L^2$ critical inhomogeneous NLS equation, \emph{Nonlinearity}, \textbf{35} (2022) 4426.}
\bibitem{C03} \small{T. Cazenave, \textit{Semilinear Schr\"{o}dinger Equations}, Courant Lecture Notes in Mathematics, New York University, Courant Institute of Mathematical Sciences, New York; American Mathematical Society, Providence, RI, 2003.}
\bibitem{CFH11} {\small T. Cazenave, D. Fang and Z. Han, Continuous dependence for NLS in fractional order spaces, \emph{Ann. Inst. H. Poincar\'{e} Anal. Non Lin\'{e}aire}, \textbf{28} (2011), 135--147.}
\bibitem{CN16} {\small D. Cruz-Uribe and V. Naibo, Kato-Ponce inequalities on weighted and variable Lebesgue spaces, \emph{Differential Integral Equations.}, \textbf{29} (2016) 801--836.}
\bibitem{DYC13} {\small W. Dai, W. Yang and D. Cao, Continuous dependence of Cauchy problem for nonlinear Schr\"{o}dinger equation in $H^{s} $, \emph{J. Differential Equations}, \textbf{255} (2013), 2018--2064.}
\bibitem{D18N} \small{V. D. Dinh, Blowup of $H^{1} $ solutions for a class of the focusing inhomogeneous nonlinear Schr\"{o}dinger equation, \emph{Nonlinear Anal.}, \textbf{174} (2018), 169--188.}
\bibitem{D18J} \small{V. D. Dinh, Global exsitence and blowup for a class of focusing nonlinear Schr\"{o}dinger equation with inverse-square potential, \emph{J. Math. Anal. Appl.}, \textbf{468} (2018), 270--303.}
\bibitem{D21M} \small{V. D. Dinh, Global dynamics for a class of inhomogeneous nonlinear Schr\"{o}dinger equations with potential. \emph{Math. Nachr.} \textbf{294} (2021), 672--716.}
\bibitem{D21A} \small{V. D. Dinh, On nonlinear Schr\"{o}dinger equations with repulsive inverse-power potentials, \emph{Acta App. Math.} \textbf{174} (2021) 14.}
\bibitem{DK21} \small{V. D. Dinh and S. Keraani, Long time dynamics of non-radial solutions to inhomogeneous nonlinear Schr\"{o}dinger equations, \emph{SIAM J. Math. Anal.}, \textbf{53} (2021), 4765--4811.}
\bibitem{F16} \small{L. G. Farah, Global well-posedness and blow-up on the energy space for the inhomogeneous nonlinear Schr\"{o}dinger equation, \emph{J. Evol. Equ.}, \textbf{16} (2016), 193--208.}
\bibitem{FO19} {N. Fukaya and M. Ohta, Strong instability of standing waves for nonlinear Schr\"{o}dinger equations with attractive inverse power potential, \emph{Osaka J. Math.}, \textbf{56} (2019), 713--726.}
\bibitem{G11} \small{F. Genoud, A uniqueness result for $\Delta u-\lambda u+V(|x|)u^p =0$ on $\mathbb R^2$, \emph{Adv. Nonlinear Stud.}, \textbf{11} (2011), 483--491.}
\bibitem{G12} \small{F. Genoud, An inhomogeneous, $L^{2} $-critical, nonlinear Schr\"{o}dinger equation, \emph{Z. Anal. Anwend.}, \textbf{31} (2012), 283--290.}
    \bibitem{GS08} \small{F. Genoud and C. A. Stuart, Schr\"{o}dinger equations with a spatially decaying nonlinearity: existence and stability of standing waves, \emph{Discrete Contin. Dyn. Syst.}, \textbf{21} (2008), 137--186.}
\bibitem{G77} \small{R. T. Glassey, On the blowing up of solutions to the Cauchy problem for nonlinear Schr\"{o}dinger equations, \emph{J. Math. Phys.}, \textbf{18} (1977), 1794--1797.}
\bibitem{G14} \small{L. Grafakos, \textit{Classical Fourier Analysis}, third ed., Springer, New York, 2014.}
\bibitem{GWY22} \small{Q. Guo, H. Wang and X. Yao, Dynamics of the focusing 3D cubic NLS with slowly decaying potential, \emph{J. Math. Anal. Appl.}, \textbf{56} (2022) 125653.}
\bibitem{G17} \small{C. M. Guzm\'{a}n, On well posedness for the inhomogeneous nonlinear Schr\"{o}dinger equation, \emph{Nonlinear Anal. Real World Appl.}, \textbf{37} (2017), 249--286.}
\bibitem{GM21} {\small C. M. Guzm\'{a}n and J. Murphy, Scattering for the non-radial energy-critical inhomogeneous NLS, \emph{J. Differential Equations}, \textbf{295} (2021), 187--210.}
\bibitem{JAK21} \small{R. Jang, J. An and J. Kim, The Cauchy problem for the energy-critical inhomogeneous nonlinear Schr\"{o}dinger equation with inverse-square potential, Preprint arXiv:2107.09826.}
\bibitem{KSWW75} \small{H. Kalf, U. W. Schmincke, J. Walter and R. Wust, \textit{On the spectral theory of Schr\"{o}dinger and Dirac operators with strongly singular potentials, in: Spectral Theory and Differential Equations}, in: Lect. Notes in Math., vol. 448, Springer, Berlin, 1975, pp. 182--226.}
\bibitem{KP04} \small{D. S. Kang and S. J. Peng, Positive solutions for elliptic equations with critical Sobolev-Hardy exponents, \emph{Appl. Math. Lett.}, \textbf{17} (2004), 411--416.}
\bibitem{KMVBT17} \small{Y. V. Kartashov, B. A. Malomed, V. A. Vysloukh, M. R. Belic and L. Torner, Rotating vortex clusters in media with inhomogeneous defocusing nonlinearity. \emph{Opt. Lett.}, \textbf{42} (2017), 446--449.}
\bibitem{KT98} \small{M. Keel and T. Tao, Endpoint Strichartz estimates, \emph{Amer. J. Math.}, \textbf{120} (1998), 955--980.}
\bibitem{KMVZZ18} \small{R. Killip, C. Miao, M. Visan, J. Zhang and J. Zheng. Sobolev spaces adapted to the Schr\"{o}dinger operator with inverse-square potential. \textit{Math. Z.}, \textbf{288} (2018) 1273--1298.}
\bibitem{HYZ12} \small{H. Hajaiej, X. Yu and Z. Zhai, Fractional Gagliardo-Nirenberg and Hardy inequalities under Lorentz norms, \emph{J. Math. Anal. Appl.}, \textbf{396} (2012) 569--577.}
\bibitem{LP15} \small{F. Linares and G. Ponce, \textit{Introduction to Nonlinear Dispersive Equations}, second edition, Universitext. Springer, New York, 2015.}
\bibitem{MZZ22} \small{C. Miao, J. Zhang and J. Zheng. Nonlinear Schr\"{o}dinger equation with coulomb potential, \emph{Acta Math. Sci. Ser. B (Engl. Ed.)}, \textbf{42} (2022) 2230--2256.}
\bibitem{M20} \small{H. Mizutani, Strichartz estimates for Schr\"{o}dinger equations with slowly decaying potential, \emph{J. Funct. Anal.}, \textbf{279} (2020) 108789.}
\bibitem{OSY12} \small{N. Okazawa, T. Suzuki and T. Yokota, Energy methods for abstract nonlinear Schr\"{o}dinger equations. \emph{Evol. Equ. Control Theory}, \textbf{1} (2012) 337--354.}
\bibitem{O63} \small{R. O'Neil, Convolution operators and $L(p,q)$ spaces, \emph{Duke Math. J.}, \textbf{30} (1963) 129--142.}
\bibitem{S16} \small{T. Suzuki. Solvability of nonlinear Schr\"{o}dinger equations with some critical singular potential via generalized Hardy-Rellich inequalities, \textit{Funkcial. Ekvac.}, \textbf{59} (2016), 1--34.}
\bibitem{T84} \small{J. F. Toland, Uniqueness of positive solutions of some semilinear Sturm-Liouville problems on the half line, \emph{Proc. Roy. Soc. Edinburgh Sect. A}, \textbf{97} (1984) 259--263.}
\bibitem{WHHG11} \small{B. X. Wang, Z. Huo, C. Hao and Z. Guo, \textit{Harmonic Analysis Method for Nonlinear Evolution Equations, I}, World Scientific Publishing Co. Pte. Ltd., Hackensack, NJ, 2011.}
\bibitem{W83} \small{M. I. Weinstein,  Nonlinear Schr\"{o}dinger equations and sharp interpolation estimates, \emph{Commun. Math. Phys.}, \textbf{87} (1983), 567--576.}
\bibitem{Y91} \small{E. Yanagida, Uniqueness of positive radial solutions of $\Delta u+g(r)u+h(r)u^p=0$ in $\mathbb R^n$, \emph{Arch. Rational Mech. Anal.}, \textbf{115} (1991) 257--274.}
\bibitem{Y21} \small{K. Yang, Scattering of the focusing energy-critical NLS with inverse-square potential in the radial case, \emph{Comm. Pure Appl. Anal.}, \textbf{20} (2021), 77--99.}
\end{thebibliography}
\end{document}